\theoremstyle{plain}
\newtheorem{thm}{Theorem}[section]
\newtheorem{cor}[thm]{Corollary}
\newtheorem{lem}[thm]{Lemma}
\newtheorem{prop}[thm]{Proposition}
\newtheorem*{propA}{Proposition A}
\theoremstyle{remark}
\author{
Chiun-Chuan Chen$^{1,2}$ and Li-Chang Hung$^{1}$\footnote{Corresponding author's email address: \texttt{lichang.hung@gmail.com}
         }
\vspace{10mm}
\\
 \small
 \textit{$^{1}$ Department of Mathematics, National Taiwan University, Taiwan}
\\
\small
\textit{$^{2}$ National Center for Theoretical Sciences, Taiwan}
 \small
}
\title{A maximum principle for diffusive Lotka-Volterra systems of two competing species
}
\date{
\small
Li-Chang Hung dedicates this work to Mach Nguyet Minh
}
\begin{document}
\maketitle

\begin{abstract}

Using an elementary approach, we establish a new maximum principle for the diffusive Lotka-Volterra system of two competing species, which involves
pointwise estimate of an elliptic equation consisting of the second derivative of one function, the first derivative of another function,
and a quadratic nonlinearity. This maximum principle gives a priori estimates for the total mass of the two species.
Moreover, applying it to the system of three competing species leads to a nonexistence theorem of traveling wave solutions.










\end{abstract}



\section{Introduction}
\vspace{2mm}

In this paper we study the following diffusive Lotka-Volterra system of two competing species:
\begin{equation}\label{eqn: compet L-V sys of 2 species with diffu}
\begin{cases}
u_t=d_1\,u_{yy}+u\,(\sigma_1-c_{11}\,u-c_{12}\,v), \quad y\in\mathbb{R},\quad t>0,\\ \\
\hspace{0.7mm}v_t=d_2\,v_{yy}+v\,(\sigma_2-c_{21}\,u-c_{22}\,v), \quad y\in\mathbb{R},\quad t>0,\\
\end{cases}
\end{equation}
which is a system frequently used to model competitive behaviour between two distinct species. Here $u(y,t)$ and $v(y,t)$ stand for the density of the two species $u$ and $v$, respectively; $d_i$, $\sigma_i$, $c_{ii}$ $(i=1,2)$, and $c_{ij}$ $(i,j=1,2  \ \text{with} \;i\neq j)$ are the respective diffusion rates, intrinsic growth rates, intra-specific competition rates, and inter-specific competition rates, all of which are assumed to be positive.
The problem as to which species will survive in a competitive system is of importance in ecology. In order to tackle this problem,
we consider traveling wave solutions, which are solutions of the form
\begin{equation}\label{eqn: traveling wave (u,v)=(U,V)}
(u(y,t),v(y,t))=(u(x),v(x)), \quad x=y-\theta \,t,
\end{equation}
where $\theta$ is the propagation speed of the traveling wave. In general, the sign of $\theta$ indicates which species is stronger and can survive.




We note that by using a suitable scaling, the two-species system \eqref{eqn: compet L-V sys of 2 species with diffu} can be rewritten as
\begin{equation}\label{eqn: compet L-V sys of 2 species with diffu scaled}
\begin{cases}
u_t=\;\;u_{yy}+\;\;u\,(1-u-a_1\,v), \quad y\in\mathbb{R},\quad t>0,\\ \\
v_t=d\,v_{yy}+k\,v\,(1-a_2\,u-v),  \quad y\in\mathbb{R},\quad t>0,\\
\end{cases}
\end{equation}
where $d$, $k$, $a_1$ and $a_2$ are positive parameters. It is readily seen that in general, \eqref{eqn: compet L-V sys of 2 species with diffu scaled} has four equilibria: $\textbf{e}_1=(0,0)$, $\textbf{e}_2=(1,0)$, $\textbf{e}_3=(0,1)$ and $\textbf{e}_4=(u^{\ast},v^{\ast})$, where $(u^{\ast},v^{\ast})=(\frac{1-a_1}{1-a_1\,a_2},\frac{1-a_2}{1-a_1\,a_2})$ is the intersection of the two straight lines $1-u-a_{1}\,v=0$ and $1-a_{2}\,u-v=0$, whenever it exists. We note that $u^{\ast},v^{\ast}>0$ if and only if $a_1,a_2<1$ or $a_1,a_2>1$. When the domain is bounded, the asymptotic behavior of solutions $(u(y,t),v(y,t))$ for \eqref{eqn: compet L-V sys of 2 species with diffu scaled} with initial conditions $u(y,0),v(y,0)>0$ can be classified into four cases, as described in:

\vspace{2mm}

\begin{propA}[\cite{demottoni79}]\label{prop: ODE stability of two-species systems}
Let $(u(y,t),v(y,t))$ be the solution of \eqref{eqn: compet L-V sys of 2 species with diffu scaled} with the entire space $\mathbb{R}$ replaced by a bounded domain in $\mathbb{R}$ under the zero Neumann boundary conditions. Then for initial conditions $u(x,0)$,$v(x,0)>0$, we have
\begin{itemize}
\item [(i)]   $a_1<1<a_2$ \hspace{1.0mm} \ \ \ \ \ $\Rightarrow$
                       $\lim\limits^{}_{t\rightarrow\infty}(u(y,t),v(y,t))=(1,0)$;
\item [(ii)]  $a_2<1<a_1$ \hspace{1.0mm} \ \ \ \ \ $\Rightarrow$
                       $\lim\limits^{}_{t\rightarrow\infty}(u(y,t),v(y,t))=(0,1)$;
\item [(iii)] $a_1>1,a_2>1$\hspace{5.5mm} $\Rightarrow$
                       $(1,0)$ and $(0,1)$ are locally stable equilibria;
\item [(iv)]  $a_1<1,a_2<1$\hspace{5.5mm} $\Rightarrow$
                       $\lim\limits^{}_{t\rightarrow\infty}(u(y,t),v(y,t))=(u^{\ast},v^{\ast})$.
                         \end{itemize}
\end{propA}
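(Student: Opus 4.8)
The plan is to reduce the reaction--diffusion system on the interval to its kinetic (ODE) counterpart, and then to obtain the four cases from a phase--plane analysis; since Proposition~A is classical and only its conclusions are used below, I will only indicate the main steps.

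First I would record a priori bounds. From $u(1-u-a_1v)\le u(1-u)$ and $v(1-a_2u-v)\le v(1-v)$, the parabolic comparison principle against the spatially homogeneous logistic flow gives $\limsup_{t\to\infty}\sup_y u(y,t)\le 1$ and $\limsup_{t\to\infty}\sup_y v(y,t)\le 1$; together with the strong maximum principle ($u,v>0$ for $t>0$) this shows that a rectangle $[0,1+\varepsilon]^2$ is eventually absorbing. The next --- and, I expect, the genuinely hard --- step is to prove that every trajectory becomes asymptotically spatially homogeneous, so that its $\omega$-limit set consists of constant equilibria. On an interval (a convex domain) under Neumann data this can be carried out via the theory of monotone semiflows: the system is competitive, hence order preserving for $(u_1,v_1)\preceq(u_2,v_2)\iff u_1\le u_2,\ v_1\ge v_2$, and the competition--diffusion system admits no stable nonconstant steady state under Neumann conditions; alternatively one squeezes $(u,v)$ between sub/supersolutions built from the ODE, or (in case~(iv) below) uses the spatial integral of a Lyapunov function. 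I regard establishing this homogenization as the main obstacle; granting it, the rest is elementary.

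It then remains to analyze the kinetic system $\dot u=u(1-u-a_1v)$, $\dot v=kv(1-a_2u-v)$ in the closed positive quadrant. I would (a) exclude periodic orbits (and homoclinic loops) by Dulac's criterion with multiplier $B=1/(uv)$, since $\partial_u\big(\dot u/(uv)\big)+\partial_v\big(\dot v/(uv)\big)=-1/v-k/u<0$, and (b) linearize at the equilibria. One finds $\mathbf e_1$ always a source; $\mathbf e_2=(1,0)$ with eigenvalues $-1$ and $k(1-a_2)$, hence asymptotically stable iff $a_2>1$; symmetrically $\mathbf e_3=(0,1)$ with eigenvalues $1-a_1$ and $-k$, hence asymptotically stable iff $a_1>1$; and, when $\mathbf e_4=(u^\ast,v^\ast)$ lies in the open quadrant, a Jacobian with rows $(-u^\ast,-a_1u^\ast)$ and $(-ka_2v^\ast,-kv^\ast)$, of negative trace and determinant $ku^\ast v^\ast(1-a_1a_2)$, so that $\mathbf e_4$ is asymptotically stable when $a_1a_2<1$ (case~(iv)) and a saddle when $a_1a_2>1$ (case~(iii)). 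Combining (a)--(b) with boundedness, the Poincar\'e--Bendixson theorem forces each interior orbit to converge to an equilibrium; in case~(i) the only sink is $\mathbf e_2$, $\mathbf e_1$ is a source, the stable manifold of the saddle $\mathbf e_3$ is the positive $v$-axis, and $\mathbf e_4$ is not in the positive quadrant, so (interior orbits staying interior) every interior orbit converges to $(1,0)$; case~(ii) is symmetric; in case~(iv) $\mathbf e_4$ is the unique interior equilibrium and the axes are repelling, giving convergence to $\mathbf e_4$. In case~(iii) the linearization already yields the asserted local stability of $\mathbf e_2$ and $\mathbf e_3$, their basins being separated by the stable manifold of the saddle $\mathbf e_4$.

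For case~(iv) I would in addition exhibit an explicit Lyapunov function, which reproves global stability and is what I would feed into the homogenization step: with $D=\operatorname{diag}(a_2,a_1/k)$, put $V(u,v)=a_2\big(u-u^\ast-u^\ast\ln(u/u^\ast)\big)+(a_1/k)\big(v-v^\ast-v^\ast\ln(v/v^\ast)\big)$, which is positive with strict minimum at $\mathbf e_4$. Writing $z=(u-u^\ast,v-v^\ast)$ one computes $\dot V=-z^{\mathsf T}(DA)z$, where $DA$ has rows $(a_2,a_1a_2)$ and $(a_1a_2,a_1)$; its leading principal minors are $a_2>0$ and $a_1a_2(1-a_1a_2)>0$, so $\dot V$ is negative definite precisely because $a_1a_2<1$, and LaSalle's invariance principle gives $(u,v)\to\mathbf e_4$. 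The functional $t\mapsto\int_\Omega V\big(u(y,t),v(y,t)\big)\,dy$ then serves as a Lyapunov functional for the Neumann problem in that case.
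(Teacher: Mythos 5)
The paper offers no proof of Proposition~A at all: it is quoted as a known result of de~Mottoni \cite{demottoni79}, so there is nothing internal to compare your argument with, and your proposal has to stand on its own as a reconstruction of the classical proof. Most of it does: the a priori bounds by comparison with the logistic flow, the Dulac computation with $B=1/(uv)$, the linearizations at $\mathbf e_1,\dots,\mathbf e_4$ (eigenvalues and the determinant $ku^\ast v^\ast(1-a_1a_2)$ are all correct), the Poincar\'e--Bendixson argument for the kinetic system, and the Goh-type Lyapunov function for case (iv) are exactly the standard ingredients. Note also that case (iii) claims only \emph{local} stability of $(1,0)$ and $(0,1)$ for the PDE, and this needs none of the machinery: under Neumann conditions the linearized eigenvalues are the ODE Jacobian eigenvalues shifted by $-\mu_n\le 0$ (with $\mu_n$ the Neumann eigenvalues of $-\partial_{yy}$, resp.\ $-d\,\partial_{yy}$), so stability is immediate from your $2\times 2$ computations.

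The genuine gap is the step you yourself flag as the main obstacle, and as stated it does not deliver the conclusion you need. ``Strong monotonicity (for the competitive order) plus nonexistence of stable nonconstant steady states on a convex domain'' yields, via Hirsch/Kishimoto--Weinberger-type results, convergence to equilibria only for \emph{generic} (or quasi-convergence for) initial data; it does not by itself show that \emph{every} positive solution homogenizes, nor does it exclude that some $\omega$-limit set contains an unstable nonconstant equilibrium, which is what you would need to reduce statements (i), (ii), (iv) for all positive data to the phase-plane picture. The way to close this is precisely the alternatives you mention in passing, which should be promoted to the main argument: in cases (i) and (ii) run the contracting-rectangle/comparison iteration with spatially homogeneous sub- and supersolutions (e.g.\ $\limsup_y v\le 1$ forces, in case (i), $\liminf_y u\ge$ the positive root of $1-u-a_1(1+\varepsilon)$, which in turn improves the bound on $v$, and the iteration converges to $(1,0)$); in case (iv) first obtain, by the same comparison argument and $a_1,a_2<1$, eventual positive lower bounds on $u$ and $v$ (needed so the logarithmic terms in $V$ stay finite), and then apply LaSalle to $\int_\Omega V\,dy$, whose time derivative picks up the additional nonpositive terms $-\int_\Omega a_2 u^\ast u_y^2/u^2\,dy-\int_\Omega (a_1 d/k)\, v^\ast v_y^2/v^2\,dy$ under Neumann conditions. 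With the argument reorganized this way the homogenization claim is never needed, and the proof is complete; as you wrote it, the central reduction is asserted rather than proved.
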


\vspace{2mm}

In this paper, we consider the following traveling wave problems, which are obtained by substituting \eqref{eqn: traveling wave (u,v)=(U,V)}
into \eqref{eqn: compet L-V sys of 2 species with diffu scaled} and into \eqref{eqn: compet L-V sys of 2 species with diffu} respectively,
\begin{equation}\label{eqn: L-V BVP scaled}
\begin{cases}
\vspace{3mm}
\hspace{2.0mm} u_{xx}+\theta\,u_{x}+\;\;u\,(1-u-a_1\,v)=0, \quad x\in\mathbb{R}, \\
\vspace{3mm}
d\,v_{xx}+\theta\,v_{x}+k\,v\,(1-a_2\,u-v)=0, \quad x\in\mathbb{R},\\
(u,v)(-\infty)=\text{\bf e}_2,\quad (u,v)(+\infty)= \text{\bf e}_3,
\end{cases}
\end{equation}
and
\begin{equation}\label{eqn: L-V BVP  before scaled}
\begin{cases}
\vspace{3mm}
d_1\,u_{xx}+\theta\,u_{x}+u\,(\sigma_1-c_{11}\,u-c_{12}\,v)=0, \quad x\in\mathbb{R}, \\
\vspace{3mm}
d_2\,v_{xx}\hspace{0.8mm}+\theta\,v_{x}+v\,(\sigma_2-c_{21}\,u-c_{22}\,v)=0,\quad x\in\mathbb{R}, \\
(u,v)(-\infty)=\big(\frac{\displaystyle\sigma_1}{\displaystyle c_{11}},0\big),\quad
(u,v)(+\infty)=\big(0,\frac{\displaystyle\sigma_2}{\displaystyle c_{22}}\big).
\end{cases}
\end{equation}
We call a solution $(u(x),v(x))$ of \eqref{eqn: L-V BVP scaled} an $(\textbf{e}_2,\textbf{e}_3)$-wave. The typical situation
\begin{equation}
\lim_{x\to-\infty}(u,v)(x)=(1,0), \; \lim_{x\to\infty}(u,v)(x)=(0,1),
\end{equation}
i.e. $u$ is dominant on the left region and $v$ is dominant on the right region of $\mathbb R$, motivates us to study the
$(\textbf{e}_2,\textbf{e}_3)$-wave. In this situation, $u$ will occupy the whole domain  eventually if $\theta>0$
while  $v$ will occupy the whole domain  eventually if $\theta<0$. From the viewpoint of ecology, we can conclude
that the sign of $\theta$ determines which species is stronger, i.e. $u$ is stronger if $\theta>0$ and $v$
is stronger if $\theta<0$.

Much attention has been paid to the $(\textbf{e}_2,\textbf{e}_3)$-wave. For cases $(\textit{i})$ or $(\textit{iii})$
in Proposition A,
Kan-on (\cite{Kan-on95},\cite{Kan-on97Fisher-Monostable}),
Fei and Carr (\cite{Fei&Carr03}), Leung, Hou and Li (\cite{Leung08}), and Leung and Feng (\cite{Leung&Feng11}) established
the existence of $(\textbf{e}_2,\textbf{e}_3)$-waves employing different approaches. Under certain assumptions on the parameters,
Mimura and Rodrigo (\cite{Rodrigo&Mimura00,Rodrigo&Mimura01}) constructed exact $(\textbf{e}_2,\textbf{e}_3)$-waves by applying
a judicious ans\"{a}tz for solutions. By applying the hyperbolic tangent method, Hung(\cite{hung2012JJIAM}) found exact
$(\textbf{e}_2,\textbf{e}_3)$-waves under certain assumptions on the parameters.  All the exact $(\textbf{e}_2,\textbf{e}_3)$-waves
proposed by Mimura and Rodrigo (\cite{Rodrigo&Mimura00,Rodrigo&Mimura01}) and Hung(\cite{hung2012JJIAM}) are represented in terms of
polynomials in hyperbolic tangent functions. Throughout this paper, we restrict our attention to the \textit{bistable case},
i.e. case $(\textit{iii})$ $a_1>1,a_2>1$ in Proposition A.

To understand the ecological capacity of the inhabitant of the two competing species, the investigation of the total mass or the total density
of the two species $u$ and $v$ is essential since the inhabitant is resource-limited. This gives rises to the problem as to the estimate
of $u+v$ in \eqref{eqn: L-V BVP  before scaled}. In \cite{Chen&Hung15Nonexistence}, upper and lower bounds of $u+v$ are given
when the two diffusion rates $d_1$ and $d_2$ are equal. However, the approach employed in \cite{Chen&Hung15Nonexistence} to obtain upper or lower bounds
for $u+v$ cannot be applied to the case where the diffusion rates $d_1$ and $d_2$ are not equal.

The above discussion raises the following questions:
\vspace{2mm}

\textbf{Q1}: \textit{In \eqref{eqn: L-V BVP  before scaled}, when $d_1\neq d_2$, can upper and lower bounds of $u+v$ be obtained?}

\vspace{2mm}

As for the answer to \textbf{Q1}, it seems as far as we know, not available in the literature. To give an affirmative answer to this question, we develop a new but elementary approach. In fact, employing this approach leads to an affirmative answer to the following more general question:

\vspace{2mm}

\textbf{Q2}: \textit{In \eqref{eqn: L-V BVP  before scaled}, when $d_1\neq d_2$, can upper and lower bounds of $\tau_1\,u+\tau_2\,v$, where $\tau_1,\tau_2>0$ are arbitrary constants, be given?}

\vspace{2mm}

Since the physical units of $u$ and $v$ may not be identical, it makes sense to consider $\tau_1\,u+\tau_2\,v$ for the total mass in general. Although we can estimate $\tau_1\,u+\tau_2\,v$ via
\begin{equation}
 \min[\tau_1,\tau_2]\,(u+v)\leq \tau_1\,u+\tau_2\,v \leq \max[\tau_1,\tau_2]\,(u+v)
\end{equation}
once $u+v$ is measured, more information will be wasted in this manner of estimation as the difference of $\tau_1$ and $\tau_2$ becomes larger. Consequently, an approach which can accommodate to various $\tau_1$ and $\tau_2$ will be of great interest.


By adding the two equations in \eqref{eqn: L-V BVP  before scaled}, we obtain an equation involving $p(x)=\alpha\,u+\beta\,v$ and $q(x)=d_1\,\alpha\,u+d_2\,\beta\,v$
\begin{align}\label{eq: q''+p'+f+g=0 before scale}
  0&=\alpha\,\big(d_1\,u_{xx}+\theta\,u_{x}+u\,(\sigma_1-c_{11}\,u-c_{12}\,v)\big)+
      \beta\,\big(d_2\,v_{xx}+\theta\,v_{x}+v\,(\sigma_2-c_{21}\,u-c_{22}\,v)\big)\notag\\[3mm]
  &=q''(x)+\theta\,p'(x)+
  \alpha\,u\,(\sigma_1-c_{11}\,u-c_{12}\,v)+
    \beta\,v\,(\sigma_2-c_{21}\,u-c_{22}\,v).\notag\\
\end{align}
The case where $d_1=d_2$ or $p(x)$ is a constant multiple of $q(x)$ has been considered in \cite{Chen&Hung15Nonexistence}.
Obviously, difficulties arise, and the approach used in \cite{Chen&Hung15Nonexistence} cannot be applied
when $d_1\neq d_2$ since $p(x)$ no longer can be written as a constant multiple of $q(x)$.
The approach proposed here can be employed to give estimates of $q(x)$ even distinct $p(x)$ and $q(x)$ (i.e. $d_1\neq d_2$) are involved
in the \textit{scalar} equation \eqref{eq: q''+p'+f+g=0 before scale}.

To simplify the problem, we consider \eqref{eqn: L-V BVP scaled} first and present the results for \eqref{eqn: L-V BVP  before scaled}
in Section~\ref{sec: appendix}. One of the main results in this paper is the following maximum principle for Lotka-Volterra systems
of two strongly competing species.



\vspace{2mm}






\begin{thm}[\textbf{Maximum Principle for $q(x)$}]\label{thm: lb<q(x)<ub}
Suppose that $a_1>1,a_2>1$ and $(u(x),v(x))$ is a nonnegative solution to \eqref{eqn: L-V BVP scaled}. Then
\begin{equation}
\min\Big[\frac{\alpha}{a_2\,d},\frac{\beta}{a_1}\Big]\min[1,d^2]\leq q(x)\leq \max\Big[\frac{\alpha}{d},\beta\Big]\max[1,d^2] \ \text{ for }  x\in\mathbb{R},
\end{equation}
where $q(x)=\alpha\,u(x)+d\,\beta\,v(x)$ and $\alpha$, $\beta$ are arbitrary positive constants.
\end{thm}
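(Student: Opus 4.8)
The plan is to derive, from the two-species traveling wave system, a single scalar elliptic inequality satisfied by $q(x)=\alpha\,u+d\,\beta\,v$ and then apply a comparison/maximum-principle argument at interior extrema. First I would add suitable multiples of the two equations in \eqref{eqn: L-V BVP scaled} to produce an equation of the form $q''(x)+\theta\,p'(x)+N(u,v)=0$, where $p(x)=\alpha\,u+\beta\,v$ and $N(u,v)=\alpha\,u(1-u-a_1v)+k\,\beta\,v(1-a_2u-v)$ is the quadratic nonlinearity; the point of the weights $\alpha$ and $d\beta$ in $q$ (as opposed to the weights $\alpha,\beta$ in $p$) is exactly that $q$ collects the second-derivative terms with a common coefficient, so no extra $d$-dependence contaminates the $q''$ term. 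Because $\theta\,p'$ is a genuinely different first-order term (here is where $d_1\neq d_2$, i.e. $d\neq1$, bites), the usual trick of writing everything in terms of one function fails, and one must keep both $p$ and $q$ in play; the key observation to exploit is that $p$ and $q$ are comparable: since $u,v\geq0$ one has $\min[1,d]\,p\leq q\leq\max[1,d]\,p$ pointwise, so a bound on $q$ transfers to a bound on $p$ and vice versa, which is what ultimately lets the scalar inequality close.

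The main analytic step is to bound the nonlinearity $N(u,v)$ from above and below in terms of $q$ (and $p$) alone. Using $a_1,a_2>1$ and nonnegativity, I expect to show that when $q(x)$ is large — say $q(x)>\max[\alpha/d,\beta]\max[1,d^2]$ — the sign of $N(u,v)$ is forced: each of $\alpha u(1-u-a_1v)$ and $k\beta v(1-a_2u-v)$ becomes negative once $u$ or $v$ is correspondingly large, because $a_1,a_2>1$ makes the interspecific terms dominate, and the weight mismatch between $p$ and $q$ is absorbed by the factors $\max[1,d^2]$ and $\min[1,d^2]$ coming from the two successive comparisons $q\leftrightarrow p$ and $p\leftrightarrow(u,v)$. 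Concretely, I would establish implications of the shape ``$q(x_0)$ above the claimed upper bound $\Rightarrow$ $u(x_0)>1$ or $v(x_0)>1$ $\Rightarrow$ $N(u,v)(x_0)<0$,'' and symmetrically ``$q(x_0)$ below the claimed lower bound $\Rightarrow$ $N(u,v)(x_0)>0$.'' Pinning down the sharp constants $\min[\alpha/(a_2d),\beta/a_1]\min[1,d^2]$ and $\max[\alpha/d,\beta]\max[1,d^2]$ is the delicate bookkeeping: one tracks, through the two comparison inequalities, which of $u$ or $v$ is the ``binding'' species and which of $d\lessgtr1$ is the relevant regime, and the min/max over the two cases produces exactly these expressions.

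Once the sign control on $N$ is in hand, the maximum-principle argument is standard but needs the first-order term handled carefully. Suppose $q$ attains an interior maximum at $x_0$ with $q(x_0)$ exceeding the upper bound; then $q''(x_0)\leq0$ and $q'(x_0)=0$. The relation $q''+\theta p'+N=0$ gives $q''(x_0)=-\theta\,p'(x_0)-N(x_0)$, and I need to argue $p'(x_0)=0$ as well — this follows because at $x_0$ the function $q=\alpha u+d\beta v$ has a critical point, and combined with information at that point (or by noting that any interior max of $q$ over all of $\mathbb{R}$, given the boundary limits, can be taken where both $u'$ and $v'$ vanish, since $p$ and $q$ are independent positive combinations of $u,v$, so $q'(x_0)=p'(x_0)=0$ forces $u'(x_0)=v'(x_0)=0$). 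Hence $q''(x_0)=-N(x_0)>0$, contradicting $q''(x_0)\leq0$. If instead the supremum is approached only at $\pm\infty$, the boundary values $\text{\bf e}_2=(1,0)$ and $\text{\bf e}_3=(0,1)$ give $q(\pm\infty)\in\{\alpha,d\beta\}$, which already lie within the claimed interval, so no violation occurs there either. The lower bound is symmetric, using an interior minimum and the reversed sign of $N$. The hard part will be the constant-chasing in the second paragraph — making the two nested comparisons $q\leftrightarrow p\leftrightarrow(u,v)$ yield precisely the stated $\min[1,d^2]$/$\max[1,d^2]$ factors, and verifying the sign of $N$ in every sub-case of $d\lessgtr1$ and of which species dominates.
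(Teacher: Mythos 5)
Your reduction to the scalar relation $q''+\theta\,p'+F(u,v)=0$ matches the paper, and your observation that the weights in $q=\alpha u+d\beta v$ are chosen to absorb the diffusion coefficients is correct. The genuine gap is in your maximum-principle step: at an interior maximum (or minimum) $x_0$ of $q$ you only know $q'(x_0)=\alpha u'(x_0)+d\beta v'(x_0)=0$, which is a single linear relation between $u'(x_0)$ and $v'(x_0)$; it does \emph{not} force $u'(x_0)=v'(x_0)=0$, and hence does not give $p'(x_0)=0$ when $d\neq 1$ (indeed $p'(x_0)=\beta(1-d)v'(x_0)$ there, which is generically nonzero). Your parenthetical justification assumes $p'(x_0)=0$ in order to conclude $u'(x_0)=v'(x_0)=0$, which is circular. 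Without control of the sign of $\theta\,p'(x_0)$, the identity $q''(x_0)=-\theta\,p'(x_0)-F(x_0)$ yields no contradiction, so the pointwise argument collapses precisely in the case the theorem is designed for, $d\neq1$ (for $d=1$ one has $p\equiv q$ and the pointwise argument does work, as the paper notes).

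The paper's proof avoids this by an integrated, not pointwise, argument (the ``N-barrier''): one nests three lines $\alpha u+d\beta v=\lambda_1$, $\alpha u+\beta v=\eta$, $\alpha u+d\beta v=\lambda_2$ inside the region where $F$ has a fixed sign, assumes $q$ violates the bound at its global extremum $z$, and integrates $q''+\theta p'+F\leq 0$ (resp.\ $\geq0$) from $z$ to the first crossing of the outer line, choosing to integrate toward $-\infty$ if $\theta\leq0$ and toward $+\infty$ if $\theta\geq0$. The middle line $\alpha u+\beta v=\eta$ separates $p(z)$ from $p$ at the crossing point, so the term $\theta\,(p(z)-p(z_1))$ (or $\theta\,(p(z_2)-p(z))$) has a favorable sign, the boundary derivative terms have the right signs by first-crossing/extremum considerations, and the integral of $F$ has a strict sign because the trajectory segment stays inside the barrier region; together these contradict the integrated inequality. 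The nesting requirement of the three lines is also exactly what produces the factors $\min[1,d^2]$ and $\max[1,d^2]$ and the case analysis in $d\gtrless1$ and in which species is binding, i.e.\ the ``constant-chasing'' you deferred. To repair your proposal you would need to replace the pointwise evaluation at $x_0$ by such an integral argument (or otherwise control $\theta\,p'$), since no pointwise sign information on $p'$ is available at critical points of $q$.
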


\vspace{2mm}

\noindent In particular, we notice that the estimate of $q$ in Theorem~\ref{thm: lb<q(x)<ub} does not depend on the propagating speed $\theta$ and
the constant $k$.

\vspace{2mm}

The maximum principle in Theorem~\ref{thm: lb<q(x)<ub} can be generalized to hold true for a wider class of autonomous elliptic systems:
\begin{equation}\label{eqn: L-V BVP  before scaled-G}
\begin{cases}
\vspace{3mm}
d_1\,u_{xx}+\theta\,u_{x}+u^m\,f(u,v)=0, \quad x\in\mathbb{R}, \\
\vspace{3mm}
d_2\,v_{xx}\hspace{0.8mm}+\theta\,v_{x}+v^n\,g(u,v)=0,\quad x\in\mathbb{R}, \\
(u,v)(-\infty)=\text{\bf e}_{-},\quad (u,v)(+\infty)= \text{\bf e}_{+},
\end{cases}
\end{equation}
where $m\ge 0$, $n\ge 0$ and
$$
\text{\bf e}_{-}, \text{\bf e}_{+}\in \mathcal{C}_{f,g}=\{ (u,v) \;|\; u^mf(u,v)=0, v^ng(u,v)=0, u,v\ge 0\}.
$$
We assume that $f(u,v)\in C^{0,\tau}(\mathbb{R^{+}}\times\mathbb{R^{+}})$ and $g(u,v)\in C^{0,\tau}(\mathbb{R^{+}}\times\mathbb{R^{+}})$ for some $\tau>0$,
and the following property holds:

\begin{itemize}
\item [$\mathbf{[A]}$] There exist $\bar{u}>\underaccent\bar{u}>0$ and $\bar{v}>\underaccent\bar{v}>0$ such that
\begin{eqnarray*}
f(u,v)\le 0 \text{ and } g(u,v)\le 0 &\text{ if } (u,v)\in \bar{\mathcal{R}}=\big\{ (u,v)\;\big|\; \frac{u}{\bar{u}}+\frac{v}{\bar{v}}\ge 1,\; u,v\geq0 \big\};\\
f(u,v)\ge 0 \text{ and } g(u,v)\ge 0 &\text{ if } (u,v)\in\underaccent\bar{\mathcal{R}}=\big\{ (u,v)\;\big|\; \frac{u}{\underaccent\bar{u}}+\frac{v}{\underaccent\bar{v}}\le1,\;u,v\geq0\big\}.
\end{eqnarray*}
\end{itemize}

\noindent We have the following theorem.
\vspace{3mm}

\begin{thm}[\textbf{Generalized Maximum Principle}]\label{thm: N-Barrier Maximum Principle for 2 Species}
Assume that $\mathbf{[A]}$ holds. If $a>0$, $b>0$, and $(u(x),v(x))$ is a nonnegative solution to \eqref{eqn: L-V BVP  before scaled-G}
with $\text{\bf e}_{-}\neq (0,0)$ and $\text{\bf e}_{+}\neq (0,0)$, then
\begin{equation}\label{eqn: upper and lower bounds of p}
\min\big(a\,\underaccent\bar{u},b\,\underaccent\bar{v}\big)\,\frac{\displaystyle\min(d_1,d_2)}{\displaystyle\max(d_1,d_2)}
\leq a\, u(x)+b\, v(x) \leq
\max\big(a\,\bar{u},b\,\bar{v}\big)\,\frac{\max(d_1,d_2)}{\min(d_1,d_2)}
\end{equation}
for $x\in\mathbb R$.

\end{thm}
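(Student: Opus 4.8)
My strategy is to establish a "barrier" at both the upper and lower levels for the quantity $P(x) := a\,u(x) + b\,v(x)$, using the structure imposed by hypothesis $\mathbf{[A]}$ together with the maximum principle applied to the auxiliary function $Q(x) := d_1\,a\,u(x) + d_2\,b\,v(x)$. The reason to work with $Q$ rather than $P$ is that, adding $a$ times the first equation in \eqref{eqn: L-V BVP  before scaled-G} to $b$ times the second, we obtain the clean scalar identity
\begin{equation}\label{eqn: scalar identity for Q}
Q''(x) + \theta\,P'(x) + a\,u^m f(u,v) + b\,v^n g(u,v) = 0, \quad x\in\mathbb R,
\end{equation}
in which the second-order term involves only $Q$; this is exactly the form to which a maximum-principle argument applies, even though $P$ and $Q$ are now genuinely different functions when $d_1 \neq d_2$. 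Note also the elementary two-sided comparison $\min(d_1,d_2)\,P(x) \le Q(x) \le \max(d_1,d_2)\,P(x)$, which will let me convert any bound on $Q$ into the desired bound on $P$ with the stated factor $\max(d_1,d_2)/\min(d_1,d_2)$.

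For the \emph{upper bound}, set $M := \max(a\,\bar u, b\,\bar v)$ and argue by contradiction: suppose $Q(x_0) > M\max(d_1,d_2)$ at some $x_0$. First I would rule out the possibility that $Q$ attains a positive interior maximum exceeding this level: at such a point $x_*$ one has $Q''(x_*)\le 0$ and $P'(x_*)$ related to $Q'(x_*)$; the crux is to show the sign of the nonlinear terms. The key geometric observation is that $Q(x) > M\max(d_1,d_2)$ forces $\min(d_1,d_2)\,P(x) > M\max(d_1,d_2)$, hence $P(x) > M\,\max(d_1,d_2)/\min(d_1,d_2) \ge M = \max(a\bar u, b\bar v)$, which in turn forces $(u(x),v(x)) \in \bar{\mathcal R}$ — indeed $u/\bar u + v/\bar v \ge (a u + b v)/\max(a\bar u, b\bar v) = P/M > 1$ — so that $f(u,v)\le 0$ and $g(u,v)\le 0$ there, making the nonlinear sum in \eqref{eqn: scalar identity for Q} nonnegative, i.e. $Q'' + \theta P' \le 0$ on the super-level set. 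One must then combine this with the boundary data: since $\text{\bf e}_\pm \in \mathcal C_{f,g}$ and $\text{\bf e}_\pm \neq (0,0)$, a separate argument (using that each component of $\text{\bf e}_\pm$ lies on the appropriate nullcline, which by $\mathbf{[A]}$ cannot exceed $\bar u$ resp. $\bar v$) shows $P(\pm\infty) \le M$, so $Q(\pm\infty) \le M\max(d_1,d_2)$; then a comparison/sliding argument rules out an interior value above the level. The lower bound is entirely symmetric, using $\underaccent\bar{\mathcal R}$, the reversed inequalities $f,g\ge 0$ there, and $m := \min(a\underaccent\bar u, b\underaccent\bar v)$.

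The main obstacle, I expect, is the maximum-principle step itself: equation \eqref{eqn: scalar identity for Q} is not a standard elliptic inequality for a single function because the first-derivative term carries $P'$, not $Q'$, and $P \neq Q$ when $d_1\neq d_2$. The honest way to handle this is the "N-barrier" device hinted at by the theorem's name: rather than looking at a pointwise extremum of $Q$, one integrates \eqref{eqn: scalar identity for Q} over a well-chosen interval $[x_1,x_2]$ on which $(u,v)$ stays in $\bar{\mathcal R}$ (taken maximal with this property), so that $\int_{x_1}^{x_2}(a u^m f + b v^n g)\,dx \le 0$; the identity then yields $Q'(x_2) - Q'(x_1) + \theta(P(x_2)-P(x_1)) \ge 0$, and one extracts a contradiction from the boundary behavior of $Q'$ and $P$ at the endpoints of the maximal interval (where $P$ equals the threshold $M$, pinned from below on the interval and from above just outside). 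Making the endpoint analysis rigorous — in particular justifying the existence of the maximal interval, handling the cases $x_1 = -\infty$ or $x_2 = +\infty$, and controlling $Q'$ there using decay of solutions toward $\text{\bf e}_\pm$ — is where the real work lies; the sign bookkeeping from $\mathbf{[A]}$ and the $d_1/d_2$ comparison are routine by contrast.
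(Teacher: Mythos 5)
Your overall frame is the paper's: add $a$ times the first equation to $b$ times the second to get the scalar identity $Q''+\theta P'+a\,u^m f+b\,v^n g=0$ with $Q=d_1 a u+d_2 b v$, $P=a u+b v$, use $\mathbf{[A]}$ to sign the nonlinearity, and pay the factor $\max(d_1,d_2)/\min(d_1,d_2)$ when converting between $P$ and $Q$ (the paper instead takes $\alpha=a/d_1$, $\beta=b/d_2$ so that the quantity estimated by the barrier is $a u+b v$ itself, but that is only a relabeling). Two slips in your bookkeeping are harmless but should be fixed: from $Q>M\max(d_1,d_2)$ you get $P>M$ via $Q\le \max(d_1,d_2)\,P$, not via the $\min$ inequality you invoke; and on $\bar{\mathcal{R}}$ the nonlinear sum is nonpositive, so the identity gives $Q''+\theta P'\ge 0$ there (you state it correctly later).

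The genuine gap is exactly in the step you defer as ``the real work'': the mechanism you sketch cannot yield a contradiction. The endpoints of the maximal interval on which $(u,v)\in\bar{\mathcal{R}}$ lie on the line $\frac{u}{\bar u}+\frac{v}{\bar v}=1$, where $P$ is not pinned to $M$; if instead you take the maximal interval on which $P\ge M$, then $P(x_1)=P(x_2)=M$, the $\theta$-term vanishes, and integrating the identity gives only $Q'(x_2)\ge Q'(x_1)$ --- the assumed violation of the bound never enters, and at those endpoints you control the sign of $P'$, not of $Q'$. The paper's N-barrier is a three-level structure, and all three levels are needed: two parallel levels of $Q$ with a level of $P$ sandwiched strictly between them, the whole configuration inside the region where the nonlinearity has one sign ($\underaccent\bar{\mathcal{R}}$ for the lower bound, $\bar{\mathcal{R}}$ for the upper). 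Assuming the bound fails, one works at a global interior extremum $z$ of $Q$ (it exists because the limits at $\pm\infty$ lie on the good side of the barrier; this is where the hypothesis that the limiting states are not $(0,0)$ enters for the lower bound), takes $z_1,z_2$ to be the first crossings of the outer $Q$-level to the left and right of $z$, and integrates the identity from $z_1$ to $z$ when $\theta\le 0$ and from $z$ to $z_2$ when $\theta\ge 0$. Then $Q'(z)=0$, $Q'$ at the first crossing has a definite sign, the sandwiched $P$-level separates $z$ from $z_1,z_2$ so that $\theta\,(P(z)-P(z_i))$ has a favorable sign, and the sign of $\int F$ closes the contradiction. Without the intermediate $P$-level and the case split on the sign of $\theta$ you cannot simultaneously sign the endpoint $Q'$ terms and the $\theta P$ term --- which is precisely the obstruction caused by $P\neq Q$ --- and your version, by allowing $x_1=-\infty$ or $x_2=+\infty$, additionally requires decay of $Q'$ at infinity that the finite barrier construction never needs.
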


\vspace{2mm}

Using the properties of the nonlinear terms of \eqref{eqn: L-V BVP scaled} more delicately, one can obtain better but complicated estimates
for $u+v$. In the following, we just state an improved result for $d=k=1$ since the form of the lower bound obtained is simple in this case.
More general results are described in Section \ref{sec: tangent lines}.

\begin{thm}\label{thm: sharper lower bound for u+v}
Suppose $d=k=1$, $a_1>1,a_2>1$, and $(u(x),v(x))$ is a nonnegative solution to \eqref{eqn: L-V BVP scaled}. Then for $x\in\mathbb{R}$
\begin{equation}\label{eqn: delicate}
\frac{4}{a_1+a_2+2}\leq u(x)+v(x)\leq 1.
\end{equation}
\end{thm}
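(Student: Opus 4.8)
The plan is to pass from the system to a single scalar relation for the total mass $s(x):=u(x)+v(x)$ and then to run an elementary maximum-principle argument at the extrema of $s$. These extrema are attained at interior points because the boundary conditions force $s(-\infty)=s(+\infty)=1$: since $s$ is continuous and tends to $1$ at $\pm\infty$, any value of $s$ lying outside the claimed interval $[\,4/(a_1+a_2+2),\,1\,]$ (note $4/(a_1+a_2+2)<1$ precisely because $a_1+a_2>2$) is attained at a global extremum lying in a compact interval, hence in the interior of $\mathbb R$, where the one-variable second-derivative test applies.

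First I would add the two equations of \eqref{eqn: L-V BVP scaled} with $d=k=1$. The crucial step is the algebraic collapse of the nonlinearity: using $u^2+v^2=s^2-2uv$,
\begin{equation*}
u(1-u-a_1 v)+v(1-a_2 u-v)=s-s^2-(a_1+a_2-2)\,uv,
\end{equation*}
so that $s$ obeys
\begin{equation*}
s_{xx}+\theta\,s_x+s-s^2-(a_1+a_2-2)\,uv=0,\qquad a_1+a_2-2>0,\quad uv\ge 0.
\end{equation*}
One also records that $s>0$ on $\mathbb R$, since $s=0$ at a point would force $u\equiv v\equiv 0$ by uniqueness for the associated first-order ODE initial-value problem, contradicting the boundary data.

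For the upper bound one discards the nonnegative term $(a_1+a_2-2)uv$ to get $s_{xx}+\theta\,s_x\ge s(s-1)$; at an interior maximum $x_0$, where $s_x=0$ and $s_{xx}\le 0$, this gives $s(x_0)(s(x_0)-1)\le 0$, hence $s\le 1$. (Alternatively, $u+v\le 1$ is the instance $d_1=d_2=1$, $a=b=1$, $\bar u=\bar v=1$ of Theorem~\ref{thm: N-Barrier Maximum Principle for 2 Species}; the genuinely new content is the lower bound.) For the lower bound, the additional ingredient is to exploit the constraint $u+v=s$ through the AM--GM inequality $uv\le (u+v)^2/4=s^2/4$, which upgrades the scalar relation to the differential inequality
\begin{equation*}
s_{xx}+\theta\,s_x\;\le\;\frac{a_1+a_2+2}{4}\,s^2-s.
\end{equation*}
If $\inf_{\mathbb R}s<4/(a_1+a_2+2)$, then the infimum is attained at an interior minimum $x_0$, where $s_x=0$, $s_{xx}\ge 0$, and $s(x_0)>0$ force $\tfrac{a_1+a_2+2}{4}\,s(x_0)-1\ge 0$, i.e. $s(x_0)\ge 4/(a_1+a_2+2)$ --- a contradiction. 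Hence $u+v\ge 4/(a_1+a_2+2)$ on $\mathbb R$.

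There is no serious analytical obstacle here: the argument is essentially the two-line maximum-principle computation above, and taking $d=k=1$ is exactly what makes the summed equation self-contained enough to estimate directly, without any N-barrier construction. The only points demanding a little care are the soft ones --- verifying that the supremum and infimum of $s$ are genuinely attained at interior points (handled by the continuity-plus-equal-limits observation above) and ruling out $s=0$ there. The one idea that actually does the work is the AM--GM bound $uv\le s^2/4$: this is the ``more delicate'' use of the nonlinear terms that sharpens the lower bound for $u+v$ beyond what the generalized maximum principle gives.
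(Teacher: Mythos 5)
Your proof is correct, but it reaches the bound by a genuinely more elementary route than the paper. The paper obtains Theorem~\ref{thm: sharper lower bound for u+v} as the special case $\alpha=\beta=d=k=1$ of the tangent-line construction of Section~\ref{sec: tangent lines}: there $\lambda_2$ is the height of the tangent line of slope $-1$ to the hyperbola $F(u,v)=\alpha\,u\,(1-u-a_1v)+\beta\,k\,v\,(1-a_2u-v)=0$, computed from the quadratic \eqref{eqn: quadratic equation for lambda2} (with $\alpha=\beta=d=k=1$ one finds $\mu_2=-\tfrac{a_1+a_2+2}{4}$, $\mu_1=1$, $\mu_0=0$, hence $\lambda_2=\tfrac{4}{a_1+a_2+2}$), and the lower bound then follows from Proposition~\ref{prop: lower bed Mathematica} and Corollary~\ref{cor: lb<q(x)<ub Mathematica}, whose proof is the N-barrier argument of Proposition~\ref{prop: lower bed}, which for $d=1$ collapses to exactly the pointwise evaluation at the interior minimum that you perform. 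Your AM--GM step $uv\le s^2/4$ is the analytic expression of that tangency: it shows $F(u,v)=s-s^2-(a_1+a_2-2)\,uv\ge s\bigl(1-\tfrac{a_1+a_2+2}{4}\,s\bigr)>0$ on the open region $\{u,v\ge 0,\;0<u+v<\tfrac{4}{a_1+a_2+2}\}$, with equality in AM--GM at $u=v$, i.e., at the paper's point of tangency. What your route buys is self-containedness: no computation of $\lambda_2$ from \eqref{eqn: lambda2 Mathematica}, no N-barrier cases or integration along the orbit, because $d=k=1$ makes $p=q$ and the summed equation closes in $s=u+v$; what the paper's machinery buys is generality ($d\neq 1$ and arbitrary $\alpha,\beta,k$), where your scalar reduction would not suffice. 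Two soft points you flag are indeed the ones needing care, and both are handled correctly: the extrema are attained because $s\to1$ at $\pm\infty$ while the thresholds in question lie strictly below and at $1$; and at the minimum one must exclude $s(x_0)=0$ --- your uniqueness argument is sound, though it should be stated as uniqueness for the Cauchy problem of the second-order equation (equivalently its associated first-order system), using that $u(x_0)=0$ forces $u'(x_0)=0$ because $x_0$ minimizes $u\ge0$, and likewise for $v$.
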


\vspace{2mm}

It is easy to see that the lower bound for $u+v$ obtained by Theorem~\ref{thm: lb<q(x)<ub} is
$\min [1/a_1, 1/a_2]$, which is smaller than or equal to $\frac 4{a_1+a_2+2}$ and is less sharp when $a_1,a_2>1$. Note that the lower bound
in (\ref{eqn: delicate}) approaches $1$ as $(a_1,a_2)$ approaches $(1,1)$.

As an application of Theorem~\ref{thm: lb<q(x)<ub}, we establish nonexistence of traveling waves solutions for the Lotka-Volterra system of three competing species, i.e.nonexistence of traveling solutions of
\begin{equation}\label{eqn: L-V systems of three species (TWS)}
\begin{cases}
\vspace{3mm}
d_1\,u_{xx}\hspace{-0.5mm}+\theta \,u_x\hspace{-0.5mm}+u\,(\,\sigma_1-c_{11}\,u-c_{12}\,v-c_{13}\,w\,)=0, \quad x\in\mathbb{R}, \\
\vspace{3mm}
d_2\,v_{xx}+\theta \,v_x+v\,(\,\sigma_2-c_{21}\,u-c_{22}\,v-c_{23}\,w\,)=0, \quad x\in\mathbb{R},\\
d_3\,w_{xx}\hspace{-1mm}+\theta \,w_x\hspace{-1.5mm}+\hspace{-0.5mm}w\,(\,\sigma_3-c_{31}\,u-c_{32}\,v-c_{33}\,w\,)=0, \quad x\in\mathbb{R},\\
\end{cases}
\end{equation}
where $u(x,t)$, $v(x,t)$ and $w(x,t)$ represent the density of the three species $u$, $v$ and $w$ respectively; $d_i$, $\sigma_i$, $c_{ii}$ $(i=1,2,3)$, and $c_{ij}$ $(i,j=1,2,3, i\neq j)$ are the diffusion rates, the intrinsic growth rates, the intra-specific competition rates, and the inter-specific competition rates, respectively. These constants are all assumed to be positive.

For \eqref{eqn: L-V systems of three species (TWS)}, existence of solutions with profiles of one-hump waves supplemented with the boundary conditions
\begin{equation}\label{eqn: BC CHMU}
(u,v,w)(-\infty)=\Big(\frac{\sigma_1}{c_{11}},0,0\Big), \quad (u,v,w)(\infty)=\Big(0,\frac{\sigma_2}{c_{22}},0\Big)
\end{equation}
is investigated in \cite{CHMU}. Here a one-hump wave is a traveling wave which consists of a forward front $v$, a backward front $u$, and a pulse $w$ in the middle. By finding exact solutions and using the numerical tracking method AUTO, the existence of one-hump waves for \eqref{eqn: L-V systems of three species (TWS)},\eqref{eqn: BC CHMU} is established under certain assumptions on the parameters (\cite{CHMU}).

On the other hand, nonexistence of solutions for \eqref{eqn: L-V systems of three species (TWS)} and \eqref{eqn: BC CHMU} is studied in \cite{Chen&Hung15Nonexistence} when the diffusion rates $d_1$, $d_2$, and $d_3$ are assumed to be identical. In \cite{Chen&Hung15Nonexistence}, a subtle structure of the competing system, which heavily relies on equal diffusivity, is employed. With the aid of Theorem~\ref{thm: lb<q(x)<ub} (or the extended version Theorem~\ref{thm: lb<q(x)<ub, before scaling, appendix} in Section~\ref{sec: appendix}), we give a much more general nonexistence of solutions for \eqref{eqn: L-V systems of three species (TWS)} and \eqref{eqn: BC CHMU} when the diffusion rates of the species are no longer the same.

\vspace{2mm}

\begin{thm}[\textbf{Nonexistence of 3-species wave}]\label{thm: Nonexistence 3 species}
Let $\phi_1=\sigma_1\,c_{33}-\sigma_3\,c_{13}$ and $\phi_2=\sigma_2\,c_{33}-\sigma_3\,c_{23}$. Assume that the following hypotheses hold:
\begin{itemize}
\item [$\mathbf{[H1]}$] $\phi_1,\phi_2>0$; 
\item [$\mathbf{[H2]}$] $c_{21}\,\phi_1>c_{11}\,\phi_2,c_{12}\,\phi_2>c_{22}\,\phi_1$;
\item [$\mathbf{[H3]}$] 
$\min\bigg[\frac{\displaystyle c_{31}\,\phi_2}{\displaystyle c_{21}\,d_2},\frac{\displaystyle c_{32}\,\phi_1}{\displaystyle c_{12}\,d_1}\bigg]\,\min \big[d_1^2,d_2^2\big]\geq\sigma_3\,c_{33}$.
\end{itemize}
Then \eqref{eqn: L-V systems of three species (TWS)} and \eqref{eqn: BC CHMU} has no positive solution
$(u(x),v(x),w(x))$.
\end{thm}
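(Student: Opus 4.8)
\textit{Proof proposal.} The plan is to argue by contradiction, playing the equation for $w$ against a lower bound for $c_{31}u+c_{32}v$ produced by the maximum principle. Suppose $(u,v,w)$ is a positive solution of \eqref{eqn: L-V systems of three species (TWS)}--\eqref{eqn: BC CHMU}. Since $w>0$ on $\mathbb{R}$ and $w(\pm\infty)=0$ by \eqref{eqn: BC CHMU}, $w$ attains a positive global maximum at some $x_0\in\mathbb{R}$, where $w_x(x_0)=0$ and $w_{xx}(x_0)\le 0$. The third equation at $x_0$ then gives
\[
w(x_0)\bigl(\sigma_3-c_{31}u(x_0)-c_{32}v(x_0)-c_{33}w(x_0)\bigr)=-d_3\,w_{xx}(x_0)\ge 0 ,
\]
hence $\sigma_3-c_{31}u(x_0)-c_{32}v(x_0)-c_{33}w(x_0)\ge 0$. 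Two conclusions follow: (i) $c_{31}u(x_0)+c_{32}v(x_0)\le\sigma_3-c_{33}w(x_0)<\sigma_3$; and (ii) $w(x_0)\le\sigma_3/c_{33}$, so that $0\le w(x)\le\sigma_3/c_{33}$ for every $x\in\mathbb{R}$.

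Next I would substitute the uniform bound $w\le\sigma_3/c_{33}$ into the first two equations of \eqref{eqn: L-V systems of three species (TWS)}. Using $\mathbf{[H1]}$, so that $\phi_1,\phi_2>0$, the identity $\phi_i=\sigma_ic_{33}-\sigma_3c_{i3}$ yields
\begin{align*}
d_1u_{xx}+\theta u_x+u\Bigl(\tfrac{\phi_1}{c_{33}}-c_{11}u-c_{12}v\Bigr)&=c_{13}\,u\Bigl(w-\tfrac{\sigma_3}{c_{33}}\Bigr)\le 0,\\
d_2v_{xx}+\theta v_x+v\Bigl(\tfrac{\phi_2}{c_{33}}-c_{21}u-c_{22}v\Bigr)&=c_{23}\,v\Bigl(w-\tfrac{\sigma_3}{c_{33}}\Bigr)\le 0,
\end{align*}
so the nonnegative pair $(u,v)$ is a supersolution of the two-species Lotka--Volterra system obtained from the first two lines of \eqref{eqn: L-V systems of three species (TWS)} by discarding $w$ and lowering the growth rates from $\sigma_1,\sigma_2$ to $\phi_1/c_{33},\phi_2/c_{33}$. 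By $\mathbf{[H2]}$ this auxiliary system is bistable: its reduced inter-specific coefficients $c_{12}\phi_2/(c_{22}\phi_1)$ and $c_{21}\phi_1/(c_{11}\phi_2)$ both exceed $1$, the analogue of the hypothesis $a_1,a_2>1$. Moreover $(u,v)(-\infty)=(\sigma_1/c_{11},0)$ and $(u,v)(+\infty)=(0,\sigma_2/c_{22})$, and since $\phi_i/c_{33}<\sigma_i$ these limits strictly exceed the carrying capacities $\phi_1/(c_{11}c_{33}),\phi_2/(c_{22}c_{33})$ of the auxiliary system.

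Then I would apply the lower-bound half of Theorem~\ref{thm: lb<q(x)<ub} --- in its before-scaling form, Theorem~\ref{thm: lb<q(x)<ub, before scaling, appendix} --- to this supersolution pair. The point is that that half is proved by looking only at the region where the weighted mass $\tau_1u+\tau_2v$ is small: there $(u,v)$ lies in the inner triangle $\underaccent\bar{\mathcal{R}}$, the nonlinearities of the auxiliary system are nonnegative, and for a supersolution the second-order quantity driving the barrier argument of Theorem~\ref{thm: lb<q(x)<ub} stays nonpositive exactly as for a genuine solution; combined with the dominance of the boundary limits just noted (which, being above the auxiliary carrying capacities, a fortiori exceed the bound), the argument goes through unchanged. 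Carrying it out with the weight $(\tau_1,\tau_2)=(c_{31},c_{32})$ and performing the rescaling that normalizes the auxiliary system --- equivalently, substituting $\sigma_i\mapsto\phi_i/c_{33}$ in the before-scaling estimate --- should give a bound of the form
\[
c_{31}u(x)+c_{32}v(x)\ \ge\ \frac{1}{c_{33}}\,\min\!\left[\frac{c_{31}\,\phi_2}{c_{21}\,d_2},\ \frac{c_{32}\,\phi_1}{c_{12}\,d_1}\right]\min\!\bigl[d_1^2,d_2^2\bigr]\qquad(x\in\mathbb{R}).
\]
By $\mathbf{[H3]}$ the right-hand side is $\ge\sigma_3$, so $c_{31}u(x_0)+c_{32}v(x_0)\ge\sigma_3$, contradicting (i). Hence \eqref{eqn: L-V systems of three species (TWS)}--\eqref{eqn: BC CHMU} has no positive solution.

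The step I expect to be the crux is the third one: checking carefully that the lower-bound part of the maximum principle survives the passage from the exact two-species equations to the supersolution differential inequalities \emph{and} to the non-standard (over-sized) boundary data, and that after the substitution $\sigma_i\mapsto\phi_i/c_{33}$ the constants assemble exactly into the quantity appearing in $\mathbf{[H3]}$ rather than into a weaker one. The hump analysis for $w$ and the derivation of the supersolution inequalities are, by comparison, routine.
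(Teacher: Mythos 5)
Your proposal follows the paper's proof essentially step for step: you locate the interior maximum of $w$ to obtain $w\le\sigma_3/c_{33}$ together with $c_{31}u(x_0)+c_{32}v(x_0)<\sigma_3$, convert the first two equations into the differential inequalities with reduced growth rates $\phi_i/c_{33}$ (the paper's \eqref{eqn: nonexistence diff ineq <0}), apply the lower-bound estimate of Proposition~\ref{prop: lower bed, before scaling, appendix} using $\mathbf{[H1]}$ for positivity and $\mathbf{[H2]}$ for bistability, and derive the contradiction from $\mathbf{[H3]}$, arriving at the same bound the paper states. The point you flag as the crux --- that the lower-bound half only needs the supersolution inequalities and boundary limits lying beyond the barrier, so the over-sized data $(\sigma_1/c_{11},0)$, $(0,\sigma_2/c_{22})$ are harmless --- is precisely what the paper uses implicitly when it cites that proposition, so the two arguments coincide.
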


\vspace{2mm}




\textbf{Biological interpretation}: \textrm{Due to $\mathbf{[H2]}$, $u$ and $v$ are strongly competing in \eqref{eqn: nonexistence diff ineq <0} (see Section~\ref{sec: nonexistence}). However, we can find parameters such that $\mathbf{[BiS]}$ (see the Appendix in Section~\ref{sec: appendix}) which is slightly different from $\mathbf{[H2]}$ holds as well, i.e. $u$ and $v$ are also strongly competing in \eqref{eqn: L-V systems of three species (TWS)} as $w$ is absent. Moreover, it is easy to see that $\mathbf{[H3]}$ clearly holds if $\sigma_3$ is sufficiently small when other parameters are fixed. In conclusion, Theorem~\ref{thm: Nonexistence 3 species} asserts that, under certain conditions on the parameters, the three species $u$, $v$ and $w$ in the ecological system modeled by \eqref{eqn: L-V systems of three species (TWS)} and \eqref{eqn: BC CHMU} cannot coexist if the intrinsic growth rate $\sigma_3$ of $w$ is sufficiently small when strong competition between $u$ and $v$ occurs.
}

\vspace{5mm}






The remainder of this paper is organized as follows. Section~\ref{sec: main result} is devoted to  the proof of Theorem~\ref{thm: lb<q(x)<ub}.
Then we generalize Theorem~\ref{thm: lb<q(x)<ub} in Section~\ref{sec: General NBMP}. By using the tangent line to the quadratic curve
$\alpha\,u\,(1-u-a_1\,v)+\beta\,k\,v\,(1-a_2\,u-v)=0$, it is shown in Section~\ref{sec: tangent lines} that, under a certain condition
on the parameters, a stronger lower bound than the one given in Proposition~\ref{prop: lower bed} and Theorem~\ref{thm: lb<q(x)<ub} can be obtained.
Also, the proof of Theorem~\ref{thm: sharper lower bound for u+v} is presented in Section~\ref{sec: tangent lines}.
As an application of Theorem~\ref{thm: lb<q(x)<ub}, we establish Theorem~\ref{thm: Nonexistence 3 species} in Section~\ref{sec: nonexistence}.
Finally, we conclude the paper with corresponding results for \eqref{eqn: L-V BVP  before scaled} in the Appendix (Section~\ref{sec: appendix}).

\vspace{2mm}
\setcounter{equation}{0}
\setcounter{figure}{0}
\setcounter{subfigure}{0}
\section{Proof of Theorem~\ref{thm: lb<q(x)<ub}}\label{sec: main result}
\vspace{2mm}

In this section $p(x)=\alpha\,u(x)+\beta\,v(x)$ and $q(x)=\alpha\,u(x)+d\,\beta\,v(x)$, where $\alpha$ and $\beta$ are arbitrary positive constants. We begin with a useful lemma. 


\vspace{2mm}

\begin{lem}\label{lem: bistable then hyperbola}
Under the bistable condition $a_1>1$ and $a_2>1$, the quadratic curve $\alpha\,u\,(1-u-a_1\,v)+\beta\,k\,v\,(1-a_2\,u-v)=0$ is a hyperbola for
$\alpha>0$ and $\beta>0$.
\end{lem}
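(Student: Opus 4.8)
The plan is to read $Q(u,v):=\alpha\,u\,(1-u-a_1\,v)+\beta\,k\,v\,(1-a_2\,u-v)$ as a general second-degree curve in the $(u,v)$-plane and to invoke the classical discriminant classification of conics. First I would expand
\begin{equation*}
Q(u,v)=-\alpha\,u^{2}-(\alpha\,a_1+\beta\,k\,a_2)\,u\,v-\beta\,k\,v^{2}+\alpha\,u+\beta\,k\,v,
\end{equation*}
so that, writing $Q=A\,u^{2}+B\,u\,v+C\,v^{2}+D\,u+E\,v+F$, one has $A=-\alpha$, $B=-(\alpha\,a_1+\beta\,k\,a_2)$, $C=-\beta\,k$, $D=\alpha$, $E=\beta\,k$, $F=0$. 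Recall that $\{Q=0\}$ is a (possibly degenerate) hyperbola precisely when $B^{2}-4\,A\,C>0$, and that it is non-degenerate precisely when the determinant of the associated symmetric $3\times3$ matrix is nonzero; the lemma then follows by checking these two inequalities.

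For the discriminant, $B^{2}-4\,A\,C=(\alpha\,a_1+\beta\,k\,a_2)^{2}-4\,\alpha\,\beta\,k$. Applying $x^{2}+y^{2}\ge 2\,x\,y$ with $x=\alpha\,a_1$ and $y=\beta\,k\,a_2$ yields $(\alpha\,a_1+\beta\,k\,a_2)^{2}\ge 4\,a_1\,a_2\,\alpha\,\beta\,k$, and since the bistable hypothesis forces $a_1\,a_2>1$ we get $B^{2}-4\,A\,C\ge 4\,(a_1\,a_2-1)\,\alpha\,\beta\,k>0$. Hence $\{Q=0\}$ is a hyperbola up to degeneracy.

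To exclude degeneracy I would compute
\begin{equation*}
\det\begin{pmatrix} A & B/2 & D/2\\ B/2 & C & E/2\\ D/2 & E/2 & F\end{pmatrix}=\tfrac{1}{4}\bigl(D\,B\,E-D^{2}\,C-A\,E^{2}\bigr),
\end{equation*}
the last equality coming from expansion along the bottom row together with $F=0$. Substituting the values of $A,B,C,D,E$, the bracket collapses after cancellation to $\alpha\,\beta\,k\,\bigl(\alpha\,(1-a_1)+\beta\,k\,(1-a_2)\bigr)$, which is strictly negative since $\alpha,\beta,k>0$ while $a_1,a_2>1$. Thus the conic is non-degenerate, hence a genuine hyperbola.

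All the steps are elementary; the only place requiring a little care is the sign bookkeeping in the $3\times3$ determinant. It is worth noting that the bistable hypothesis is used in two slightly different ways: through $a_1\,a_2>1$ to obtain the positive discriminant, and through $a_1>1$ and $a_2>1$ separately to secure non-degeneracy.
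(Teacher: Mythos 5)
Your proposal is correct, and I checked the computations: with $A=-\alpha$, $B=-(\alpha a_1+\beta k a_2)$, $C=-\beta k$, $D=\alpha$, $E=\beta k$, $F=0$ one indeed gets $B^2-4AC=(\alpha a_1+\beta k a_2)^2-4\alpha\beta k\ge 4(a_1a_2-1)\alpha\beta k>0$ and $\det=\tfrac14\,\alpha\beta k\bigl(\alpha(1-a_1)+\beta k(1-a_2)\bigr)<0$. The core step (the sign of the conic discriminant, via the AM--GM-type inequality $(x+y)^2\ge 4xy$) is exactly the paper's argument; where you differ is that the paper stops there, concluding ``hyperbola'' from the positive discriminant alone, while you also verify non-degeneracy through the $3\times 3$ determinant. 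That extra step is not redundant: a positive discriminant only rules out ellipse and parabola types and is compatible with a degenerate conic consisting of two intersecting lines --- indeed at the excluded boundary value $a_1=a_2=1$ the curve factors as $(\alpha u+\beta k v)(1-u-v)=0$, so degeneracy is a real possibility that the bistable hypothesis must eliminate. Your computation shows precisely that $a_1>1$, $a_2>1$ forces the full $3\times3$ determinant to be nonzero, so your write-up proves the lemma in the strict sense (a genuine hyperbola), whereas the paper's shorter proof implicitly leaves the degenerate case unaddressed; for the later use of the lemma (describing the region $A_+$ bounded by the curve and the axes) either version suffices, but yours is the more complete justification of the statement as worded.
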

\begin{proof}
The discriminant of the quadratic curve $\alpha\,u\,(1-u-a_1\,v)+\beta\,k\,v\,(1-a_2\,u-v)=0$ is $(\alpha\,a_1+\beta\,k\,a_2)^2-4\,\alpha\,\beta\,k$. Since $a_1,a_2>1$, we have $(\alpha\,a_1+\beta\,k\,a_2)^2\geq 4\,\alpha\,\beta\,k\,a_1\,a_2> 4\,\alpha\,\beta\,k$. The positivity of the discriminant  gives the desired result.
\end{proof}

\vspace{2mm}

\noindent The lemma indicates that the quadratic curve
\begin{equation}\label{quadratic}
F(u,v):=\alpha\,u\,(1-u-a_1\,v)+\beta\,k\,v\,(1-a_2\,u-v)=0
\end{equation}
cannot either be an ellipse or a parabola under the bistable condition $a_1,a_2>1$. 

In Propositions~\ref{prop: lower bed} and \ref{prop: upper bed} below, we give a lower bound and an upper bound for $q(x)$, respectively. Combining the results in Propositions~\ref{prop: lower bed} and \ref{prop: upper bed}, we immediately obtain Theorem~\ref{thm: lb<q(x)<ub}.

\vspace{2mm}

\begin{prop} [\textbf{Lower bound for $q=q(x)$}]\label{prop: lower bed}
Let $a_1>1$ and $a_2>1$. Suppose that $(u(x),v(x))$ is $C^2$, nonnegative, and satisfies the following differential inequalities
and asymptotic behaviour:
\begin{equation}\label{eqn: L-V <0}
\begin{cases}
\vspace{3mm}
\hspace{2.0mm} u_{xx}+\theta\,u_{x}+\;\;u\,(1-u-a_1\,v)\leq0, \quad x\in\mathbb{R}, \\
\vspace{3mm}
d\,v_{xx}+\theta\,v_{x}+k\,v\,(1-a_2\,u-v)\leq0, \quad x\in\mathbb{R},\\
(u,v)(-\infty)=\text{\bf e}_2,\quad (u,v)(+\infty)= \text{\bf e}_3.
\end{cases}
\end{equation}
Then we have for $x\in\mathbb{R}$,
\begin{equation}\label{q lower bound}
q(x)\geq \min\bigg[\frac{\alpha}{a_2\,d},\frac{\beta}{a_1}\bigg]\,\min [1,d^2].
\end{equation}

\end{prop}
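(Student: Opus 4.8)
The plan is to argue by contradiction: suppose the conclusion fails, so that the quantity $\underaccent\bar{q}:=\inf_{x\in\mathbb R} q(x)$ is strictly smaller than $m:=\min[\alpha/(a_2 d),\,\beta/a_1]\,\min[1,d^2]$. First I would record the boundary behaviour: since $(u,v)(-\infty)=\mathbf e_2=(1,0)$ and $(u,v)(+\infty)=\mathbf e_3=(0,1)$, we have $q(-\infty)=\alpha$ and $q(+\infty)=d\beta$, both of which are $\geq m$ (this is a quick check using $\min[1,d^2]\le 1$ and $\min[1,d^2]\le d^2$). Hence if $\underaccent\bar q<m$ the infimum is attained at some finite point, or else approached along a sequence; I would prefer to first establish enough compactness/regularity (elliptic estimates on the $C^2$ solution, or a sliding/translation argument) to know that the infimum of $q$ is actually attained at some $x_0\in\mathbb R$ with $q'(x_0)=0$ and $q''(x_0)\geq 0$.

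The heart of the matter is to combine the two differential inequalities in \eqref{eqn: L-V <0} in the right proportions so that a contradiction pops out at $x_0$. Multiply the first inequality by $\alpha$ and the second by $\beta$ and add; writing $p=\alpha u+\beta v$ and $q=\alpha u+d\beta v$ one gets
\begin{equation*}
q''(x)+\theta\,p'(x)+F(u(x),v(x))\le 0,
\end{equation*}
where $F$ is the quadratic form in \eqref{quadratic}. The difficulty is the presence of $\theta\,p'$ rather than $\theta\,q'$: at the minimum point of $q$ we control $q'(x_0)=0$ but not $p'(x_0)$, and when $d\neq 1$ these do not coincide. The key idea I would use to get around this is that the minimum of $q$ is attained on the set $\{q=\underaccent\bar q\}$, and on a level set of $q$ one can parametrize $(u,v)$ by a single variable; more usefully, I expect the intended route is to look at where the graph $x\mapsto(u(x),v(x))$ in the $(u,v)$-plane is tangent to the line $\{\alpha u+d\beta v=\underaccent\bar q\}$ from the correct side, and to exploit that the region $\{q\le\underaccent\bar q\}$, $u,v\ge 0$ is a triangle sitting below the hyperbola $F=0$ (by Lemma~\ref{lem: bistable then hyperbola} the curve $F=0$ is a hyperbola, and one checks its two branches pass through $(1,0)$ and $(0,1)$, so $F>0$ strictly inside a neighbourhood of the origin bounded away from the asymptotes). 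The choice of the constant $m$ is exactly calibrated so that the triangle $\{\alpha u+d\beta v\le m,\ u,v\ge0\}$ lies in the region where $F(u,v)>0$: the vertices of that triangle are $(m/\alpha,0)$ and $(0,m/(d\beta))$, and one verifies $F>0$ there using $a_1,a_2>1$ together with the $\min[1,d^2]$ factor, which accounts precisely for the gap between $\alpha$ and $d\beta$.

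So the decisive step is: if $\underaccent\bar q<m$ then along the orbit there is a point $x_0$ where $q$ attains a local minimum value $<m$, hence $(u(x_0),v(x_0))$ lies in the open triangle where $F>0$, giving $F(u(x_0),v(x_0))>0$; at such a minimum $q''(x_0)\ge0$. The remaining obstacle — and I expect this to be the crux — is handling the $\theta\,p'(x_0)$ term. I would resolve it by choosing $x_0$ more cleverly: instead of the global min of $q$, consider the function along the direction in which the inequality is ``tight,'' or integrate the combined inequality against a suitable positive weight $e^{\theta x/d}$ (an integrating factor that turns $q''+\theta p'$ into something monotone only when $p\propto q$, i.e. $d=1$) — which is why the general $d\neq1$ case genuinely needs the geometric/hyperbola argument rather than a one-line maximum principle. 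Concretely, I would show that if $q$ dips below $m$ somewhere, then on the sub-level set $\{q<m\}$ we have $q''\le-\theta p' - F < -\theta p'$, and by analyzing the first and last points where $q$ crosses the value $m$ (so that $q'\le0$ at the left crossing and $q'\ge0$ at the right crossing) together with $p$ and $q$ having the same monotonicity sign wherever only one of $u,v$ is varying, one forces $\int (q''+\theta p')\,dx$ to have a sign incompatible with $\int F<0$. The cleanest version is likely: apply the one-sided maximum principle to $q$ after observing that, on $\{q<m\}$, the sign of $p'$ is controlled by the sign of $q'$ because both $u$ and $v$ are monotone in a neighbourhood of the minimizing configuration; I would flesh this out, but that comparison of $\operatorname{sgn}p'$ and $\operatorname{sgn}q'$ near the minimum is the step I anticipate needing the most care.
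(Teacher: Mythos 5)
You have the right combined inequality $q''+\theta p'+F(u,v)\le 0$ and the right calibration (the level set $\{\alpha u+d\beta v\le m,\ u,v\ge 0\}$ sits where $F>0$), but the step you yourself flag as the crux --- controlling the $\theta\,p'$ term when $d\neq 1$ --- is exactly where your proposal has a genuine gap, and none of the fixes you sketch closes it. Integrating between the first and last crossings of the single level $q=m$ gives $\int q''\,dx\ge 0$ and $\int F\,dx>0$ (note: $F>0$ there, so the relevant integral is positive, not negative as you wrote), but the leftover term $\theta\,(p(\text{right})-p(\text{left}))$ has no sign: on a line $\alpha u+d\beta v=\text{const}$ with $d\neq1$ the value of $p=\alpha u+\beta v$ varies along the line, so nothing pins down $p$ at the two crossing points relative to each other. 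Your proposed rescue --- that near the minimizing configuration $u$ and $v$ are individually monotone, so $\operatorname{sgn}p'=\operatorname{sgn}q'$ --- is unjustified: the hypotheses are differential inequalities with no monotonicity, and $p'=\alpha u'+\beta v'$ and $q'=\alpha u'+d\beta v'$ can indeed have opposite signs when $u'v'<0$. The integrating-factor idea you mention works only for $d=1$, as you note.

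The paper's proof supplies precisely the missing device: an ``N-barrier'' of \emph{three} nested lines, $\alpha u+d\beta v=\lambda_1$, $\alpha u+\beta v=\eta$, and $\alpha u+d\beta v=\lambda_2$, chosen so that $Q_{\lambda_1}\subset P_{\eta}\subset Q_{\lambda_2}\subset\underaccent\bar{\mathcal{R}}$ (this nesting, not just ``the triangle below $m$ lies in $\{F>0\}$'', is what forces the extra factors $\min[1,d^2]$ and $1/d$ in the bound: $\lambda_1<\lambda_2$ exactly to fit the middle $p$-line between the two $q$-lines). One then takes a minimizer $z$ with $q(z)<\lambda_1$, lets $z_1,z_2$ be the first crossings of the \emph{outer} line $q=\lambda_2$ to the left and right of $z$, splits according to the sign of $\theta$, and integrates the combined inequality one-sidedly: from $z_1$ to $z$ if $\theta\le 0$, from $z$ to $z_2$ if $\theta\ge 0$. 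Then $q'(z)=0$, $q'(z_1)\le 0$ (resp.\ $q'(z_2)\ge 0$), the middle line gives $p(z)<\eta<p(z_1)$ (resp.\ $p(z_2)>\eta$) so that $\theta\,(p(z)-p(z_1))\ge 0$ (resp.\ $\theta\,(p(z_2)-p(z))\ge 0$), and $F>0$ along the whole arc because it stays in $Q_{\lambda_2}\subset\underaccent\bar{\mathcal{R}}$; summing contradicts the integrated inequality. Without this intermediate $p$-line and the one-sided, sign-of-$\theta$-dependent integration, your argument does not go through, so as it stands the proposal is incomplete at its decisive step.
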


\begin{proof}
Let $\underaccent\bar{\mathcal{R}}=\{(u.v)\,|\,1-u-a_1\,v\ge 0, 1-a_2\,u-v\ge 0, u\ge 0, v\ge 0\}$. First we construct an appropriate \textit{N-barrier}
consisting of three lines $\alpha\,u+d\,\beta\,v=\lambda_2$, $\alpha\,u+\beta\,v=\eta$ and
$\alpha\,u+d\,\beta\,v=\lambda_1$, and chose $\lambda_1$, $\lambda_2$ and $\eta$ as large as possible such that
$Q_{\lambda_1}\subset P_{\eta}\subset Q_{\lambda_2}\subset \underaccent\bar{\mathcal{R}}$,
where $Q_{\lambda}=\{(u,v)\,|\,\alpha\,u+d\,\beta\,v\le\lambda, u\ge 0, v\ge 0\}$ and
$P_{\eta}=\{(u,v)\,|\,\alpha\,u+\beta\,v\le\eta, u\ge 0, v\ge 0\}$. Then we show that $\lambda_1$ can be taken
to equal the value on the right hand side of (\ref{q lower bound}) and $q(x)\ge\lambda_1$ can be verified via the structure of the N-barrier.

Now we illustrate how to construct \textit{the N-barrier} in detail. For the case of $d\geq1$ and $\beta\,a_2\,d\geq \alpha\,a_1$,
the N-barrier is constructed in the following three steps (see Figure~2.\ref{fig: d>1_beta_a_2d>alpha_a1}):
\begin{itemize}
  \item[(1)] \textit{The construction of the upper pink line}: we draw on the $uv$-plane the upper pink line $\alpha\,u+d\,\beta\,v=\lambda_2$
  which passes through $(\frac{1}{a_2},0)$. This gives $\lambda_2=\frac{\alpha}{a_2}$, and hence the upper pink line is represented by the equation
  $\alpha\,u+d\,\beta\,v=\frac{\alpha}{a_2}$. The $v$-coordinate of the $v$-intercept of
  $\alpha\,u+d\,\beta\,v=\frac{\alpha}{a_2}$ is $\frac{\alpha}{\beta\,a_2\,d}$, which is less than or equal to $\frac{1}{a_1}$ by the assumption
  $\beta\,a_2\,d\geq \alpha\,a_1$. This means that the $v$-coordinate of the $v$-intercept of $\alpha\,u+d\,\beta\,v=\frac{\alpha}{a_2}$ is below
  the $v$-coordinate of $v$-intercept of the $1-u-a_1\,v=0$.
  \item[(2)] \textit{The construction of the yellow line}: we let the the yellow line $\alpha\,u+\beta\,v=\eta$ start from
  $(0,\frac{\alpha}{\beta\,a_2\,d})$. This leads to $\eta=\frac{\alpha}{a_2\,d}$ and hence the yellow line is represented by the equation
  $\alpha\,u+\beta\,v=\frac{\alpha}{a_2\,d}$. The $u$-coordinate of the $u$-intercept of $\alpha\,u+\beta\,v=\frac{\alpha}{a_2\,d}$ is
  $\frac{1}{a_2\,d}$, which is less than or equal to $\frac{1}{a_2}$ by the assumption $d\geq1$. This means that the $u$-coordinate of the
  $u$-intercept of $\alpha\,u+\beta\,v=\frac{\alpha}{a_2\,d}$ is less than or equal to the $u$-coordinate of $u$-intercept of
  $\alpha\,u+d\,\beta\,v=\frac{\alpha}{a_2}$.
  \item[(3)]  \textit{The construction of the lower pink line}: we draw the lower pink line $\alpha\,u+d\,\beta\,v=\lambda_1$
  passing through $(\frac{1}{a_2\,d},0)$. This gives $\lambda_1=\frac{\alpha}{a_2\,d}$.
\end{itemize}
There are three other cases, each of which can be treated in a similar manner for the construction of the corresponding N-barrier (see Figures~2.\ref{fig: d>1_beta_a_2d<alpha_a1}, 2.\ref{fig: d<1_beta_a_2d>alpha_a1}, and 2.\ref{fig: d<1_beta_a_2d<alpha_a1}).
More precisely, we have the following four cases and for each case, we take different $\lambda_1$, $\lambda_2$ and $\eta$, and show that $q(x)$
has the lower bound $\lambda_1$ for $x\in\mathbb R$:
\begin{itemize}
     \item If $d\geq1$,
\begin{itemize}
\item [$(i)$] when $\beta\,a_2\,d\geq \alpha\,a_1$, we take $(\lambda_1,\lambda_2,\eta):=(\dfrac{\alpha}{a_2\,d},\dfrac{\alpha}{a_2},\dfrac{\alpha}{a_2\,d})$;
\item [$(ii)$] when $\beta\,a_2\,d < \alpha\,a_1$, we take $(\lambda_1,\lambda_2,\eta):=(\dfrac{\beta}{a_1},\dfrac{\beta\,d}{a_1},\dfrac{\beta}{a_1})$.
\end{itemize}
      \item If $d<1$,
\begin{itemize}
\item [$(iii)$] when $\beta\,a_2\,d\geq \alpha\,a_1$, $(\lambda_1,\lambda_2,\eta):=(\dfrac{\alpha\,d}{a_2},\dfrac{\alpha}{a_2},\dfrac{\alpha}{a_2})$;
\item [$(iv)$] when $\beta\,a_2\,d < \alpha\,a_1$, $(\lambda_1,\lambda_2,\eta):=(\dfrac{\beta\,d^2}{a_1},\dfrac{\beta\,d}{a_1},\dfrac{\beta\,d}{a_1})$.
\end{itemize}
We note that case $(i)$ corresponds to Figure~2.\ref{fig: d>1_beta_a_2d>alpha_a1},
in which the N-barrier has been constructed in the above three steps.
The other cases $(ii)$, $(iii)$, and $(iv)$ correspond to Figures~2.\ref{fig: d>1_beta_a_2d<alpha_a1}, 2.\ref{fig: d<1_beta_a_2d>alpha_a1},
and 2.\ref{fig: d<1_beta_a_2d<alpha_a1}, respectively.
\end{itemize}

We first observe that the property $q(x)\ge\lambda_1$ in the four cases can be reduced to the following two cases:
\begin{itemize}
  \item for $\beta\,a_2\,d\geq \alpha\,a_1$, $q(x)\geq \frac{\alpha}{a_2}\min [d,1/d]$ for all $x\in \mathbb{R}$;
  \item for $\beta\,a_2\,d<      \alpha\,a_1$, $q(x)\geq \frac{\beta}{a_1}\min [1,d^2]$ for all $x\in \mathbb{R}$.
\end{itemize}
Combining the two cases above leads to $q(x)\geq \min[\frac{\alpha}{a_2\,d},\frac{\beta}{a_1}]\,\min [1,d^2]$ for all $x\in \mathbb{R}$,
which is the desired result.

Now we show $q(x)\ge\lambda_1$ in $(i)\sim (iv)$. The two inequalities in \eqref{eqn: L-V <0} and (\ref{quadratic}) give
\begin{equation}\label{eqn: ODE for p and q<}
q''(x)+\theta\,p'(x)+F(u(x),v(x))\leq0.
\end{equation}
For $d>1$, we first prove $(i)$ by contradiction. Suppose that, contrary to our claim, there exists $z\in\mathbb{R}$ such that
$q(z)<\lambda_1$. Since $u,v\in C^2(\mathbb{R})$, by $(u,v)(-\infty)=(1,0)$ and $(u,v)(+\infty)=(0,1)$, we may assume $\min_{x\in\mathbb{R}} q(x)=q(z)$. 
We denote respectively by $z_2$ and $z_1$ the first points at which the solution $(u(x),v(x))$ intersects the line $\alpha\,u+d\,\beta\,v=\lambda_2$
in the $uv$-plane when $x$ moves from $z$ towards $\infty$ and $-\infty$ (as shown in Figure~2.\ref{fig: d>1_beta_a_2d>alpha_a1}). For the case where $\theta\leq0$, we integrate \eqref{eqn: ODE for p and q<} with respect to $x$ from $z_1$ to $z$ and obtain
\begin{equation}\label{eqn: integrating eqn}
q'(z)-q'(z_1)+\theta\,(p(z)-p(z_1))+\int_{z_1}^{z}F(u(x),v(x))\,dx\leq0.
\end{equation}
On the other hand we have:
\begin{itemize}
  \item since $\min_{x\in\mathbb{R}} q(x)=q(z)$, $q'(z)=\alpha\,u'(z)+d\,\beta\,v'(z)=0$;
  \item $q(z_1)=\lambda_2$ follows from the fact that $z_1$ is on the line $\alpha\,u+d\,\beta\,v=\lambda_2$. Since $z_1$ is the first point
  for $q(x)$ taking the value $\lambda_2$ when $x$ moves from $z$ to $-\infty$, we conclude that $q(z_1+\delta)\leq \lambda_2$ for $z-z_1>\delta>0$
  and $q'(z_1)\leq 0$;
  \item $p(z)<\eta$ since $z$ is below the line $\alpha\,u+\beta\,v=\eta$; $p(z_1)>\eta$ since $z$ is above the line $\alpha\,u+\beta\,v=\eta$;
  \item it is readily seen that the quadratic curve $F(u,v)=0$ passes through the points $(0,0)$, $(1,0)$, $(0,1)$, and $(u^{\ast},v^{\ast})$
  in the $uv$-plane. Let $A_+=\{(u,v)\,|\, F(u,v)\ge 0, u\ge 0,v\ge0\}$. By Lemma~\ref{lem: bistable then hyperbola} and the property that $F(u,v)<0$
  for large $u$ and $v$, it follows that $A_+$ is the region bounded by a hyperbola, $u$-axis and $v$-axis.
  Moreover, $\{(u(x),v(x))\,|\,z_1\le x \le z\}$ $\subset\underaccent\bar{\mathcal{R}}$ $\subset A_+$.
  Therefore we have $\int_{z_1}^{z}F(u(x),v(x))\,dx>0$.

\end{itemize}
Summarizing the above arguments, we obtain
\begin{equation}
q'(z)-q'(z_1)+\theta\,(p(z)-p(z_1))+\int_{z_1}^{z}F(u(x),v(x))\,dx>0,
\end{equation}
which contradicts \eqref{eqn: integrating eqn}. Therefore when $\theta\leq0$, $q(x)\geq \lambda_1$ for $x\in \mathbb{R}$. For the case where $\theta\geq0$, integrating \eqref{eqn: ODE for p and q<} with respect to $x$ from $z$ to $z_2$ yields
\begin{equation}\label{eqn: eqn by integrate from z to z2}
q'(z_2)-q'(z)+\theta\,(p(z_2)-p(z))+\int_{z}^{z_2}F(u(x),v(x))\,dx\leq0.
\end{equation}
In a similar manner, it can be shown that $q'(z_2)\ge 0$, $q'(z)=0$, $p(z_2)>\eta$, $p(z)<\eta$, and $\int_{z}^{z_2}F(u(x),v(x))\,dx>0$. These together contradict \eqref{eqn: eqn by integrate from z to z2}. Consequently, $(i)$ is proved for $d>1$.
For $d=1$, we have $q=p$ and \eqref{eqn: ODE for p and q<} becomes
\begin{equation}\label{eqn: ODE for p and q p=q}
p''(x)+\theta\,p'(x)+F(u(x),v(x))\leq0, \quad x\in \mathbb{R}.
\end{equation}
Moreover, when $d=1$ we take $\lambda_1=\lambda_2=\eta=\frac{\alpha}{a_2}$, i.e. the three lines $\alpha\,u+d\,\beta\,v=\lambda_1$, $\alpha\,u+d\,\beta\,v=\lambda_2$, and $\alpha\,u+\beta\,v=\eta$ coincide. Analogously to the case of $d>1$, we assume that there exists $\hat{z}\in\mathbb{R}$ such that $p(\hat{z})<\lambda_1$ and $\min_{x\in\mathbb{R}} p(x)=p(\hat{z})$. Due to $\min_{x\in\mathbb{R}} p(x)=p(\hat{z})$, we have $p'(\hat{z})=0$ and $p''(\hat{z})\geq0$. Since $(u(\hat z),v(\hat z))$ is in the interior of $\underaccent\bar{\mathcal{R}}$, which is contained in the interior of $A_+$, we have $F(u(\hat{z}),v(\hat{z}))>0$. These together give $p''(\hat{z})+\theta\,p'(\hat{z})+F(u(\hat{z}),v(\hat{z}))>0$, which contradicts \eqref{eqn: ODE for p and q p=q}. Thus, $p(x)\geq \lambda_1$ for all $x\in \mathbb{R}$ when $d=1$. As a result, the proof of $(i)$ is completed.


The proofs for cases $(ii)$, $(iii)$, and $(iv)$ are similar (see Figures~2.\ref{fig: d>1_beta_a_2d<alpha_a1}, 2.\ref{fig: d<1_beta_a_2d>alpha_a1}, and 2.\ref{fig: d<1_beta_a_2d<alpha_a1}). This completes the proof of Proposition~\ref{prop: lower bed}.
\end{proof}


\begin{figure}[ht!]
\centering
\mbox{
\subfigure[]{\includegraphics[width=0.52\textwidth]{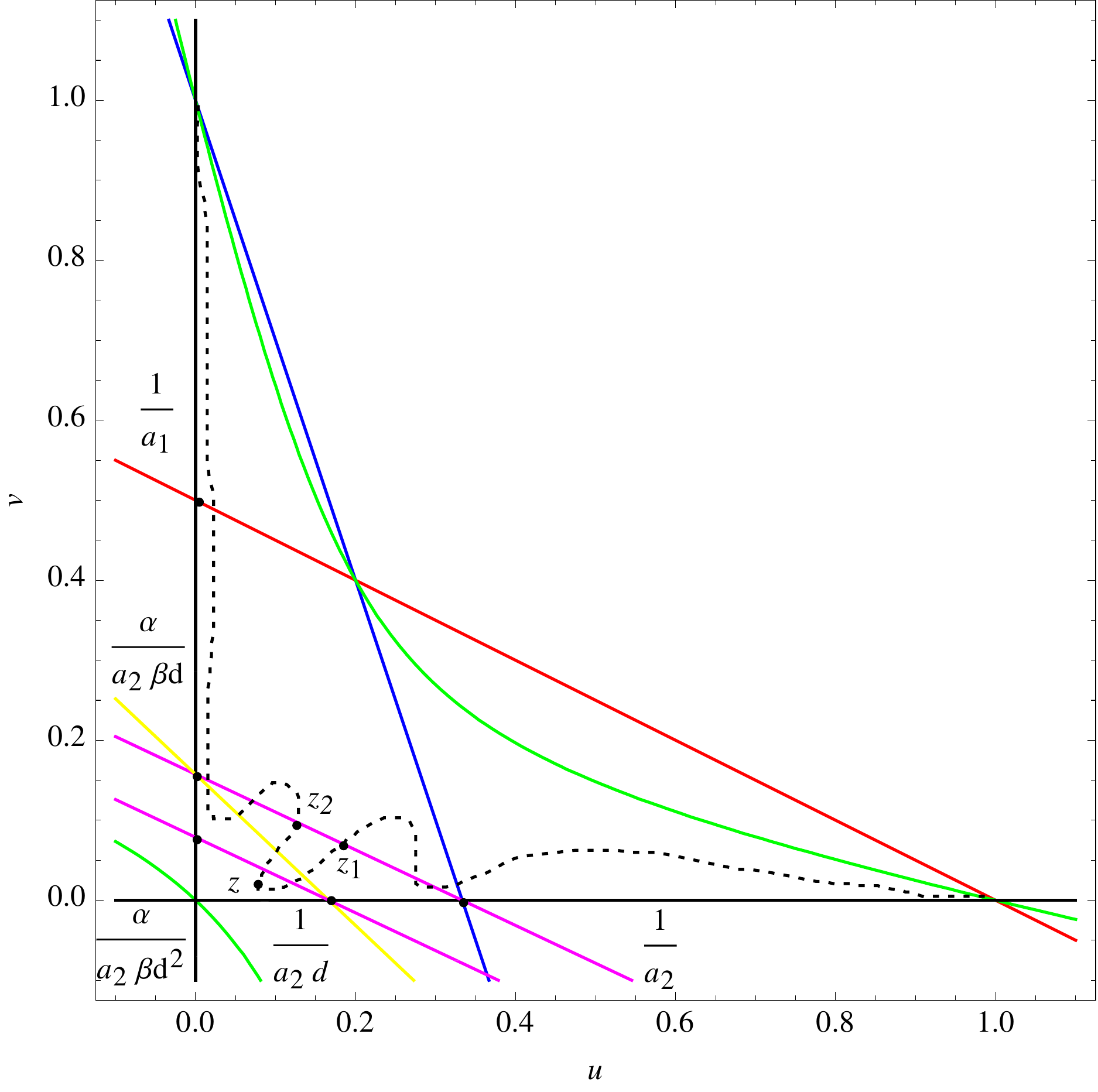}
             \label{fig: d>1_beta_a_2d>alpha_a1}    } \quad \hspace{0mm}
\subfigure[]{\includegraphics[width=0.52\textwidth]{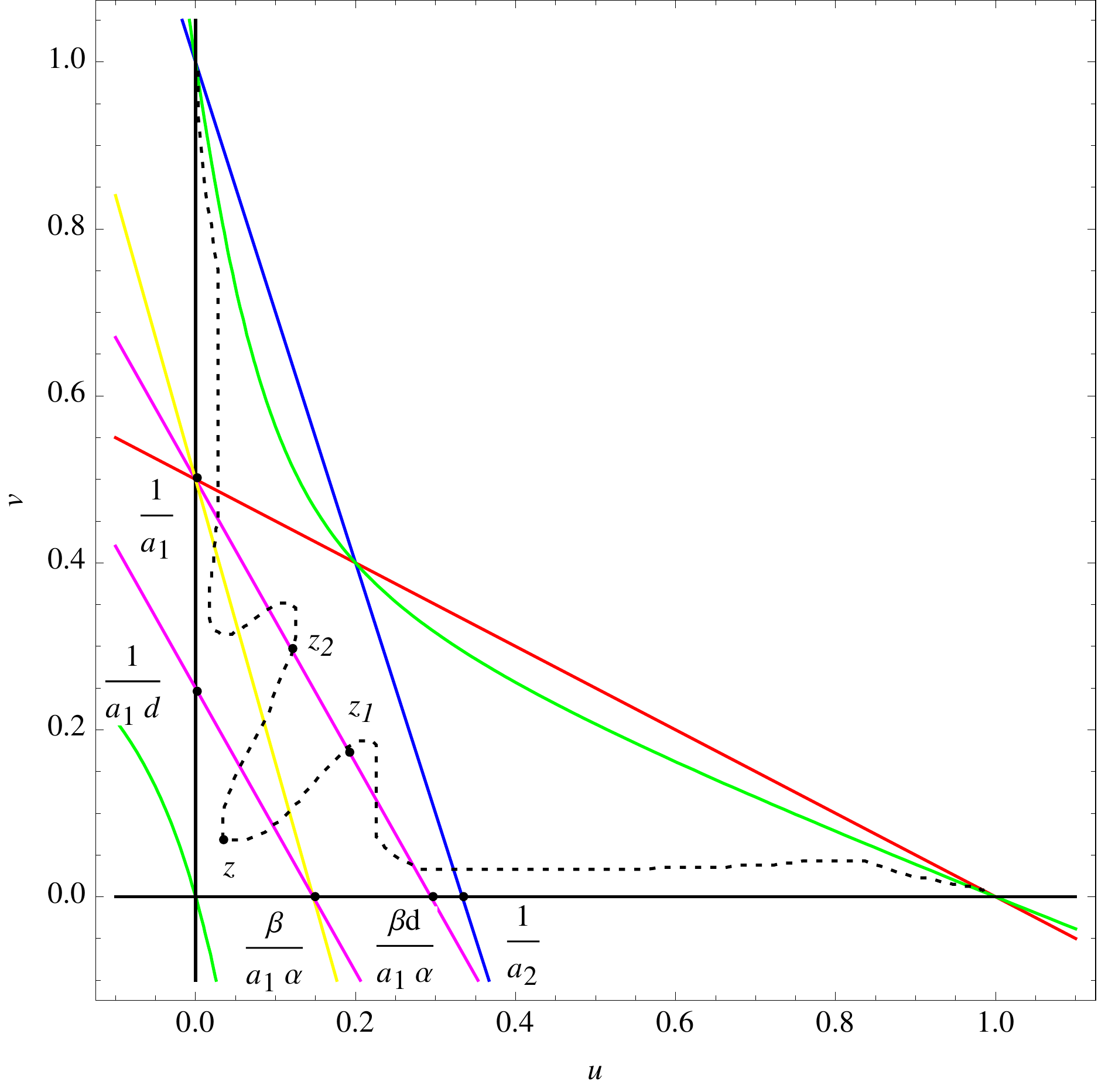}
             \label{fig: d>1_beta_a_2d<alpha_a1}    } \quad \hspace{0mm}
             }
\mbox{
\subfigure[]{\includegraphics[width=0.52\textwidth]{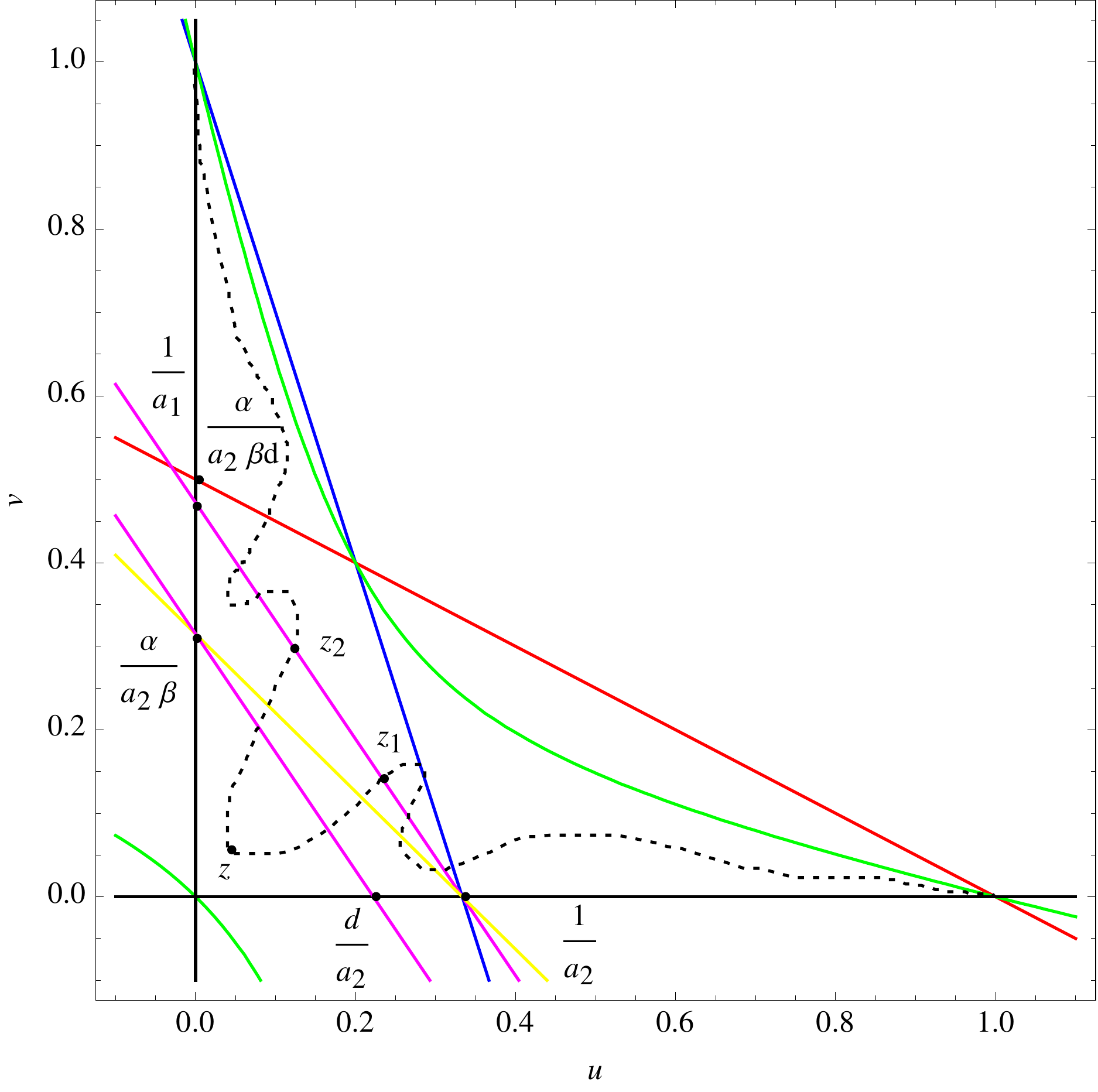}
             \label{fig: d<1_beta_a_2d>alpha_a1}    } \quad \hspace{0mm}
\subfigure[]{\includegraphics[width=0.52\textwidth]{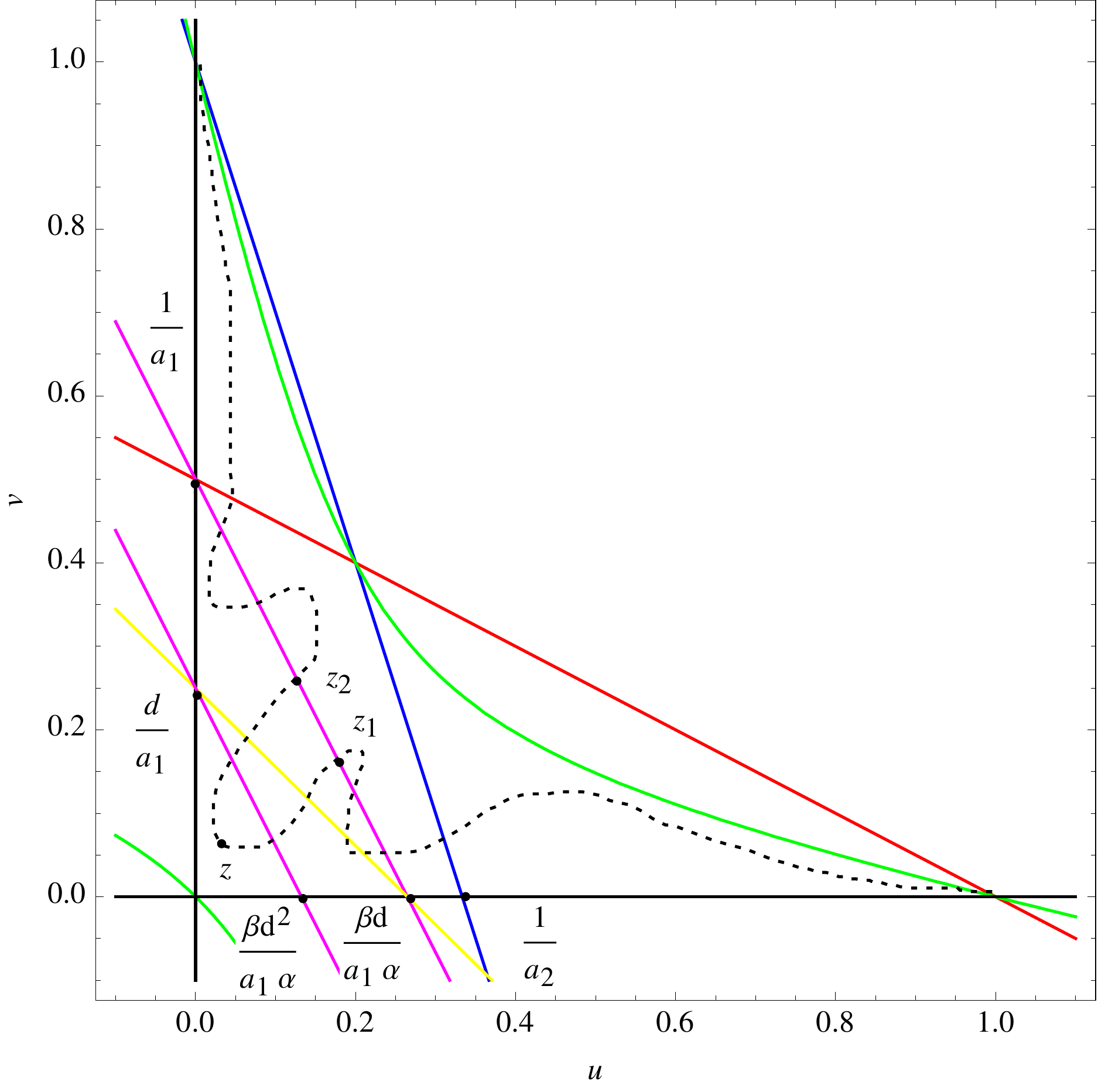}
             \label{fig: d<1_beta_a_2d<alpha_a1}    } \quad \hspace{0mm}
}
\caption{\small Red line: $1-u-a_1\,v=0$; blue line: $1-a_2\,u-v=0$; green curve: $\alpha\,u\,(1-u-a_1\,v)+\beta\,k\,v\,(1-a_2\,u-v)=0$; magenta line (above): $\alpha\,u+d\,\beta\,v=\lambda_2$; magenta line (below): $\alpha\,u+d\,\beta\,v=\lambda_1$; yellow line: $\alpha\,u+\beta\,v=\eta$; dashed curve: $(u(x),v(x))$.
\subref{fig: d>1_beta_a_2d>alpha_a1} $a_1=2$, $a_2=3$, $\alpha=17$, $\beta=18$, and $d=2$ give $\lambda_1=\frac{17}{6}$, $\lambda_2=\frac{17}{3}$, and $\eta=\frac{17}{6}$.
\subref{fig: d>1_beta_a_2d<alpha_a1} $a_1=2$, $a_2=3$, $\alpha=17$, $\beta=5$, and $d=2$ give $\lambda_1=\frac{5}{2}$, $\lambda_2=5$, and $\eta=\frac{5}{2}$.
\subref{fig: d<1_beta_a_2d>alpha_a1} $a_1=2$, $a_2=3$, $\alpha=17$, $\beta=18$, and $d=\frac{2}{3}$ give $\lambda_1=\frac{34}{9}$, $\lambda_2=\frac{17}{3}$, and $\eta=\frac{17}{3}$.
\subref{fig: d<1_beta_a_2d<alpha_a1} $a_1=2$, $a_2=3$, $\alpha=17$, $\beta=18$, and $d=\frac{1}{2}$ give $\lambda_1=\frac{9}{4}$, $\lambda_2=\frac{9}{2}$, and $\eta=\frac{9}{2}$.
\label{fig: 4 figures lower bound}}
\end{figure}

\begin{prop} [\textbf{Upper bound for $q=q(x)$}]\label{prop: upper bed}
Assume that $a_1>1$, $a_2>1$, and that $(u(x),v(x))$ is $C^2$, nonnegative, and satisfies the following differential inequalities:
\begin{equation}\label{eqn: L-V >0}
\begin{cases}
\vspace{3mm}
\hspace{2.0mm} u_{xx}+\theta\,u_{x}+\;\;u\,(1-u-a_1\,v)\geq0, \quad x\in\mathbb{R}, \\
\vspace{3mm}
d\,v_{xx}+\theta\,v_{x}+k\,v\,(1-a_2\,u-v)\geq0, \quad x\in\mathbb{R},\\
(u,v)(-\infty)=\text{\bf e}_2,\quad (u,v)(+\infty)= \text{\bf e}_3.
\end{cases}
\end{equation}
Then for $x\in\mathbb{R}$, we have


\begin{equation}
q(x)\leq \max\bigg[\frac{\alpha}{d},\beta\bigg]\,\max [1,d^2].
\end{equation}


\end{prop}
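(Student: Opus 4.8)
The idea is to run the proof of Proposition~\ref{prop: lower bed} ``in reverse''. Put
\[
\bar{\mathcal R}=\{(u,v)\mid 1-u-a_1\,v\le 0,\ 1-a_2\,u-v\le 0,\ u\ge 0,\ v\ge 0\},
\]
on which both reaction factors in \eqref{eqn: L-V >0} are nonpositive, so that $F(u,v)\le 0$ there; moreover $F$ vanishes on $\bar{\mathcal R}$ only at the three points $(1,0)$, $(0,1)$, $(u^{\ast},v^{\ast})$ (by Lemma~\ref{lem: bistable then hyperbola}, $\bar{\mathcal R}$ lies in the unbounded component of $\{F\le 0\}$ bounded by the hyperbola). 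Multiplying the two inequalities in \eqref{eqn: L-V >0} by $\alpha$ and $\beta$ and adding yields the scalar inequality
\[
q''(x)+\theta\,p'(x)+F(u(x),v(x))\ge 0,\qquad x\in\mathbb R,
\]
the reverse of \eqref{eqn: ODE for p and q<}.

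I would then construct the N-barrier ``from the outside in''. With $Q^{\mu}=\{\alpha\,u+d\,\beta\,v\ge\mu,\ u,v\ge 0\}$ and $P^{\nu}=\{\alpha\,u+\beta\,v\ge\nu,\ u,v\ge 0\}$, I seek three lines $\alpha\,u+d\,\beta\,v=\mu_2$, $\alpha\,u+\beta\,v=\nu$, $\alpha\,u+d\,\beta\,v=\mu_1$ with $\mu_1$ as small as possible such that
\[
Q^{\mu_2}\subseteq P^{\nu}\subseteq Q^{\mu_1}\subseteq\bar{\mathcal R}.
\]
Each of these sets is convex with recession cone the first quadrant, so each inclusion amounts to checking that the two axis-corners of the inner set lie in the outer one. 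These conditions force $\mu_1=\max(\alpha,d\,\beta)$, then $\nu=\mu_1\max(1,1/d)$, then $\mu_2=\nu\max(1,d)=\max(\alpha,d\,\beta)\max(d,1/d)$; splitting into the four cases $d\gtrless 1$, $\alpha\gtrless d\,\beta$ in parallel with $(i)$--$(iv)$ of Proposition~\ref{prop: lower bed} (one finds $(\mu_1,\nu,\mu_2)=(\alpha,\alpha,\alpha d)$, $(d\beta,d\beta,d^{2}\beta)$ for $d\ge 1$, and $(\alpha,\alpha/d,\alpha/d)$, $(d\beta,\beta,\beta)$ for $d<1$), one verifies $\mu_2=\max[\alpha/d,\beta]\max[1,d^{2}]$, the asserted bound.

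The contradiction step mirrors Proposition~\ref{prop: lower bed}, but tracking crossings of the \emph{innermost} line $\alpha\,u+d\,\beta\,v=\mu_1$. Suppose $q(x)>\mu_2$ somewhere. Since $q(-\infty)=\alpha$ and $q(+\infty)=d\,\beta$ are both $\le\mu_1$, the maximum of $q$ is attained at some $z$ with $q(z)>\mu_2\ge\mu_1$ and $q'(z)=0$; as $q$ takes values $\le\mu_1$ at $(1,0)$, $(0,1)$, $(u^{\ast},v^{\ast})$ (using $u^{\ast}+v^{\ast}<1$), the point $(u(z),v(z))\in\bar{\mathcal R}$ is not a zero of $F$, so $F(u(z),v(z))<0$. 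Let $w_1<z<w_2$ be the first points at which $(u(x),v(x))$ meets $\alpha\,u+d\,\beta\,v=\mu_1$ as $x$ moves from $z$ toward $-\infty$, resp. $+\infty$ (should $q$ stay above $\mu_1$ on one side, integrate from the corresponding infinity, using $q'(\pm\infty)=0$ and $p(\pm\infty)\le\nu$). On $(w_1,z)$ one has $q>\mu_1$, hence $F<0$ along the orbit and $\int_{w_1}^{z}F\,dx<0$; at the crossing $q'(w_1)\ge 0$; and since $q(z)>\mu_2$ gives $(u(z),v(z))\in Q^{\mu_2}\subseteq P^{\nu}$, while $q<\mu_1$ just left of $w_1$ forces $p<\nu$ there (contrapositive of $P^{\nu}\subseteq Q^{\mu_1}$), we get $p(z)\ge\nu\ge p(w_1)$. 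Integrating $q''+\theta\,p'+F\ge 0$ over $[w_1,z]$ when $\theta\le 0$, respectively over $[z,w_2]$ when $\theta\ge 0$, and using $q'(z)=0$, $-q'(w_1)\le 0$ (resp. $q'(w_2)\le 0$), $\theta\,(p(z)-p(w_1))\le 0$ (resp. $\theta\,(p(w_2)-p(z))\le 0$), and the strictly negative $F$-integral, produces a strictly negative left-hand side — a contradiction. Hence $q\le\mu_2$ on $\mathbb R$. The case $d=1$ (where $p\equiv q$, $\mu_1=\nu=\mu_2$) is handled directly at a maximum $z$ of $q$, as at the end of the proof of Proposition~\ref{prop: lower bed}: $q''(z)\le 0$, $q'(z)=0$ and $F(u(z),v(z))<0$ contradict $q''+\theta\,q'+F\ge 0$.

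The hardest part, as in Proposition~\ref{prop: lower bed}, is twofold: the bookkeeping that pins down $\mu_2$ in the four cases, and — more delicately — securing the sign of the cross term $p(z)-p(w_1)$. The latter is precisely the purpose of the middle line $\alpha\,u+\beta\,v=\nu$ together with the chain $Q^{\mu_2}\subseteq P^{\nu}\subseteq Q^{\mu_1}$: it forces the value of $p$ at a point beyond the outer $q$-level $\mu_2$ to dominate the value of $p$ at a point on the inner $q$-level $\mu_1$, which is what neutralizes the advection term $\theta\,p'$ irrespective of $\mathrm{sign}\,\theta$.
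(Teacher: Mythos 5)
Your proposal is correct and follows essentially the same route as the paper: you reverse the scalar inequality to $q''+\theta\,p'+F\ge 0$, build the same N-barrier (your $(\mu_1,\nu,\mu_2)$ coincide in all four cases with the paper's $(\lambda_2,\eta,\lambda_1)$ for Proposition~\ref{prop: upper bed}), and run the mirrored integration/contradiction argument at a maximum of $q$, which is precisely the ``similar'' argument the paper leaves to the reader. The only cosmetic differences are your corner/recession-cone derivation of the barrier levels and the parenthetical treatment of a missing crossing of the inner line; neither changes the method.
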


\begin{proof}
As in the proof of Proposition~\ref{prop: lower bed}, there are also four cases
and for each case, we can construct the N-barrier as shown
in Figures~2.\ref{fig: d>1_beta_d>alpha}, 2.\ref{fig: d>1_beta_d<alpha}, 2.\ref{fig: d<1_beta_d>alpha}, and 2.\ref{fig: d<1_beta_d<alpha}
and prove that $q(x)\leq \lambda_1$ for $x\in \mathbb{R}$:
\begin{itemize}
     \item If $d\geq1$,
\begin{itemize}
\item [$(i)$] when $\beta\,d\geq \alpha$, we take $(\lambda_1,\lambda_2,\eta):=(\beta\,d^2,\beta\,d,\beta\,d)$;

\item [$(ii)$] when $\beta\,d < \alpha$, $(\lambda_1,\lambda_2,\eta):=(\alpha\,d,\alpha,\alpha)$.
\end{itemize}
      \item If $d<1$,
\begin{itemize}
\item [$(iii)$] when $\beta\,d\geq \alpha$, $(\lambda_1,\lambda_2,\eta):=(\beta,\beta\,d,\beta)$;
\item [$(iv)$] when $\beta\,d < \alpha$, $(\lambda_1,\lambda_2,\eta):=(\dfrac{\alpha}{d},\alpha,\dfrac{\alpha}{d})$.
\end{itemize}


\end{itemize}
We note that cases $(i)$, $(ii)$, $(iii)$, and $(iv)$ corresponds to Figures~2.\ref{fig: d>1_beta_d>alpha}, 
2.\ref{fig: d>1_beta_d<alpha}, 2.\ref{fig: d<1_beta_d>alpha}, and 2.\ref{fig: d<1_beta_d<alpha}, respectively.
Combining the four cases above, it follows that
\begin{itemize}
  \item for $\beta\,d\geq \alpha$, $q(x)\leq \beta\max (1,d^2)$ for all $x\in \mathbb{R}$;
  \item for $\beta\,d<      \alpha$, $q(x)\leq \alpha\max (d,1/d)$ for all $x\in \mathbb{R}$,
\end{itemize}
which implies
$q(x)\leq \max[\frac{\alpha}{d},\beta]\,\max [1,d^2]$ for all $x\in \mathbb{R}$.
The rest part of the proof is similar to that of Proposition~\ref{prop: lower bed} 
and is hence omitted. 

\begin{figure}[ht!]
\centering
\mbox{
\subfigure[]{\includegraphics[width=0.53\textwidth]{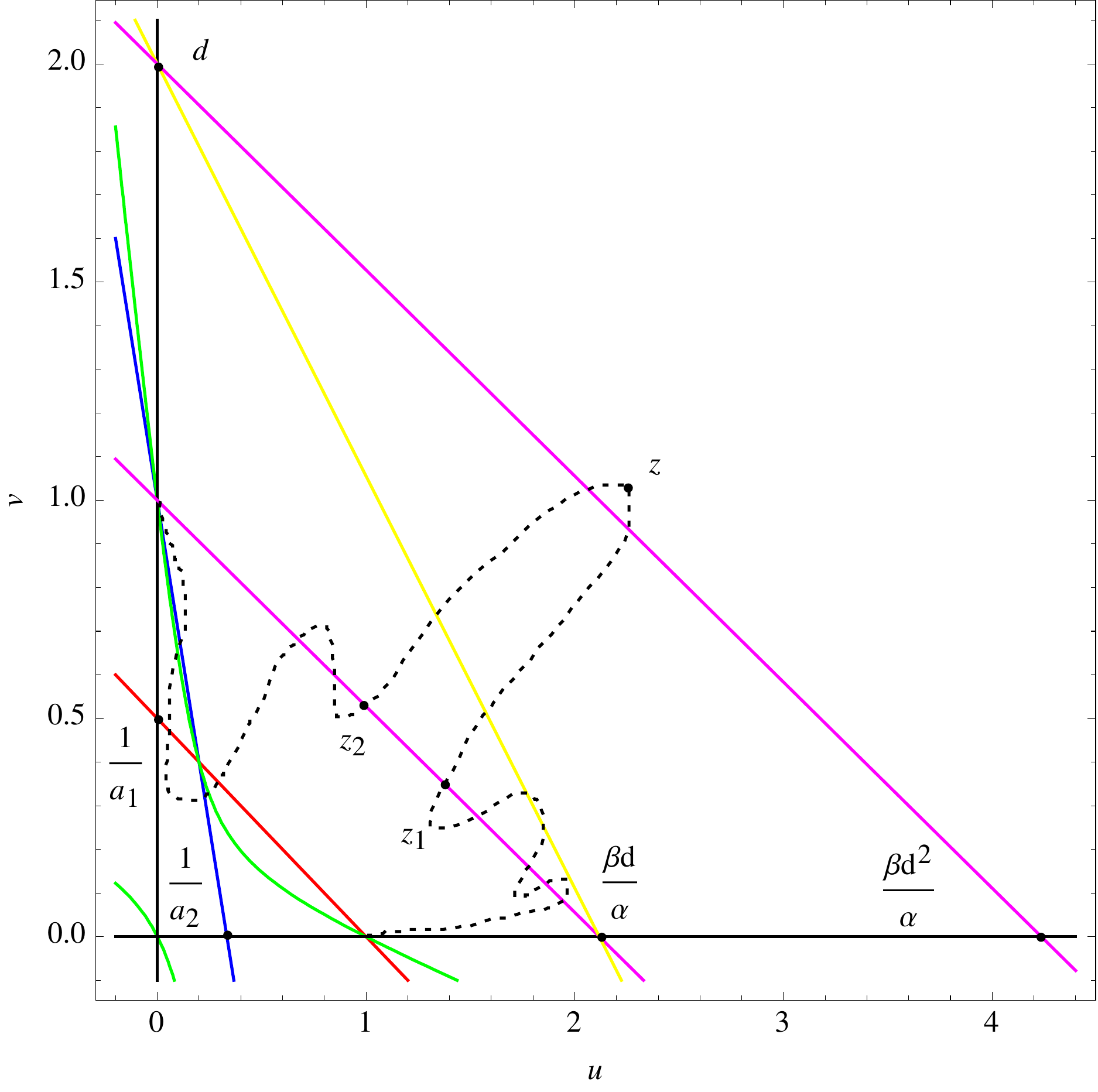}
             \label{fig: d>1_beta_d>alpha}    } \quad \hspace{0mm}
\subfigure[]{\includegraphics[width=0.53\textwidth]{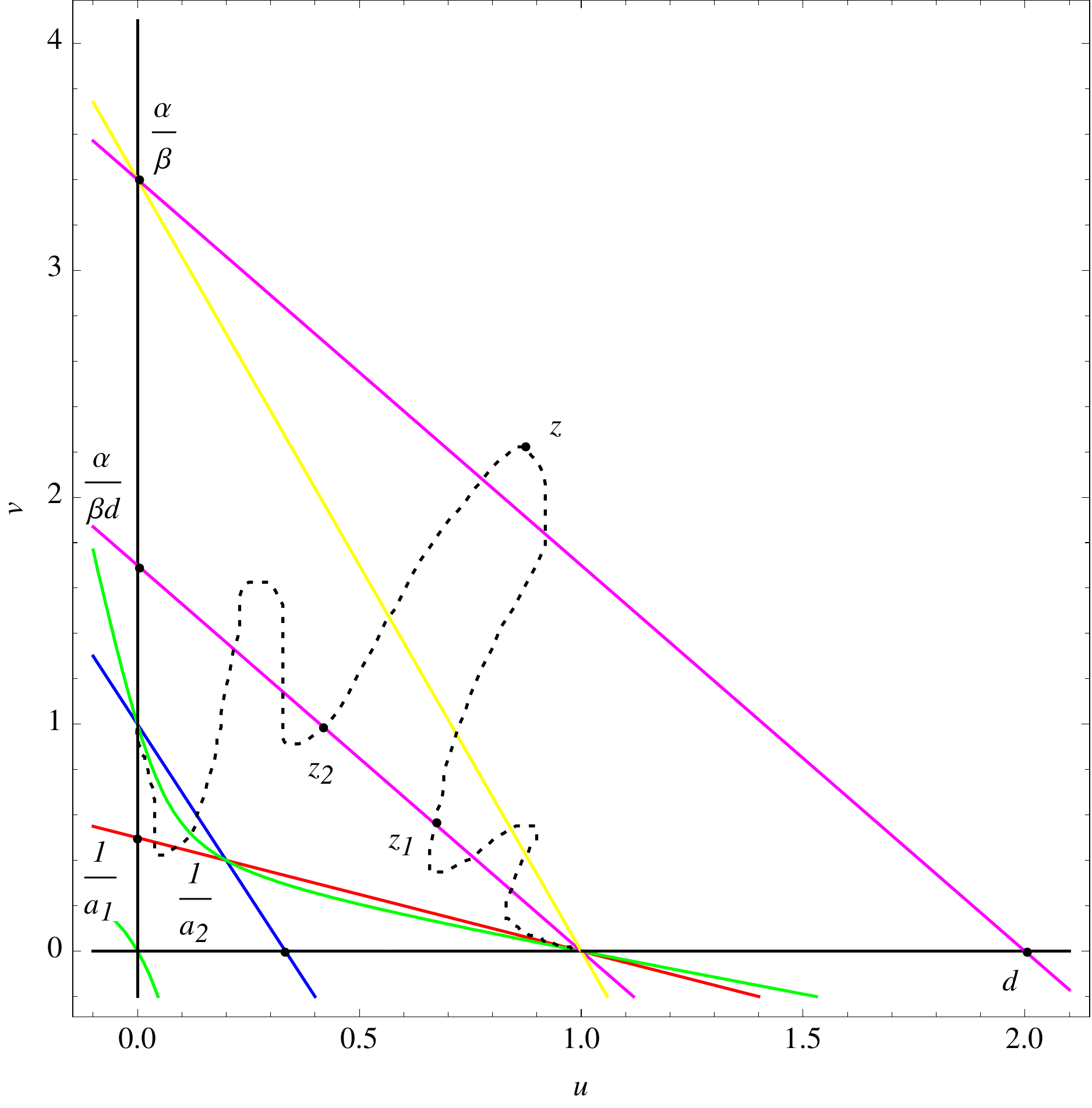}
             \label{fig: d>1_beta_d<alpha}    } \quad \hspace{0mm}
             }
\mbox{
\subfigure[]{\includegraphics[width=0.53\textwidth]{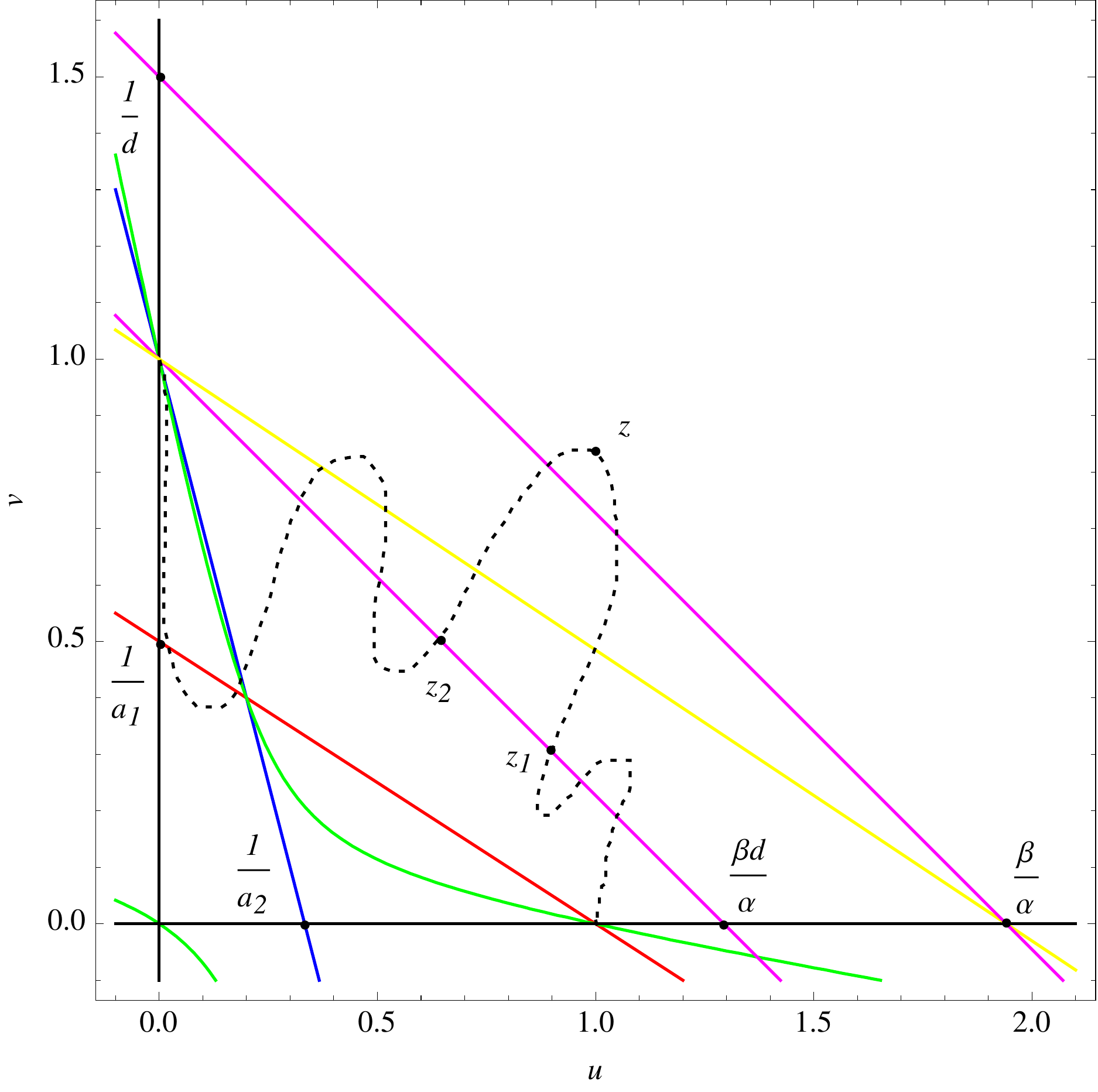}
             \label{fig: d<1_beta_d>alpha}    } \quad \hspace{0mm}
\subfigure[]{\includegraphics[width=0.53\textwidth]{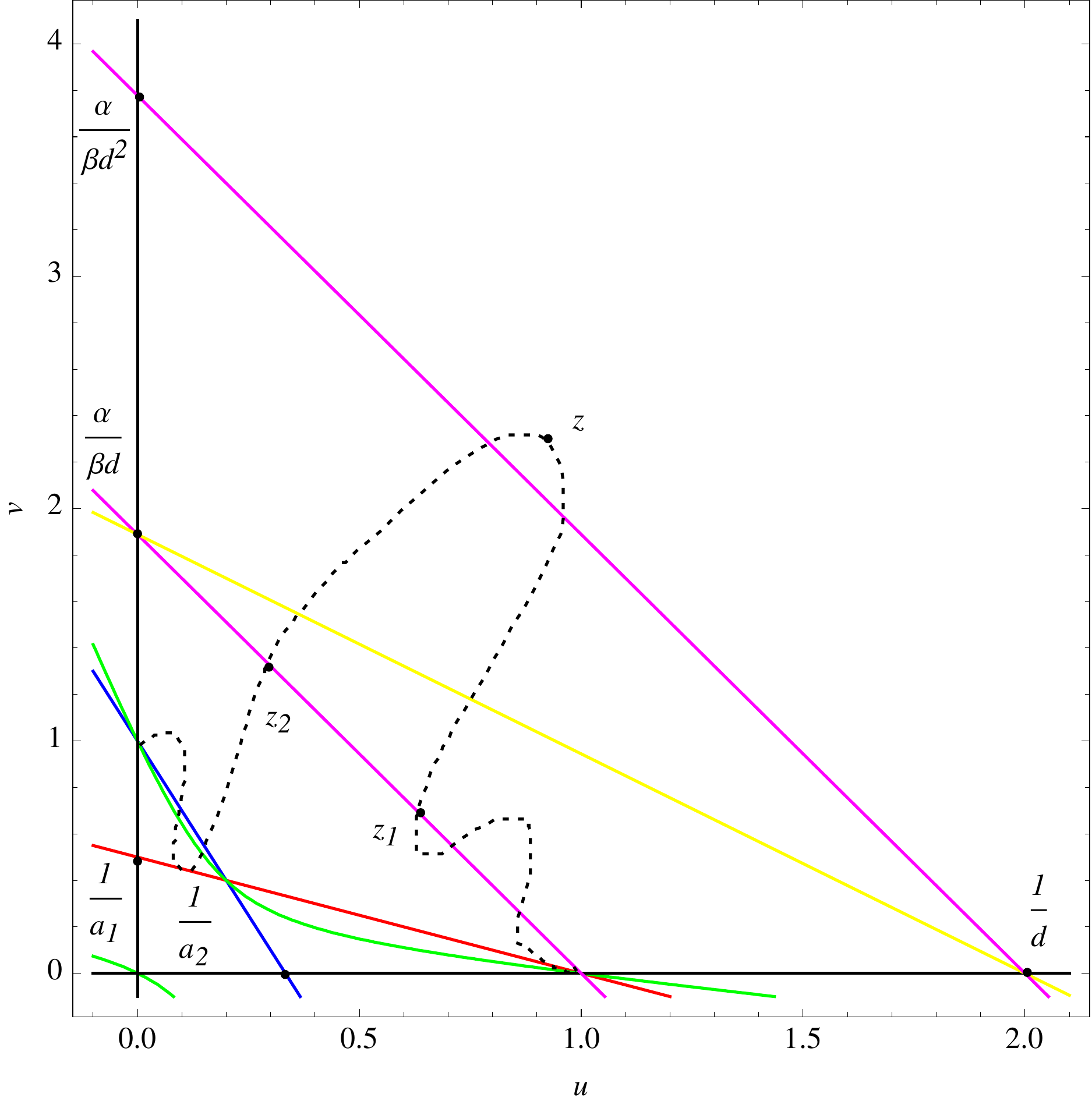}
             \label{fig: d<1_beta_d<alpha}    } \quad \hspace{0mm}
}
\caption{\small Red line: $1-u-a_1\,v=0$; blue line: $1-a_2\,u-v=0$; green curve: $\alpha\,u\,(1-u-a_1\,v)+\beta\,k\,v\,(1-a_2\,u-v)=0$; magenta line (below): $\alpha\,u+d\,\beta\,v=\lambda_2$; magenta line (above): $\alpha\,u+d\,\beta\,v=\lambda_1$; yellow line: $\alpha\,u+\beta\,v=\eta$; dashed curve: $(u(x),v(x))$.
\subref{fig: d>1_beta_d>alpha} $a_1=2$, $a_2=3$, $\alpha=17$, $\beta=18$, and $d=2$ give $\lambda_1=72$, $\lambda_2=36$, and $\eta=36$.
\subref{fig: d>1_beta_d<alpha} $a_1=2$, $a_2=3$, $\alpha=17$, $\beta=5$, and $d=2$ give $\lambda_1=34$, $\lambda_2=17$, and $\eta=17$.
\subref{fig: d<1_beta_d>alpha} $a_1=2$, $a_2=3$, $\alpha=17$, $\beta=33$, and $d=\frac{2}{3}$ give $\lambda_1=33$, $\lambda_2=22$, and $\eta=33$.
\subref{fig: d<1_beta_d<alpha} $a_1=2$, $a_2=3$, $\alpha=17$, $\beta=18$, and $d=\frac{1}{2}$ give $\lambda_1=34$, $\lambda_2=17$, and $\eta=34$.
\label{fig: 4 figures upper bound}}
\end{figure}

\end{proof}

\setcounter{equation}{0}
\setcounter{figure}{0}

\section{General maximum principle}\label{sec: General NBMP}
\vspace{2mm}

In this section, we prove Theorem \ref{thm: N-Barrier Maximum Principle for 2 Species}, which generalizes the maximum principle 
in Theorem~\ref{thm: lb<q(x)<ub} to elliptic systems with a wider class of nonlinear terms. 
Recall that $\bar{\mathcal{R}}=\big\{ (u,v)\;\big|\; \frac{u}{\bar{u}}+\frac{v}{\bar{v}}\ge 1,\; u,v\geq0 \big\}$ and
$\underaccent\bar{\mathcal{R}}=\big\{ (u,v)\;\big|\; \frac{u}{\underaccent\bar{u}}+\frac{v}{\underaccent\bar{v}}\le1,\;u,v\geq0\big\}$.

As in Section~\ref{sec: main result}, adding the two equations in \eqref{eqn: L-V BVP  before scaled-G} leads to an equation involving $p(x)=\alpha\,u(x)+\beta\,v(x)$ and $q(x)=d_1\,\alpha\,u(x)+d_2\,\beta\,v(x)$, i.e.
\begin{align}\label{eq: q''+p'+f+g=0 before scale}
  0&=\alpha\,\big(d_1\,u_{xx}+\theta\,u_{x}+u^m\,f(u,v)\big)+
      \beta\,\big(d_2\,v_{xx}+\theta\,v_{x}+v^n\,g(u,v)\big)\notag\\[3mm]
  &=q''(x)+\theta\,p'(x)+
  \alpha\,u^m\,f(u,v)+
    \beta\,v^n\,g(u,v)\notag\\[3mm]
  &=q''(x)+\theta\,p'(x)+F(u,v),
\end{align}
where $\alpha$, $\beta>0$ are \textit{arbitrary} constants and $F(u,v)=\alpha\,u^m\,f(u,v)+\beta\,v^n\,g(u,v)$.
By assumption $\mathbf{[A]}$, it readily follows that $F(u,v)\ge 0$ on $\underaccent\bar{\mathcal{R}}$ and $F(u,v)\le 0$ on $\bar{\mathcal{R}}$.
In Theorem \ref{thm: N-Barrier Maximum Principle for 2 Species},
when $\text{\bf e}_{-} = (0,0)$ or $\text{\bf e}_{+}= (0,0)$, the lower bound estimate
no longer holds but the upper bound estimate is still valid. In the following, we state a theorem, which is slightly more general
than Theorem \ref{thm: N-Barrier Maximum Principle for 2 Species}, to include
the upper bound estimate when $\text{\bf e}_{-} = (0,0)$ or $\text{\bf e}_{+}= (0,0)$.

\vspace{2mm}

\begin{thm}\label{thm: N-Barrier Maximum Principle for 2 Species_generalized}
Assume that $\mathbf{[A]}$ holds. If $a>0$, $b>0$, and $(u(x),v(x))$ is a nonnegative solution to \eqref{eqn: L-V BVP  before scaled-G}, then

\begin{equation}\label{eqn: upper and lower bounds of p_generalized}
\underaccent\bar{\lambda}
\leq a\, u(x)+b\, v(x) \leq
\bar{\lambda}, \quad x\in\mathbb{R},
\end{equation}
where
\begin{equation}
\bar{\lambda}=\max\big(a\,\bar{u},b\,\bar{v}\big)\,\frac{\max(d_1,d_2)}{\min(d_1,d_2)}
\end{equation}
and
\begin{equation}
\underaccent\bar{\lambda}=\min\big(a\,\underaccent\bar{u},b\,\underaccent\bar{v}\big)\,\frac{\displaystyle\min(d_1,d_2)}{\displaystyle\max(d_1,d_2)}\,\chi
.
\end{equation}
with $\chi$ defined by
\begin{equation}
\chi
=
\begin{cases}
\vspace{3mm}
0,
\quad \text{if} \quad  \text{\bf e}_{+}=(0,0) \quad \text{or} \quad \text{\bf e}_{-}=(0,0),\\
1,
\quad \text{otherwise}.
\end{cases}
\end{equation}

\end{thm}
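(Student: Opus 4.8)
The plan is to follow exactly the N-barrier scheme from the proof of Proposition~\ref{prop: lower bed} and Proposition~\ref{prop: upper bed}, but now in the geometry dictated by assumption $\mathbf{[A]}$ rather than the two specific lines $1-u-a_1v=0$ and $1-a_2u-v=0$. Adding the two equations of \eqref{eqn: L-V BVP  before scaled-G} gives the scalar identity $q''(x)+\theta\,p'(x)+F(u,v)=0$ with $p=\alpha u+\beta v$, $q=d_1\alpha u+d_2\beta v$, and $F=\alpha u^m f+\beta v^n g$; by $\mathbf{[A]}$ we have $F\ge 0$ on $\underaccent\bar{\mathcal{R}}$ and $F\le 0$ on $\bar{\mathcal{R}}$. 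The free parameters $\alpha,\beta>0$ are at our disposal, so given the target linear functional $au+bv$ we set $\alpha,\beta$ so that the \emph{middle} line of the barrier, $p=\eta$, has the direction $(a,b)$; i.e. $\alpha=a$, $\beta=b$. The outer two lines of the barrier are level sets of $q=d_1 a u + d_2 b v$.

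First I would prove the upper bound $\bar\lambda$. Assume for contradiction that $q(z)>\bar\lambda$ at some interior maximum point $z$ of $q$ (the asymptotic values $\text{\bf e}_{\pm}$ lie in $\mathcal C_{f,g}$, and one checks using $\mathbf{[A]}$ that any such point lies in $\bar{\mathcal R}\cup\underaccent\bar{\mathcal R}$, hence $q(\text{\bf e}_\pm)$ is below $\bar\lambda$, so a genuine interior maximum exceeding $\bar\lambda$ must occur). Choose $\lambda_2>\lambda_1$ and $\eta$ exactly as in the four cases of Proposition~\ref{prop: upper bed} — with $\bar u,\bar v$ playing the role formerly played by $1/a_2$ etc. — so that the ordered inclusion $Q_{\lambda_2}\subset P_\eta\subset Q_{\lambda_1}$ holds in the relevant half-plane and so that the strip between the two $q$-lines and beyond stays inside $\bar{\mathcal R}$, where $F\le 0$. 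Let $z_1,z_2$ be the first exit times of $(u(x),v(x))$ from the line $q=\lambda_2$ as $x$ decreases/increases from $z$. Depending on the sign of $\theta$, integrate $q''+\theta p'+F=0$ from $z_1$ to $z$ (if $\theta\ge 0$) or from $z$ to $z_2$ (if $\theta\le 0$); the boundary terms $q'(z)=0$, $q'(z_1)\ge0$ (resp. $q'(z_2)\le0$), the sign of $\theta(p(z)-p(z_1))$ coming from $z$ being on the far side of $P_\eta$, and $\int F\le 0$ over the portion of trajectory lying in $\bar{\mathcal R}$, all line up to contradict the integrated identity. This is the verbatim analogue of the argument already written out for $d>1$ in Proposition~\ref{prop: lower bed}, with inequalities reversed, so I would say ``the remaining details are identical to those in the proof of Proposition~\ref{prop: lower bed}'' and not reproduce them.

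For the lower bound, when $\chi=1$ (i.e. $\text{\bf e}_-\neq(0,0)$ and $\text{\bf e}_+\neq(0,0)$) the argument is the mirror image: assume $q(z)<\underaccent\bar\lambda$ at an interior minimum, build the N-barrier with lines $q=\lambda_1$, $p=\eta$, $q=\lambda_2$ nested inside $\underaccent\bar{\mathcal R}$ where $F\ge0$, and integrate to a contradiction exactly as in Proposition~\ref{prop: lower bed}; here the hypothesis $\text{\bf e}_\pm\neq(0,0)$ is what guarantees the trajectory actually crosses the innermost line $p=\eta$ and that the minimum is interior (if an endpoint were $(0,0)$ then $q$ could be arbitrarily small near $\pm\infty$ and no positive lower bound can hold — this is precisely why $\chi=0$ in that case, and then the stated lower bound $0$ is trivially true). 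When $\chi=0$ the lower bound reads $0\le au+bv$, which holds by nonnegativity of the solution, so nothing is to prove. Finally, reading off the four cases of the barrier construction: the upper bound collapses, after taking maxima over the cases $d_1\gtrless d_2$ and $\beta d_2\gtrless \alpha d_1$, to $\max(a\bar u,b\bar v)\,\max(d_1,d_2)/\min(d_1,d_2)$, and the lower bound symmetrically to $\min(a\underaccent\bar u,b\underaccent\bar v)\,\min(d_1,d_2)/\max(d_1,d_2)$, which is \eqref{eqn: upper and lower bounds of p_generalized}.

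The main obstacle is not any single computation but the bookkeeping: verifying that in \emph{each} of the four sign cases the three barrier lines admit a choice of $\lambda_1,\lambda_2,\eta$ with the correct strict nesting $Q\subset P\subset Q$ \emph{and} with the extremal line positioned so that the entire relevant arc of the trajectory stays in the region where $F$ has the right sign. Since $\mathbf{[A]}$ only asserts the sign of $F$ on the triangular regions $\bar{\mathcal R}$, $\underaccent\bar{\mathcal R}$ (not on a hyperbola as in Section~\ref{sec: main result}), one must be slightly careful that the $q$-level-set beyond $\lambda_2$ does not leave $\bar{\mathcal R}$ in the first quadrant; this is where the scaling factors $\min(d_1,d_2)/\max(d_1,d_2)$ enter and must be checked to be exactly what makes the nesting work. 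Once that geometric fact is in place for all four cases — which is the content of Lemma-type figures analogous to Figures~2.\ref{fig: d>1_beta_a_2d>alpha_a1}--2.\ref{fig: d<1_beta_a_2d<alpha_a1} — the integration-by-parts contradiction is mechanical and can be cited from Section~\ref{sec: main result}.
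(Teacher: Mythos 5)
Your overall scheme---add the two equations to get $q''+\theta\,p'+F=0$, build a nested N-barrier inside $\underaccent\bar{\mathcal{R}}$ (resp.\ outside $\bar{\mathcal{R}}$), and run the integrate-between-crossings contradiction of Section~2---is exactly the paper's, but your normalization of $\alpha,\beta$ is the wrong one, and because of it the constants you read off at the end do not follow. The N-barrier argument produces a pointwise bound on the functional defining the two \emph{outer} lines, namely $q(x)=d_1\alpha\,u(x)+d_2\beta\,v(x)$: it is $q$ whose interior extremum is taken at $z$, whose derivative vanishes there, and whose level sets $\lambda_1,\lambda_2$ the trajectory must cross; the middle line $p=\eta$ enters only through the sign of $\theta\,(p(z)-p(z_1))$. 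With your choice $\alpha=a$, $\beta=b$, the conclusion is therefore a bound on $d_1a\,u+d_2b\,v$, e.g.\ $d_1a\,u+d_2b\,v\ \ge\ \min\big(d_1a\,\underaccent\bar{u},d_2b\,\underaccent\bar{v}\big)\tfrac{\min(d_1,d_2)}{\max(d_1,d_2)}$, and converting this into a bound on $a\,u+b\,v$ costs a further factor $\tfrac{\min(d_1,d_2)}{\max(d_1,d_2)}$ (resp.\ its reciprocal for the upper bound). What you actually obtain is $\min\big(a\,\underaccent\bar{u},b\,\underaccent\bar{v}\big)\big(\tfrac{\min(d_1,d_2)}{\max(d_1,d_2)}\big)^2\le a\,u+b\,v\le \max\big(a\,\bar{u},b\,\bar{v}\big)\big(\tfrac{\max(d_1,d_2)}{\min(d_1,d_2)}\big)^2$, strictly weaker than \eqref{eqn: upper and lower bounds of p_generalized} whenever $d_1\neq d_2$; so the claim that the four cases ``collapse'' to $\max(a\,\bar u,b\,\bar v)\,\max(d_1,d_2)/\min(d_1,d_2)$ is not justified by your construction.

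The paper avoids this by taking $\alpha=a/d_1$, $\beta=b/d_2$, so that $q(x)=a\,u(x)+b\,v(x)$ is itself the target functional (the middle line then has direction $(a/d_1,b/d_2)$, not $(a,b)$). The optimal nested triangles give $\lambda_2=\min\big(a\,\underaccent\bar{u},b\,\underaccent\bar{v}\big)$, $\eta=\min\big(\tfrac1{d_1},\tfrac1{d_2}\big)\lambda_2$, $\lambda_1=\min\big(\tfrac{d_1}{d_2},\tfrac{d_2}{d_1}\big)\lambda_2=\underaccent\bar{\lambda}$, and the Section~2 argument (using $F\ge0$ on $\underaccent\bar{\mathcal{R}}$) yields $q\ge\underaccent\bar{\lambda}$ directly, with the upper bound symmetric; with that one change your outline matches the paper's proof, including the treatment of $\chi$ (only the trivial lower bound $a\,u+b\,v\ge0$ when $\text{\bf e}_{-}=(0,0)$ or $\text{\bf e}_{+}=(0,0)$, the crossings of the barrier being guaranteed otherwise). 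A minor further slip: your pairing of the sign of $\theta$ with the integration interval is reversed---as in Proposition~\ref{prop: lower bed}, $\theta\le0$ goes with integrating over $[z_1,z]$ and $\theta\ge0$ with $[z,z_2]$, and the same pairing is needed for the upper bound, since there $p(z)-p(z_1)>0$ while $q'(z_1)\ge0$.
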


\begin{proof}

Let $\alpha=a/d_1$ and $\beta=b/d_2$ in (\ref{eq: q''+p'+f+g=0 before scale}).
Then $q(x)=au(x)+bv(x)$ in (\ref{eq: q''+p'+f+g=0 before scale}). We employ \textit{the N-barrier method} developed in Section~\ref{sec: main result}
to show \eqref{eqn: upper and lower bounds of p_generalized}, which implies \eqref{eqn: upper and lower bounds of p}.

First, we assume $\text{\bf e}_{+}\neq (0,0)$ and $\text{\bf e}_{-}\neq (0,0)$. To construct an appropriate \textit{N-barrier} for the lower bound estimate, we consider $Q_{\lambda}=\{(u,v)\,|\,d_1\,\alpha\,u+d_2\,\beta\,v\le\lambda, u\ge 0, v\ge 0\}$ and
$P_{\eta}=\{(u,v)\,|\,\alpha\,u+\beta\,v\le\eta, u\ge 0, v\ge 0\}$, and chose $\lambda_1$, $\lambda_2$ and $\eta$ as large as possible such that
$Q_{\lambda_1}\subset P_{\eta}\subset Q_{\lambda_2}\subset \underaccent\bar{\mathcal{R}}$. By direct computation, $\lambda_1$, $\lambda_2$ and $\eta$
can be determined by
\begin{eqnarray}
\lambda_2 &=& \min\, (d_1\alpha\,\underaccent\bar{u}, d_2\alpha\,\underaccent\bar{v})\,=\,\min\big(a\underaccent\bar{u},b\underaccent\bar{v}\big), \\
\eta &=& \min\, (\frac1{d_1},\frac1{d_2})\,\lambda_2,\\
\lambda_1 &=& \min\, (d_1,d_2)\,\eta=\min\, (\frac{d_1}{d_2},\frac{d_2}{d_1})\,\lambda_2.
\end{eqnarray}
Since $F(u,v)\ge 0$ on $\underaccent\bar{\mathcal{R}}$, we can employ (\ref{eq: q''+p'+f+g=0 before scale}) and follow the arguments
in Section~\ref{sec: main result}
to obtain the lower bound estimate $q(x)\ge\lambda_1=\underaccent\bar{\lambda}$.

When the boundary conditions $\text{\bf e}_{+}=(0,0)$ or
$\text{\bf e}_{-}=(0,0)$, the argument in Section~\ref{sec: main result} can not be applied and only a trivial lower bound $a\,u(x)+b\,v(x)\geq0$ can be given.

The proof for the upper bound of $a\,u+b\,v$ is similar.
\end{proof}

\setcounter{equation}{0}
\setcounter{figure}{0}

\section{More delicate lower bound: tangent lines to quadratic curves}\label{sec: tangent lines}
\vspace{2mm}
In this section we provide an alternative approach to determine the line $\alpha\,u+d\,\beta\,v=\lambda_2$ in the proof of
Proposition~\ref{prop: lower bed} so that a bigger $\lambda_2$ can be chosen and a stronger lower bound for $q(x)$ can be given. To this end, we determine $\lambda_2$ by solving
\begin{subequations}\label{eqn: tangent line passing through (u,v,lambda2)}
\begin{equation}
\alpha\,u+d\,\beta\,v=\lambda_2,
\end{equation}
\begin{equation}
F(u,v)=0,
\end{equation}
\begin{equation}
\frac{F_u(u,v)}{\alpha}=\frac{F_v(u,v)}{d\,\beta},
\end{equation}
\end{subequations}
where $F_u(u,v)=\frac{\partial F}{\partial u}(u,v)$ and $F_v(u,v)=\frac{\partial F}{\partial v}(u,v)$.
In \eqref{eqn: tangent line passing through (u,v,lambda2)}, $\alpha\,u+d\,\beta\,v=\lambda_2$ is the tangent line to the quadratic curve
$F(u,v)=0$ and the line $\alpha\,u+d\,\beta\,v=\lambda_2$ is perpendicular to the vector $<F_u(u,v),F_v(u,v)>$. The solution $(u,v,\lambda_2)$ of \eqref{eqn: tangent line passing through (u,v,lambda2)} determines the point of tangency of the line $\alpha\,u+d\,\beta\,v=\lambda_2$ and the quadratic curve $F(u,v)=0$.

Equation \eqref{eqn: tangent line passing through (u,v,lambda2)} can be solved with the aid of Mathematica. It is easy to see that the first and third equations in \eqref{eqn: tangent line passing through (u,v,lambda2)} are linear, while the second one is quadratic. To solve \eqref{eqn: tangent line passing through (u,v,lambda2)}, we begin by solving the first and third equations to obtain $(u,v)=(u(\lambda_2),v(\lambda_2))$ as
\begin{subequations}\label{eqn: (u(lambda2),v(lambda2))}, namely
\begin{equation}
 u=\frac{-d \lambda_2 \left(\alpha a_1+a_2 \beta  k\right)+\alpha\beta  d (d-k)+2 \alpha  k \lambda_2}{2 \alpha  \left(-d \left(\alpha a_1+a_2 \beta  k\right)+\beta d^2+\alpha  k\right)},
\end{equation}
\begin{equation}
v=\frac{\alpha  a_1 \lambda_2 +\beta \left(a_2 k \lambda_2 +\alpha  d-2 d\lambda_2 -\alpha  k\right)}{2 \beta \left(\alpha  a_1 d+a_2 \beta  d k-\beta  d^2-\alpha  k\right)}.
\end{equation}
\end{subequations}
Substituting \eqref{eqn: (u(lambda2),v(lambda2))} into the second equation in \eqref{eqn: tangent line passing through (u,v,lambda2)} yields the following quadratic equation for $\lambda_2$:
\begin{equation}\label{eqn: quadratic equation for lambda2}
\mu_2\,\lambda_2^2+\mu_1\,\lambda_2+\mu_0=0,
\end{equation}
where
\begin{subequations}\label{eqn: 3 coeff in lambda2 eqn}
\begin{equation}
\mu_2=\frac{\left(\alpha  a_1+a_2 \beta k\right)^2-4 \alpha  \beta  k}{4 \alpha  \beta  \left(-d \left(\alpha a_1+a_2 \beta  k\right)+\beta d^2+\alpha  k\right)},
\end{equation}
\begin{equation}
\mu_1=\frac{-2 \alpha  \beta  \left((d+k)\left(\alpha  a_1+a_2 \beta k\right)-2 k (\alpha +\beta d)\right)}{4 \alpha  \beta  \left(-d \left(\alpha a_1+a_2 \beta  k\right)+\beta d^2+\alpha  k\right)},
\end{equation}
\begin{equation}
\mu_0=\frac{\alpha  \beta  (d-k)^2}{4\left(-d \left(\alpha a_1+a_2 \beta  k\right)+\beta d^2+\alpha  k\right)}.
\end{equation}
\end{subequations}
It follows from \eqref{eqn: quadratic equation for lambda2} that
\begin{equation}\label{eqn: lambda2 formula}
\lambda_2=\frac{-\mu _1\pm\sqrt{\mu _1^2-4 \mu _0\mu _2}}{2 \mu _2}.
\end{equation}
Using \eqref{eqn: 3 coeff in lambda2 eqn}, the discriminant $\mathcal{D}$ of \eqref{eqn: quadratic equation for lambda2} is given by
\begin{equation}
\mathcal{D}=\mu_1^2-4\mu_2\mu_0=\frac{k \left(-a_1 \alpha -a_2 \beta k+\alpha +\beta  k\right)}{-d\left(\alpha  a_1+a_2 \beta k\right)+\beta  d^2+\alpha  k}.
\end{equation}
To apply the approach proposed here, it is necessary that $\mathcal{D}\geq0$. In fact, $\mathcal{D}\neq0$ since $k \left(-a_1 \alpha -a_2 \beta k+\alpha +\beta  k\right)=k(\alpha(1-a_1)+k\beta(1-a_2))<0$. Moreover, it can be shown that $\mathcal{D}>0$ if and only if
\begin{equation}\label{eqn: condition for d}
\frac{\alpha  a_1+a_2 \beta  k-\sqrt{\left(\alpha a_1+a_2 \beta  k\right){}^2-4\alpha  \beta  k}}{2\beta }<d<\frac{\alpha  a_1+a_2 \beta  k+\sqrt{\left(\alpha a_1+a_2 \beta  k\right){}^2-4\alpha  \beta  k}}{2\beta }.
\end{equation}
Under the condition \eqref{eqn: condition for d}, $\mu_0<0$, $\mu_1>0$, and $\mu_2<0$ and hence the two roots $\lambda_2$ given by \eqref{eqn: lambda2 formula} are both positive. However, when
\begin{equation}
\lambda_2=\frac{-\mu _1+\sqrt{\mu _1^2-4 \mu _0\mu _2}}{2 \mu _2},
\end{equation}
it turns out that one of $u,v$ given by \eqref{eqn: (u(lambda2),v(lambda2))} is negative. We remark that this fact can also be easily seen from a property of the hyperbola $F(u,v)=0$. That is, for a given slope, there exist two tangent lines to the hyperbola $F(u,v)=0$: one has the intersection point in the first quadrant, while the other has the intersection point in the second or fourth quadrant. Therefore, we have
\begin{equation}\label{eqn: lambda2 Mathematica}
\lambda_2=\frac{-\mu _1-\sqrt{\mu _1^2-4 \mu _0\mu _2}}{2 \mu _2}
\end{equation}
and the intersection point $(u,v)$ can be expressed in terms of $\lambda_2$ using \eqref{eqn: (u(lambda2),v(lambda2))}. Now we are in a position to prove:

\vspace{2mm}

\begin{prop} [\textbf{Stronger lower bound for $q=q(x)$}]\label{prop: lower bed Mathematica}
Assume that $a_1>1$, $a_2>1$ and the condition \eqref{eqn: condition for d} holds. Let $\lambda_2$ be given by \eqref{eqn: lambda2 Mathematica}
and let $(u(x),v(x))$ be a pair of nonnegative $C^2$ functions satisfying the differential inequalities \eqref{eqn: L-V <0}. Then we have

\begin{itemize}
     \item when $d\geq1$,
     $q(x)\geq \,\dfrac{\lambda_2}{d}$ for all $x\in \mathbb{R}$;
     \item when $d<1 $,
     $q(x)\geq \,\lambda_2\,d$ for all $x\in \mathbb{R}$.
\end{itemize}
Consequently, for $d>0$ we have $q(x)\geq \lambda_2\,\min[d,1/d]$ for all $x\in \mathbb{R}$.
\end{prop}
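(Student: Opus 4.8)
The plan is to re-run the \textit{N-barrier method} from Section~\ref{sec: main result}, changing only the choice of the upper line of the barrier: instead of letting $\alpha\,u+d\,\beta\,v=\lambda_2$ pass through a vertex of $\underaccent\bar{\mathcal{R}}$, I would take $\lambda_2$ as in \eqref{eqn: lambda2 Mathematica}, so that this line is tangent to the quadratic curve $F(u,v)=0$ at the point $(u(\lambda_2),v(\lambda_2))$ of \eqref{eqn: (u(lambda2),v(lambda2))}, which lies in the open first quadrant under hypothesis \eqref{eqn: condition for d}. The reason this helps is that the N-barrier contradiction argument only requires the trajectory to stay in $A_+=\{(u,v)\,|\,F(u,v)\ge 0,\ u,v\ge 0\}$ (where the integral of $F$ along the solution is nonnegative), not in the smaller set $\underaccent\bar{\mathcal{R}}$; hence the largest admissible $\lambda_2$ is the one making $\alpha\,u+d\,\beta\,v=\lambda_2$ tangent to $\partial A_+$, which is precisely \eqref{eqn: lambda2 Mathematica}. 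All remaining pieces of the barrier and the contradiction step are then copied essentially verbatim from the proof of Proposition~\ref{prop: lower bed}.

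The one genuinely new point — and the step I expect to be the main obstacle — is to verify $Q_{\lambda_2}:=\{(u,v)\,|\,\alpha\,u+d\,\beta\,v\le\lambda_2,\ u,v\ge 0\}\subset A_+$. I would argue as follows. Parametrizing the tangent line by a single variable, $F$ restricted to it is a quadratic whose leading coefficient equals $-\alpha\,\beta\,\big(\beta\,d^2+\alpha\,k-d\,(\alpha\,a_1+a_2\,\beta\,k)\big)$; condition \eqref{eqn: condition for d} says exactly that $d$ lies strictly between the two roots of $\beta\,d^2-(\alpha\,a_1+a_2\,\beta\,k)\,d+\alpha\,k$, i.e. that $\beta\,d^2+\alpha\,k-d\,(\alpha\,a_1+a_2\,\beta\,k)<0$, so this leading coefficient is positive. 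Since tangency forces the restricted quadratic to have a double root at $(u(\lambda_2),v(\lambda_2))$, it is a positive multiple of a perfect square, hence $F\ge 0$ along the whole tangent line; in particular the $u$- and $v$-intercepts of the line satisfy $\lambda_2/\alpha\le 1$ and $\lambda_2/(d\,\beta)\le 1$. To pass from the line to the full triangle $Q_{\lambda_2}$, I would move along rays from the origin: for $(u_0,v_0)\in Q_{\lambda_2}$ with $u_0,v_0>0$, write $(u_0,v_0)=t_0\,(u_1,v_1)$ where $(u_1,v_1)$ is the exit point on the tangent line and $t_0\in(0,1]$; then $F(t\,u_1,t\,v_1)=t\,(A+B\,t)$ with $A=\alpha\,u_1+\beta\,k\,v_1>0$ and $B<0$ (the quadratic part of $F$ is negative on the first quadrant), so $F\ge 0$ at $t=1$ forces $F\ge 0$ for every $t\in[0,1]$, whence $F(u_0,v_0)\ge 0$. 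This gives $Q_{\lambda_2}\subset A_+$; since also $\underaccent\bar{\mathcal{R}}\subset A_+$, the value $\lambda_2$ in \eqref{eqn: lambda2 Mathematica} is at least as large as the one used in Proposition~\ref{prop: lower bed}, which is what makes the resulting lower bound sharper.

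With $Q_{\lambda_2}\subset A_+$ secured, I would close the N-barrier by setting $\eta:=\lambda_2\,\min[1,1/d]$ and $\lambda_1:=\lambda_2\,\min[d,1/d]$; comparing the intercepts on the two axes gives the nesting $Q_{\lambda_1}\subset P_{\eta}\subset Q_{\lambda_2}\subset A_+$, where $P_\eta=\{(u,v)\,|\,\alpha\,u+\beta\,v\le\eta,\ u,v\ge 0\}$. The contradiction argument of Proposition~\ref{prop: lower bed} then applies without change: assuming $q(z)<\lambda_1$ at a global minimum $z$ of $q$, integrate \eqref{eqn: ODE for p and q<} from the first crossing $z_1$ of the tangent line to $z$ when $\theta\le 0$ (respectively from $z$ to $z_2$ when $\theta\ge 0$), and combine $q'(z)=0$, $q'(z_1)\le 0$ (respectively $q'(z_2)\ge 0$), $p(z)<\eta<p(z_1)$ (respectively $p(z)<\eta<p(z_2)$), and $\int_{z_1}^{z}F(u,v)\,dx>0$ (the trajectory stays in $Q_{\lambda_2}\subset A_+$ and is not identically on $F=0$) to contradict the integrated inequality; the borderline case $d=1$ is dispatched as in Section~\ref{sec: main result} using $p=q$ and the sign of $p''$ at the minimum. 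This yields $q(x)\ge\lambda_1=\lambda_2\,\min[d,1/d]$ for all $x\in\mathbb{R}$, i.e. $q(x)\ge\lambda_2/d$ when $d\ge 1$ and $q(x)\ge\lambda_2\,d$ when $d<1$, as claimed.
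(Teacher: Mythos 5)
Your proposal is correct and follows essentially the same route as the paper: the N-barrier argument of Proposition~\ref{prop: lower bed} rerun with the tangent-line value of $\lambda_2$ from \eqref{eqn: lambda2 Mathematica} and the same choices $(\lambda_1,\eta)=\big(\tfrac{\lambda_2}{d},\tfrac{\lambda_2}{d}\big)$ for $d\ge1$ and $(\lambda_2\,d,\lambda_2)$ for $d<1$. Your explicit verification that $Q_{\lambda_2}\subset A_+$ (positivity of the leading coefficient of $F$ restricted to the tangent line under \eqref{eqn: condition for d}, plus the ray argument from the origin) is sound and simply fills in a step the paper leaves to the figures.
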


\begin{proof}
The proof is similar to that of Proposition~\ref{prop: lower bed} (see Figure~\ref{fig: d>1_Mathematica} and Figure~\ref{fig: d<1_Mathematica}).
Here we only explain how to determine $(\lambda_1,\eta)$ (see Figure~\ref{fig: d<1_Mathematica}).
When $d<1$, we choose $(\lambda_1,\eta)=(\lambda_2\,d,\lambda_2)$ via the following procedure.
The line $\alpha\,u+d\,\beta\,v=\lambda_2$ intersects the $u$-axis and $v$-axis at $(\frac{\lambda_2}{\alpha},0)$ and $(0,\frac{\lambda_2}{\beta d})$,
respectively. Since $d<1$, the line $\alpha\,u+\beta\,v=\eta$ passes through the point $(\frac{\lambda_2}{\alpha},0)$, and $\eta$ is determined
by $\eta=\alpha\,u+\beta\,v\Big|_{(u,v)=\left(\frac{\lambda_2}{\alpha},0\right)}=\lambda_2$. The line $\alpha\,u+\beta\,v=\eta$ intersects the
$v$-axis at $(0,\frac{\lambda_2}{\beta})$, so $\lambda_1$ is given by
$\lambda_1=\alpha\,u+d\,\beta\,v\Big|_{(u,v)=\left(0,\frac{\lambda_2}{\beta}\right)}=\lambda_2\,d$.
For the case $d\geq1$, $\lambda_1$ and $\eta$ can be determined in a similar manner as $(\lambda_1,\eta)=(\frac{\lambda_2}{d},\frac{\lambda_2}{d})$.
(Refer to Figure~\ref{fig: d>1_Mathematica}.)

\end{proof}


Combining Propositions~\ref{prop: upper bed} and \ref{prop: lower bed Mathematica}, we obtain

\vspace{2mm}

\begin{cor}[\textbf{Maximum principle for $q(x)$}]\label{cor: lb<q(x)<ub Mathematica}
Assume that $a_1>1$, $a_2>1$ and the condition \eqref{eqn: condition for d} holds. Let $\lambda_2$ be given by \eqref{eqn: lambda2 Mathematica}
and let $(u(x),v(x))$ be a nonnegative solution to the following differential equations and asymptotic conditions
\begin{equation}\label{eqn: L-V >0 optimal lower bound}
\begin{cases}
\vspace{3mm}
u_{xx}+\theta\,u_{x}+u\,(1-u-a_1\,v)=0, \quad x\in\mathbb{R}, \\
\vspace{3mm}
v_{xx}+\theta\,v_{x}+v\,(1-a_2\,u-v)=0, \quad x\in\mathbb{R},\\
(u,v)(-\infty)=(1,0),\quad (u,v)(+\infty)= (0,1).
\end{cases}
\end{equation}
Then for $x\in\mathbb{R}$, we have
\begin{equation}
\frac{\lambda_2}{d}\,\min[1,d^2]\leq q(x)\leq \max\Big[\frac{\alpha}{d},\beta\Big]\max[1,d^2].
\end{equation}
\end{cor}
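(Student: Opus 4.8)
The plan is to derive the corollary by merging the upper bound supplied by Proposition~\ref{prop: upper bed} with the sharpened lower bound supplied by Proposition~\ref{prop: lower bed Mathematica}, since both propositions apply to any pair $(u,v)$ that solves the boundary value problem \eqref{eqn: L-V >0 optimal lower bound}. First I would observe that a nonnegative solution $(u(x),v(x))$ of \eqref{eqn: L-V >0 optimal lower bound} is automatically $C^2$ (in fact smooth, by elliptic regularity for the scalar second-order ODEs), and that the equalities in \eqref{eqn: L-V >0 optimal lower bound} trivially imply both the differential inequalities \eqref{eqn: L-V <0} (with ``$\le 0$'') required by Proposition~\ref{prop: lower bed Mathematica} and the differential inequalities \eqref{eqn: L-V >0} (with ``$\ge 0$'') required by Proposition~\ref{prop: upper bed}, with the asymptotic conditions $(u,v)(-\infty)=\text{\bf e}_2$, $(u,v)(+\infty)=\text{\bf e}_3$ carried over verbatim.

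Next, since $a_1>1$ and $a_2>1$, Lemma~\ref{lem: bistable then hyperbola} guarantees that $F(u,v)=0$ is a hyperbola, and the standing hypothesis \eqref{eqn: condition for d} guarantees $\mathcal{D}>0$, so $\lambda_2$ in \eqref{eqn: lambda2 Mathematica} is a well-defined positive number whose associated tangency point $(u(\lambda_2),v(\lambda_2))$ computed from \eqref{eqn: (u(lambda2),v(lambda2))} lies in the open first quadrant; this is exactly what Proposition~\ref{prop: lower bed Mathematica} needs. Applying Proposition~\ref{prop: upper bed} then gives the upper bound $q(x)\le \max[\alpha/d,\beta]\,\max[1,d^2]$ for every $x\in\mathbb{R}$ directly, while applying Proposition~\ref{prop: lower bed Mathematica} gives $q(x)\ge \lambda_2/d$ when $d\ge 1$ and $q(x)\ge \lambda_2\,d$ when $d<1$, i.e. $q(x)\ge \lambda_2\,\min[d,1/d]$ for all $d>0$.

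Finally, I would record the elementary identity $\min[d,1/d]=\frac1d\,\min[1,d^2]$, which holds because for $d\ge 1$ both sides equal $1/d$ and for $d<1$ both sides equal $d$; substituting it into the lower bound just obtained yields $q(x)\ge \frac{\lambda_2}{d}\,\min[1,d^2]$, and together with the upper bound this is precisely the claimed two-sided estimate. I do not anticipate any genuine obstacle here, since the entire mathematical content of the corollary is already contained in the two propositions; the only thing to keep track of is the bookkeeping of the $d\gtrless 1$ cases (already handled inside each proposition) and the cosmetic rewriting of $\min[d,1/d]$ so that the lower and upper bounds are displayed in the matching normalized forms $(\cdot)\,\min[1,d^2]$ and $(\cdot)\,\max[1,d^2]$.
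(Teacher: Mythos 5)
Your proposal is correct and follows essentially the same route as the paper, which obtains the corollary precisely by combining the upper bound of Proposition~\ref{prop: upper bed} with the sharpened lower bound of Proposition~\ref{prop: lower bed Mathematica}; your extra bookkeeping (equalities imply both sets of differential inequalities, and $\min[d,1/d]=\tfrac{1}{d}\min[1,d^2]$) is exactly the implicit content of the paper's one-line argument. No gap to report.
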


\vspace{2mm}

Compared with Theorem~\ref{thm: lb<q(x)<ub}, Corollary~\ref{cor: lb<q(x)<ub Mathematica} asserts that when \eqref{eqn: condition for d} holds, 
a stronger lower bound for $q(x)$ can be given in terms of $\lambda_2$, which is defined by \eqref{eqn: lambda2 Mathematica}. In particular, when $\alpha=\beta=d=k=1$, we obtain Theorem~\ref{thm: sharper lower bound for u+v}.


\vspace{5mm}
To illustrate Proposition~\ref{prop: lower bed Mathematica}, we give an example. When $a_1=2$, $a_2=3$, $\alpha=17$, $\beta=18$, $d=2$, and $k=1$, \eqref{eqn: tangent line passing through (u,v,lambda2)} can be solved to give
\begin{equation}
(u,v,\lambda_2)=\bigg(\frac{3\left(2349\pm71\sqrt{4611}\right)}{47270},\frac{17 \left(1131\pm4\sqrt{4611}\right)}{141810},\frac{153\left(79\pm\sqrt{4611}\right)}{1630}\bigg),
\end{equation}
which is approximately $(u,v,\lambda_2)=(-0.157,0.103,1.0415)$ or $(0.455,0.168,13.789)$. We choose $(u,v,\lambda_2)=\left(\frac{3\left(2349+71\sqrt{4611}\right)}{47270},\frac{17 \left(1131+4\sqrt{4611}\right)}{141810},\frac{153\left(79+\sqrt{4611}\right)}{1630}\right)\approx (0.455,0.168,13.789)$, and determine
\begin{equation}\label{eqn: lambda and eta}
\lambda_1=\eta=\frac{\lambda_2}{d}
\end{equation}
by employing Proposition~\ref{prop: lower bed Mathematica}. Then we are led to Figure~\ref{fig: d>1_Mathematica}.
Applying Proposition~\ref{prop: lower bed Mathematica} again, it follows that $q(x)\geq \lambda_1\approx6.895$ for all $x\in \mathbb{R}$
(see Figure~\ref{fig: d>1_Mathematica}). This lower bound is much bigger compared with the one given previously in Section~\ref{sec: main result}, where the lower bound for $q(x)$ is $\frac{17}{6}\approx2.833$ (see Figure~2.\ref{fig: d>1_beta_a_2d>alpha_a1}).



\begin{figure}[h]
\center \scalebox{0.6}{\includegraphics{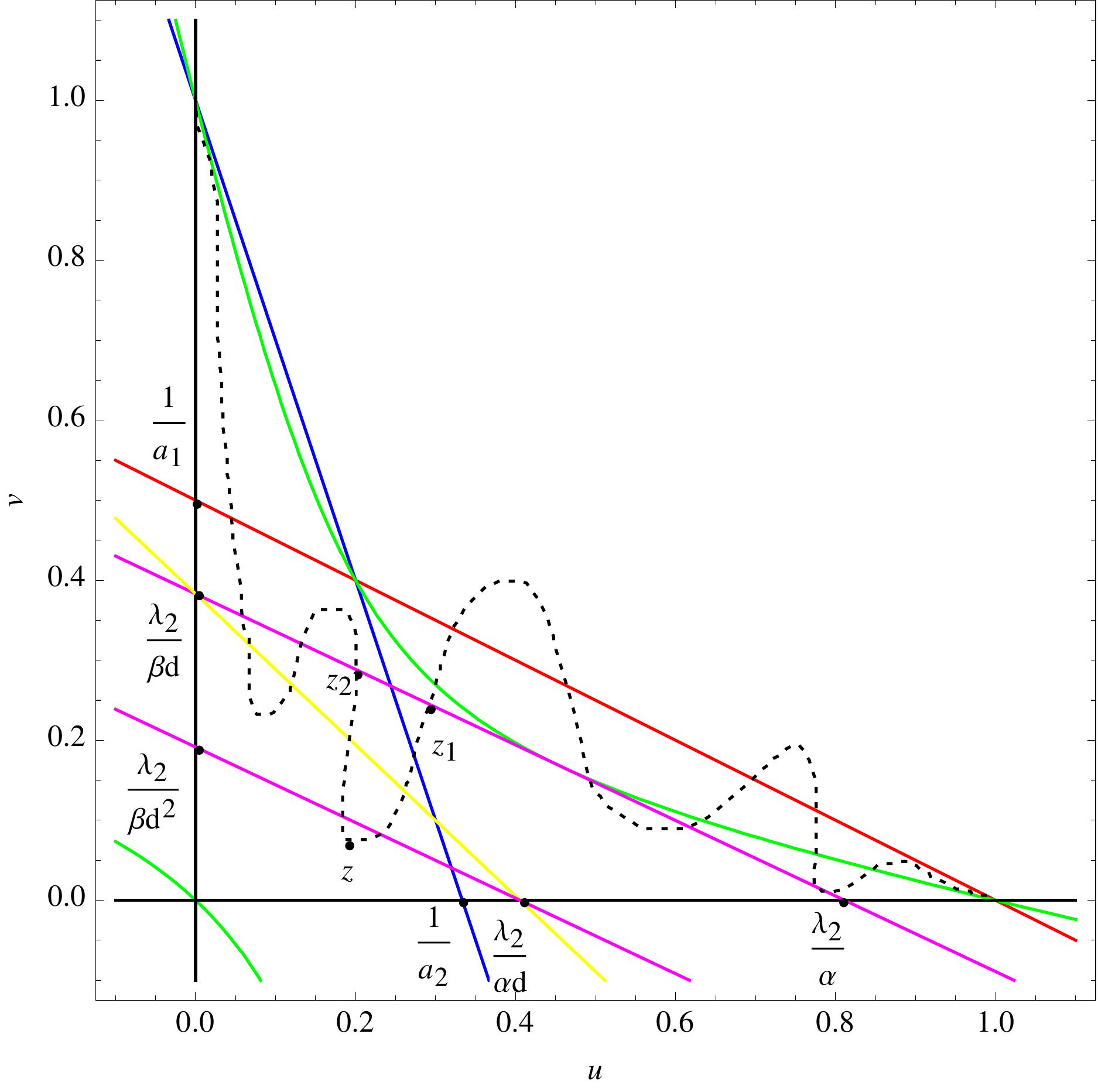}}
\caption{Red line: $1-u-a_1\,v=0$; blue line: $1-a_2\,u-v=0$; green curve: $\alpha\,u\,(1-u-a_1\,v)+\beta\,k\,v\,(1-a_2\,u-v)=0$; magenta line (above): $\alpha\,u+d\,\beta\,v=\lambda_2$; magenta line (below): $\alpha\,u+d\,\beta\,v=\lambda_1$; yellow line: $\alpha\,u+\beta\,v=\eta$; dashed curve: $(u(x),v(x))$. $a_1=2$, $a_2=3$, $\alpha=17$, $\beta=18$, $d=2$, $k=1$, and $\lambda_2=\frac{153\left(79+\sqrt{4611}\right)}{1630}\approx13.789$ give $\lambda_1=\eta=\frac{153\left(79+\sqrt{4611}\right)}{3260}\approx6.895$ according to \eqref{eqn: lambda and eta}
.}
\label{fig: d>1_Mathematica}
\end{figure}

\begin{figure}[h]
\center \scalebox{0.6}{\includegraphics{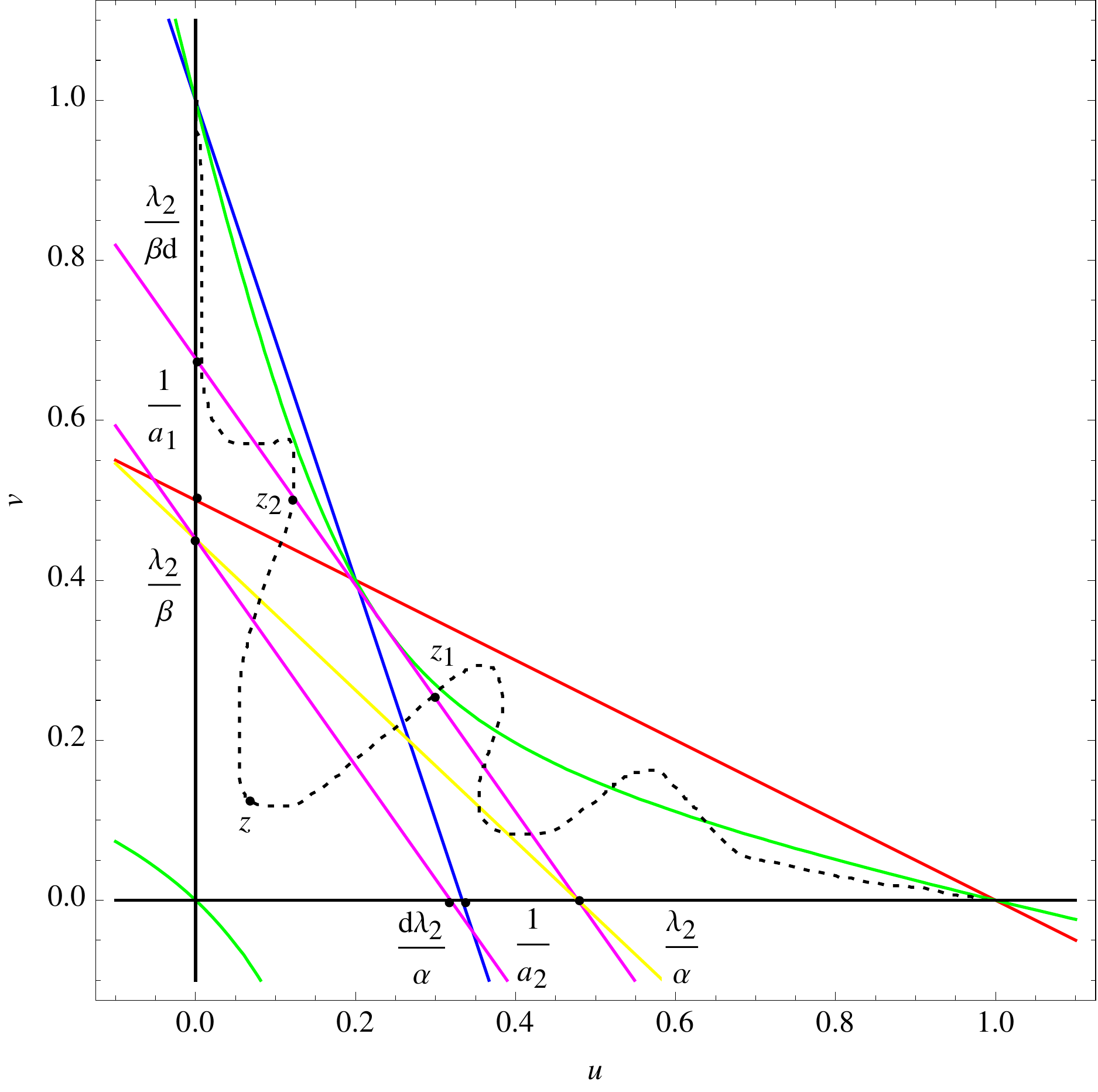}}
\caption{Red line: $1-u-a_1\,v=0$; blue line: $1-a_2\,u-v=0$; green curve: $\alpha\,u\,(1-u-a_1\,v)+\beta\,k\,v\,(1-a_2\,u-v)=0$; magenta line (upper): $\alpha\,u+d\,\beta\,v=\lambda_2$; magenta line (lower): $\alpha\,u+d\,\beta\,v=\lambda_1$; yellow line: $\alpha\,u+\beta\,v=\eta$; dashed curve: $(u(x),v(x))$. $a_1=2$, $a_2=3$, $\alpha=17$, $\beta=18$, $d=\frac{2}{3}$, $k=1$, and $\lambda_2=\frac{51\left(133+\sqrt{16059}\right)}{1630}\approx8.126$ give
$\lambda_1=\lambda_2\,d=\frac{17}{815}\left(133+\sqrt{16059}\right)\approx5.418$ and $\eta=\lambda_2=\frac{51\left(133+\sqrt{16059}\right)}{1630}\approx8.126$
.}
\label{fig: d<1_Mathematica}
\end{figure}




\vspace{2mm}
\setcounter{equation}{0}
\setcounter{figure}{0}
\section{Application to the nonexistence of three species travelling waves: proof of Theorem~\ref{thm: Nonexistence 3 species} }\label{sec: nonexistence}
\vspace{2mm}


In this section, we prove Theorem~\ref{thm: Nonexistence 3 species} by contradiction.
\begin{proof}[Proof of Theorem~\ref{thm: Nonexistence 3 species}]

Suppose to the contrary that there exists a solution $(u(x),v(x),w(x))$ to \eqref{eqn: L-V systems of three species (TWS)},\eqref{eqn: BC CHMU}.  
Due to the fact that $w(x)>0$ for $x\in\mathbb{R}$ and $w(\pm\infty)=0$, we can find $x_0\in\mathbb{R}$ 
such that $\max_{x\in\mathbb{R}} w(x)=w(x_0)>0$, $w''(x_0)\le 0$, and $w'(x_0)=0$. Since $w(x)$ satisfies $d_3\,w_{xx}+\theta \,w_x+w(\sigma_3-c_{31}\,u-c_{32}\,v-c_{33}\,w)=0$, we obtain
\begin{equation}\label{eqn: w(???) ???>0}
\sigma_3-c_{31}\,u(x_0)-c_{32}\,v(x_0)-c_{33}\,w(x_0)\ge 0,
\end{equation}
which gives
\begin{equation}
w(x)\leq w(x_0)\le \frac{1}{c_{33}}\big(\sigma_3-c_{31}\,u(x_0)-c_{32}\,v(x_0)\big)<\frac{\sigma_3}{c_{33}},\;x\in\mathbb{R}.
\end{equation}
As a consequence, we have
\begin{equation}\label{eqn: nonexistence diff ineq <0}
\begin{cases}
\vspace{3mm}
d_1\,u_{xx}+\theta \,u_x+u(\sigma_1-c_{13}\,\sigma_3\,c_{33}^{-1}-c_{11}\,u-c_{12}\,v)\leq0, \quad x\in\mathbb{R}, \\
d_2\,v_{xx}\hspace{0.8mm}+\theta \,v_x+v(\sigma_2-c_{23}\,\sigma_3\,c_{33}^{-1}-c_{21}\,u-c_{22}\,v)\leq0, \quad x\in\mathbb{R}.
\end{cases}
\end{equation}
Because of $\mathbf{[H1]}$ and $\mathbf{[H2]}$, we can apply Proposition~\ref{prop: lower bed, before scaling, appendix} from the Appendix to \eqref{eqn: nonexistence diff ineq <0}. Indeed, $\mathbf{[H1]}$ assures the positivity of $\sigma_1-c_{13}\,\sigma_3\,c_{33}^{-1}$ and $\sigma_2-c_{23}\,\sigma_3\,c_{33}^{-1}$, while the bistability condition $\mathbf{[BiS]}$ (see Appendix)  for the nonlinearity in \eqref{eqn: nonexistence diff ineq <0} follows from $\mathbf{[H2]}$. Consequently, we obtain a lower bound of $c_{31}\,u(x)+c_{32}\,v(x)$, i.e.
\begin{equation}
c_{31}\,u(x)+c_{32}\,v(x)\geq c_{33}^{-1}\,\min\bigg[\frac{\displaystyle c_{31}\,\phi_2}{\displaystyle c_{21}\,d_2},\frac{\displaystyle c_{32}\,\phi_1}{\displaystyle c_{12}\,d_1}\bigg]\,\min \big[d_1^2,d_2^2\big],\;x\in\mathbb{R}.
\end{equation}
The condition $\mathbf{[H3]}$ then yields
\begin{equation}
c_{31}\,u(x)+c_{32}\,v(x)\geq\sigma_3,\;x\in\mathbb{R},
\end{equation}
which contradicts \eqref{eqn: w(???) ???>0}. This completes the proof.

\end{proof}

\setcounter{equation}{0}
\setcounter{figure}{0}
\section{Appendix}\label{sec: appendix}
\vspace{2mm}

After suitable scaling, system \eqref{eqn: L-V BVP scaled} is equivalent to \eqref{eqn: L-V BVP before scaled}. Theorem \ref{thm: lb<q(x)<ub}
establishes lower-upper bound estimates for \eqref{eqn: L-V BVP scaled}. In this section, we state corresponding results
for \eqref{eqn: L-V BVP before scaled}. Throughout this section, we shall always assume the bistable condition:
\begin{itemize}
\item [$\mathbf{[BiS]}$] $\frac{\displaystyle\sigma_1}{\displaystyle c_{11}}>\frac{\displaystyle\sigma_2}{\displaystyle c_{21}}$, $\frac{\displaystyle\sigma_2}{\displaystyle c_{22}}>\frac{\displaystyle\sigma_1}{\displaystyle c_{12}}$,
\end{itemize}
which correponds to the condition $a_1>1$ and $a_2>1$ used in previous sections. Let $p(x)=\alpha\,u+\beta\,v$ and $q(x)=d_1\,\alpha\,u+d_2\,\beta\,v$. Then, from \eqref{eqn: L-V BVP  before scaled}, it follows that $p(x)$ and $q(x)$ satisfy
\begin{align}
  &\alpha\,\big(d_1\,u_{xx}+\theta\,u_{x}+u\,(\sigma_1-c_{11}\,u-c_{12}\,v)\big)+
      \beta\,\big(d_2\,v_{xx}+\theta\,v_{x}+v\,(\sigma_2-c_{21}\,u-c_{22}\,v)\big)\notag\\[3mm]
  &=q''(x)+\theta\,p'(x)+
  \alpha\,u\,(\sigma_1-c_{11}\,u-c_{12}\,v)+
    \beta\,v\,(\sigma_2-c_{21}\,u-c_{22}\,v).\notag\\
\end{align}
We can now apply the approach proposed in Section~\ref{sec: main result} to obtain lower and upper bounds for $q(x)$ in Proposition~\ref{prop: lower bed, before scaling, appendix} and Proposition~\ref{prop: upper bed, before scaling, appendix}, respectively.

\vspace{5mm}

\begin{prop} [\textbf{Lower bound for $q=q(x)$}]\label{prop: lower bed, before scaling, appendix}
Suppose that $(u(x),v(x))$ is $C^2$ and nonnegative, and satisfies the differential inequalities
\begin{equation}
\begin{cases}
\vspace{3mm}
d_1\,u_{xx}+\theta\,u_{x}+u\,(\sigma_1-c_{11}\,u-c_{12}\,v)\leq0, \quad x\in\mathbb{R}, \\
\vspace{3mm}
d_2\,v_{xx}\hspace{0.8mm}+\theta\,v_{x}+v\,(\sigma_2-c_{21}\,u-c_{22}\,v)\leq0,\quad x\in\mathbb{R}, \\
(u,v)(-\infty)=\big(\frac{\displaystyle\sigma_1}{\displaystyle c_{11}},0\big),\quad
(u,v)(+\infty)=\big(0,\frac{\displaystyle\sigma_2}{\displaystyle c_{22}}\big).
\end{cases}
\end{equation}
Then for $x\in\mathbb{R}$, we have
\begin{equation}
q(x)\geq \min\bigg[\frac{\alpha\,\sigma_2}{c_{21}\,d_2},\frac{\beta\,\sigma_1}{c_{12}\,d_1}\bigg]\,\min \big[d_1^2,d_2^2\big].
\end{equation}
\end{prop}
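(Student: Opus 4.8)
The plan is to reduce this Proposition to its already-established scaled counterpart, Proposition~\ref{prop: lower bed}, via an explicit change of variables, rather than rerunning the N-barrier construction in the unscaled coordinates. First I would set $\hat u(z)=\frac{c_{11}}{\sigma_1}\,u(x)$ and $\hat v(z)=\frac{c_{22}}{\sigma_2}\,v(x)$ with $z=\sqrt{\sigma_1/d_1}\;x$. Dividing the first differential inequality by $\sigma_1^2/c_{11}$ and the second by $\sigma_1\sigma_2/c_{22}$ (both positive, so the inequality signs are preserved), a direct computation shows that $(\hat u,\hat v)$ satisfies a system of the form \eqref{eqn: L-V <0} with
\[
d=\frac{d_2}{d_1},\qquad k=\frac{\sigma_2}{\sigma_1},\qquad a_1=\frac{c_{12}\sigma_2}{c_{22}\sigma_1},\qquad a_2=\frac{c_{21}\sigma_1}{c_{11}\sigma_2},\qquad \theta\ \text{replaced by}\ \hat\theta=\frac{\theta}{\sqrt{\sigma_1 d_1}}.
\]
The bistability hypothesis $\mathbf{[BiS]}$ is precisely $a_1>1$ and $a_2>1$; the boundary data $(\sigma_1/c_{11},0)$ and $(0,\sigma_2/c_{22})$ transform into $\text{\bf e}_2=(1,0)$ and $\text{\bf e}_3=(0,1)$; and $\hat u,\hat v$ remain $C^2$ and nonnegative. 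The precise value of $\hat\theta$, and of $k$, is immaterial, since the lower bound in Proposition~\ref{prop: lower bed} involves neither $\theta$ nor $k$.

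Next I would invoke Proposition~\ref{prop: lower bed} for $(\hat u,\hat v)$ with the weights $\hat\alpha=\alpha\sigma_1/c_{11}$ and $\hat\beta=\beta\sigma_2/c_{22}$, where $\alpha,\beta>0$ are the given constants appearing in $q(x)=d_1\alpha u+d_2\beta v$. This yields $\hat q(z):=\hat\alpha\,\hat u+d\,\hat\beta\,\hat v\ \ge\ \min\!\big[\frac{\hat\alpha}{a_2 d},\frac{\hat\beta}{a_1}\big]\min[1,d^2]$ for all $z\in\mathbb R$. The only bookkeeping is the identity $q(x)=d_1\,\hat q(z)$: indeed $d_1\hat q=d_1\hat\alpha\hat u+d_2\hat\beta\hat v$, while $\hat\alpha\hat u=\alpha u$ and $\hat\beta\hat v=\beta v$ by the choice of $\hat\alpha,\hat\beta$. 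Substituting the parameter values one finds $\hat\alpha/(a_2 d)=\alpha\sigma_2 d_1/(c_{21}d_2)$, $\hat\beta/a_1=\beta\sigma_1/c_{12}$, and $\min[1,d^2]=\min[d_1^2,d_2^2]/d_1^2$; hence
\[
q=d_1\hat q\ \ge\ d_1\cdot\min\!\Big[\frac{\alpha\sigma_2 d_1}{c_{21}d_2},\frac{\beta\sigma_1}{c_{12}}\Big]\cdot\frac{\min[d_1^2,d_2^2]}{d_1^2}=\min\!\Big[\frac{\alpha\sigma_2}{c_{21}d_2},\frac{\beta\sigma_1}{c_{12}d_1}\Big]\min[d_1^2,d_2^2],
\]
which is the asserted estimate.

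I expect no genuine obstacle here: the analytic content is entirely contained in Proposition~\ref{prop: lower bed}, and the remaining work is the routine verification that the scaling constants are chosen correctly and that the final algebraic simplification reproduces the stated constant. Alternatively — and this is presumably what the sentence preceding the statement has in mind — one can skip the scaling and repeat the N-barrier argument directly in the original variables: writing $p=\alpha u+\beta v$, $q=d_1\alpha u+d_2\beta v$, and $F(u,v)=\alpha u(\sigma_1-c_{11}u-c_{12}v)+\beta v(\sigma_2-c_{21}u-c_{22}v)$, one has $q''+\theta p'+F(u,v)\le 0$; under $\mathbf{[BiS]}$ the curve $F=0$ is a hyperbola (as in Lemma~\ref{lem: bistable then hyperbola}) and $F\ge 0$ on the triangle $\{\sigma_1-c_{11}u-c_{12}v\ge0,\ \sigma_2-c_{21}u-c_{22}v\ge0,\ u,v\ge0\}$; then one constructs the three lines $d_1\alpha u+d_2\beta v=\lambda_2$, $\alpha u+\beta v=\eta$, $d_1\alpha u+d_2\beta v=\lambda_1$ through the appropriate axis-intercepts of that triangle (four cases, according to the signs of $d_1-d_2$ and of $\beta c_{21}\sigma_1 d_2-\alpha c_{12}\sigma_2 d_1$) and integrates the scalar inequality over the relevant interval exactly as in the proof of Proposition~\ref{prop: lower bed}. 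Either route produces the same constant, and the anticipated difficulty is purely the constant-chasing, not any new analytic idea.
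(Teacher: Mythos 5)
Your proposal is correct, and your primary route differs from the paper's. The paper does not rerun a proof in the appendix at all: it writes the unscaled scalar identity for $p=\alpha u+\beta v$, $q=d_1\alpha u+d_2\beta v$ and states that the N-barrier construction of Section~2 is to be applied verbatim in the original variables (this is exactly the ``alternative'' you sketch at the end, with $F(u,v)=\alpha u(\sigma_1-c_{11}u-c_{12}v)+\beta v(\sigma_2-c_{21}u-c_{22}v)$ and the four-case choice of $\lambda_1,\lambda_2,\eta$). Your main argument instead deduces the statement from the already-proved scaled Proposition on the lower bound by the change of variables $\hat u=\tfrac{c_{11}}{\sigma_1}u$, $\hat v=\tfrac{c_{22}}{\sigma_2}v$, $z=\sqrt{\sigma_1/d_1}\,x$, and I checked the bookkeeping: the induced parameters $d=d_2/d_1$, $k=\sigma_2/\sigma_1$, $a_1=\tfrac{c_{12}\sigma_2}{c_{22}\sigma_1}$, $a_2=\tfrac{c_{21}\sigma_1}{c_{11}\sigma_2}$, $\hat\theta=\theta/\sqrt{\sigma_1 d_1}$ are right; $\mathbf{[BiS]}$ (assumed throughout the appendix) is exactly $a_1,a_2>1$; the inequalities and boundary data transform as claimed; with $\hat\alpha=\alpha\sigma_1/c_{11}$, $\hat\beta=\beta\sigma_2/c_{22}$ one has $q=d_1\hat q$; and the algebra $\hat\alpha/(a_2d)=\alpha\sigma_2 d_1/(c_{21}d_2)$, $\hat\beta/a_1=\beta\sigma_1/c_{12}$, $\min[1,d^2]=\min[d_1^2,d_2^2]/d_1^2$ reproduces precisely the stated constant. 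The trade-off: your reduction avoids redoing the four-case barrier construction, makes transparent why the unscaled bound is independent of $\theta$ and $k$ (neither enters the scaled estimate), and doubles as a consistency check that the appendix constants are the correct unscaled counterparts of Theorem~\ref{thm: lb<q(x)<ub}; the paper's direct route keeps the appendix statement self-contained in the original parameters without introducing a change of variables. Either way the resulting estimate is identical, so there is no gap.
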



\vspace{5mm}

\begin{prop} [\textbf{Upper bound for $q=q(x)$}]\label{prop: upper bed, before scaling, appendix}
Suppose that $(u(x),v(x))$ is $C^2$ and nonnegative, and satisfies the following differential inequalities and asymptotic conditions
\begin{equation}
\begin{cases}
\vspace{3mm}
d_1\,u_{xx}+\theta\,u_{x}+u\,(\sigma_1-c_{11}\,u-c_{12}\,v)\geq0, \quad x\in\mathbb{R}, \\
\vspace{3mm}
d_2\,v_{xx}\hspace{0.8mm}+\theta\,v_{x}+v\,(\sigma_2-c_{21}\,u-c_{22}\,v)\geq0,\quad x\in\mathbb{R}, \\
(u,v)(-\infty)=\big(\frac{\displaystyle\sigma_1}{\displaystyle c_{11}},0\big),\quad
(u,v)(+\infty)=\big(0,\frac{\displaystyle\sigma_2}{\displaystyle c_{22}}\big).
\end{cases}
\end{equation}
Then for $x\in\mathbb{R}$, we have
\begin{equation}
q(x)\leq \max\bigg[\frac{\alpha\,\sigma_1}{c_{11}\,d_2},\frac{\beta\,\sigma_2}{c_{22}\,d_1}\bigg]\,\max \big[d_1^2,d_2^2\big].
\end{equation}

\end{prop}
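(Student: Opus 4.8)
The plan is to run the $N$-barrier method of Section~\ref{sec: main result} directly on the unscaled equation, with every inequality sign reversed relative to Proposition~\ref{prop: lower bed, before scaling, appendix}. Multiplying the first differential inequality by $\alpha>0$, the second by $\beta>0$ and adding gives
\[
q''(x)+\theta\,p'(x)+F(u(x),v(x))\ge 0,\qquad x\in\mathbb R,
\]
where $p=\alpha u+\beta v$, $q=d_1\alpha u+d_2\beta v$ and $F(u,v)=\alpha u(\sigma_1-c_{11}u-c_{12}v)+\beta v(\sigma_2-c_{21}u-c_{22}v)$. Since $\mathbf{[BiS]}$ forces $c_{21}\sigma_1>c_{11}\sigma_2$ and $c_{12}\sigma_2>c_{22}\sigma_1$, their product gives $c_{12}c_{21}>c_{11}c_{22}$, whence the conic discriminant satisfies $(\alpha c_{12}+\beta c_{21})^2-4\alpha\beta c_{11}c_{22}\ge 4\alpha\beta c_{12}c_{21}-4\alpha\beta c_{11}c_{22}>0$; thus $F=0$ is a hyperbola, exactly as in Lemma~\ref{lem: bistable then hyperbola}. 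Moreover $F(u,v)\le 0$ whenever $\sigma_1-c_{11}u-c_{12}v\le 0$ and $\sigma_2-c_{21}u-c_{22}v\le 0$, so the set $A_-=\{(u,v):F(u,v)\le 0,\ u,v\ge 0\}$ contains the convex region $\mathcal W$ lying above both nullcline lines, and by the hyperbola structure $A_-$ is the unbounded region of the first quadrant bounded by one branch of $F=0$ and the two coordinate axes, with $\mathcal W\subset A_-$.

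For the upper estimate I would construct the $N$-barrier from three lines $d_1\alpha u+d_2\beta v=\lambda_1$, $\alpha u+\beta v=\eta$, $d_1\alpha u+d_2\beta v=\lambda_2$ with $\lambda_1\ge\lambda_2$ chosen as small as possible so that
\[
Q^{\lambda_1}\subset P^{\eta}\subset Q^{\lambda_2}\subset\mathcal W\subset A_-,
\]
where $Q^{\lambda}=\{d_1\alpha u+d_2\beta v\ge\lambda,\ u,v\ge 0\}$ and $P^{\eta}=\{\alpha u+\beta v\ge\eta,\ u,v\ge 0\}$. Here $\mathcal W$ meets the $u$-axis at $u=\sigma_1/c_{11}$ (the larger of the two $u$-intercepts, by $\mathbf{[BiS]}$) and the $v$-axis at $v=\sigma_2/c_{22}$ (the larger of the two $v$-intercepts), and one carries out the same four-case construction as in Proposition~\ref{prop: upper bed} — according to whether $d_1\ge d_2$ or $d_1<d_2$ and to which of these two intercepts is binding — starting the inner line $q=\lambda_2$ at the governing intercept, sliding to the $p$-line, and sliding out to the line $q=\lambda_1$. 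Carrying through the bookkeeping of intercepts, as in the scaled case, produces $\lambda_1=\max\!\big[\tfrac{\alpha\,\sigma_1}{c_{11}\,d_2},\tfrac{\beta\,\sigma_2}{c_{22}\,d_1}\big]\max[d_1^2,d_2^2]$ (the subcase $d_1=d_2$ being handled, as in Section~\ref{sec: main result}, by letting the three lines coincide).

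The conclusion then follows by the contradiction argument of Proposition~\ref{prop: lower bed, before scaling, appendix} with maxima and minima interchanged. If $q(z_0)>\lambda_1$ for some $z_0$, the asymptotic conditions force $q$ to attain an interior maximum at some $z$ with $q(z)>\lambda_1$ and $q'(z)=0$; let $z_1<z<z_2$ be the first points at which the orbit $(u(x),v(x))$ meets the line $q=\lambda_2$ on either side of $z$, so that $q>\lambda_2$ on $(z_1,z_2)$ and the arc over $[z_1,z_2]$ lies in $Q^{\lambda_2}\subset\mathcal W\subset A_-$. Integrating the displayed inequality over the appropriate one-sided interval (the choice, as in the proof of Proposition~\ref{prop: lower bed}, depending on the sign of $\theta$), and using $q'(z)=0$, the sign of $q'$ at the exit point, the sign of $p-\eta$ read off from the middle line of the barrier, and $\int F(u,v)\,dx<0$ (since $F\le 0$ on that arc, strictly so on a set of positive measure because the arc enters $\{q>\lambda_2\}$), produces a contradiction, establishing $q(x)\le\lambda_1$ for all $x$.

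I do not expect any conceptual obstacle: the argument reproduces Section~\ref{sec: main result} with signs flipped, and the only genuinely new work is the second paragraph's four-case positioning of the $N$-barrier lines inside the unscaled region $\mathcal W$ together with the corresponding intercept computations. Alternatively, one can apply the affine rescaling that transforms the system into \eqref{eqn: L-V BVP scaled}, invoke Proposition~\ref{prop: upper bed}, and undo the rescaling; there the only delicate point is checking that the differential inequalities and the particular linear combination $q=d_1\alpha u+d_2\beta v$ are compatible with the rescaling, after which $\max[\alpha_{\mathrm{sc}}/d,\beta_{\mathrm{sc}}]\max[1,d^2]$ unwinds to the claimed bound.
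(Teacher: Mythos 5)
Your proposal is correct and follows essentially the same route as the paper, which proves this appendix proposition simply by re-running the $N$-barrier construction of Section~2 (equivalently Proposition~2.3) on the unscaled system, with the barrier now nested in the region above both nullclines whose axis intercepts are $\sigma_1/c_{11}$ and $\sigma_2/c_{22}$ by $\mathbf{[BiS]}$. Your intercept bookkeeping does yield $\lambda_1=\frac{\max(d_1,d_2)}{\min(d_1,d_2)}\max\big[\tfrac{d_1\alpha\sigma_1}{c_{11}},\tfrac{d_2\beta\sigma_2}{c_{22}}\big]$, which coincides with the stated bound $\max\big[\tfrac{\alpha\sigma_1}{c_{11}d_2},\tfrac{\beta\sigma_2}{c_{22}d_1}\big]\max[d_1^2,d_2^2]$, and your sign-reversed contradiction argument matches the paper's.
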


\vspace{2mm}

\begin{thm}[\textbf{Maximum principle for $q(x)$}]\label{thm: lb<q(x)<ub, before scaling, appendix}
Suppose that $(u(x),v(x))$ is a nonnegative solution to the differential equations
\begin{equation}
\begin{cases}
\vspace{3mm}
d_1\,u_{xx}+\theta\,u_{x}+u\,(\sigma_1-c_{11}\,u-c_{12}\,v)=0, \quad x\in\mathbb{R}, \\
\vspace{3mm}
d_2\,v_{xx}\hspace{0.8mm}+\theta\,v_{x}+v\,(\sigma_2-c_{21}\,u-c_{22}\,v)=0,\quad x\in\mathbb{R}, \\
(u,v)(-\infty)=\big(\frac{\displaystyle\sigma_1}{\displaystyle c_{11}},0\big),\quad
(u,v)(+\infty)=\big(0,\frac{\displaystyle\sigma_2}{\displaystyle c_{22}}\big).
\end{cases}
\end{equation}
Then for $x\in\mathbb{R}$, we have
\begin{equation}\label{eqn: lower and upper bound in Appendix}
\min\bigg[\frac{\alpha\,\sigma_2}{c_{21}\,d_2},\frac{\beta\,\sigma_1}{c_{12}\,d_1}\bigg]\,\min \big[d_1^2,d_2^2\big]\leq q(x)\leq \max\bigg[\frac{\alpha\,\sigma_1}{c_{11}\,d_2},\frac{\beta\,\sigma_2}{c_{22}\,d_1}\bigg]\,\max \big[d_1^2,d_2^2\big].
\end{equation}
\end{thm}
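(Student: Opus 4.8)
The plan is to read Theorem~\ref{thm: lb<q(x)<ub, before scaling, appendix} off the two one-sided estimates that immediately precede it. A solution $(u(x),v(x))$ of the elliptic system in the statement satisfies, trivially, \emph{both} systems of differential inequalities: the inequality system of Proposition~\ref{prop: lower bed, before scaling, appendix} (right-hand sides $\le 0$) and that of Proposition~\ref{prop: upper bed, before scaling, appendix} (right-hand sides $\ge 0$), with the same boundary data $(u,v)(-\infty)=(\sigma_1/c_{11},0)$, $(u,v)(+\infty)=(0,\sigma_2/c_{22})$. The bistable hypothesis $\mathbf{[BiS]}$ assumed throughout the Appendix is precisely what those propositions require (it is the unscaled form of $a_1>1$, $a_2>1$). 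Hence Proposition~\ref{prop: lower bed, before scaling, appendix} supplies the left-hand inequality of \eqref{eqn: lower and upper bound in Appendix} and Proposition~\ref{prop: upper bed, before scaling, appendix} the right-hand one, and concatenating the two chains gives the claim for all $x\in\mathbb R$.

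It therefore remains to indicate how the two propositions themselves are obtained, since the theorem is only as strong as they are. One transplants the N-barrier method of Section~\ref{sec: main result} from the scaled system \eqref{eqn: L-V BVP scaled} to \eqref{eqn: L-V BVP  before scaled}: set $p(x)=\alpha u(x)+\beta v(x)$ and $q(x)=d_1\alpha u(x)+d_2\beta v(x)$ with $\alpha,\beta>0$ arbitrary, and add $\alpha$ times the first equation to $\beta$ times the second to obtain the scalar relation $q''+\theta p'+F(u,v)=0$ — an inequality $\le 0$ or $\ge 0$ under the respective hypotheses — where $F(u,v)=\alpha u(\sigma_1-c_{11}u-c_{12}v)+\beta v(\sigma_2-c_{21}u-c_{22}v)$. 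Arguing as in Lemma~\ref{lem: bistable then hyperbola}, under $\mathbf{[BiS]}$ the curve $F=0$ is a hyperbola through $(0,0)$, $(\sigma_1/c_{11},0)$, $(0,\sigma_2/c_{22})$ and the coexistence equilibrium, so $F\ge 0$ on the inner triangle $\underaccent\bar{\mathcal{R}}=\{\sigma_1-c_{11}u-c_{12}v\ge 0,\ \sigma_2-c_{21}u-c_{22}v\ge 0,\ u,v\ge 0\}$ and $F\le 0$ on the complementary outer region. For the lower bound one threads three lines $q=\lambda_2$, $p=\eta$, $q=\lambda_1$ with $Q_{\lambda_1}\subset P_\eta\subset Q_{\lambda_2}\subset\underaccent\bar{\mathcal{R}}$, taking $\lambda_1,\lambda_2,\eta$ as large as the geometry allows; the bookkeeping splits into the four regimes $d_1\gtrless d_2$ combined with which weighted intercept of $F=0$ is binding, and yields $\lambda_1=\min[\alpha\sigma_2/(c_{21}d_2),\ \beta\sigma_1/(c_{12}d_1)]\min[d_1^2,d_2^2]$. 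Supposing for contradiction $q(z)<\lambda_1$ at an interior minimum, integrate $q''+\theta p'+F\le 0$ between the first crossings $z_1<z<z_2$ of the line $q=\lambda_2$: the boundary terms have favourable signs ($q'(z)=0$, $q'(z_1)\le 0\le q'(z_2)$, $p(z)<\eta<p(z_1),p(z_2)$, so the drift term integrated on the side dictated by the sign of $\theta$ is nonnegative), while $\int F>0$ because the relevant arc of $(u,v)$ stays in $\underaccent\bar{\mathcal{R}}$ — a contradiction. The upper bound is the mirror argument on the outer region, where $F\le 0$. Alternatively one may simply rescale \eqref{eqn: L-V BVP  before scaled} to \eqref{eqn: L-V BVP scaled}, quote Theorem~\ref{thm: lb<q(x)<ub} with $\alpha,\beta$ adjusted by the scaling factors, and translate the bounds back.

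The only genuinely delicate point is the one already handled in Section~\ref{sec: main result}: choosing $\lambda_1,\lambda_2,\eta$ so that the nesting $Q_{\lambda_1}\subset P_\eta\subset Q_{\lambda_2}\subset\underaccent\bar{\mathcal{R}}$ holds in \emph{every} parameter regime, and confirming that the sign of $\int F$ along the solution arc comes out right — which rests on the hyperbola being the boundary of $\{F\ge 0\}\cap\{u,v\ge 0\}$ and on the arc between two consecutive crossings of $q=\lambda_2$ lying inside $\underaccent\bar{\mathcal{R}}$. The remaining sign bookkeeping on $q'$, $p$ and the integrated drift term, together with the degenerate subcase $d_1=d_2$ (where $p$ and $q$ coincide up to a constant), is routine and formally identical to the scaled argument.
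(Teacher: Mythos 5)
Your proposal is correct and follows the paper's own route: the theorem is obtained exactly by combining Proposition~\ref{prop: lower bed, before scaling, appendix} and Proposition~\ref{prop: upper bed, before scaling, appendix}, which in turn are proved by transplanting the N-barrier argument of Section~\ref{sec: main result} (or equivalently by rescaling to Theorem~\ref{thm: lb<q(x)<ub}). Your additional sketch of the barrier construction and the sign bookkeeping matches the paper's method and fills in nothing essentially different.
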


\begin{proof}
Combining Proposition~\ref{prop: lower bed, before scaling, appendix} and Proposition~\ref{prop: upper bed, before scaling, appendix}, Theorem~\ref{thm: lb<q(x)<ub, before scaling, appendix} is established.
\end{proof}

We note in particular that, when $\alpha=d_1^{-1}$ and $\beta=d_2^{-1}$, \eqref{eqn: lower and upper bound in Appendix} becomes
\begin{equation}
\min\bigg[\frac{\sigma_2}{c_{21}},\frac{\sigma_1}{c_{12}}\bigg]\,\min \bigg[\frac{d_1}{d_2},\frac{d_2}{d_1}\bigg]\leq u(x)+v(x)\leq \max\bigg[\frac{\sigma_1}{c_{11}},\frac{\sigma_2}{c_{22}}\bigg]\,\max \bigg[\frac{d_1}{d_2},\frac{d_2}{d_1}\bigg].
\end{equation}
The above result can be generalised by letting $\alpha=d_1^{-1}\,r_1$ and $\beta=d_2^{-1}\,r_2$. Equation \eqref{eqn: lower and upper bound in Appendix} then leads to
\begin{eqnarray}
\min\bigg[\dfrac{r_1\sigma_2}{c_{21}},\dfrac{r_2\sigma_1}{c_{12}}\bigg]\min \bigg[\dfrac{d_1}{d_2},\dfrac{d_2}{d_1}\bigg]&\leq& r_1u(x)+r_2v(x)\nonumber\\
&\leq&
\max\bigg[\dfrac{r_1\sigma_1}{c_{11}},\dfrac{r_2\sigma_2}{c_{22}}\bigg]\max \bigg[\dfrac{d_1}{d_2},\dfrac{d_2}{d_1}\bigg],
\end{eqnarray}
where $r_1,r_2>0$ are arbitrary constants.


\vspace{2mm}

\textbf{Acknowledgments.} The authors wish to express sincere gratitude to
 Dr. Tom Mollee for his careful reading of the manuscript and valuable suggestions and comments
to improve the readability and accuracy of the paper. CC Chen are grateful for the
support by the grant 102-2115-M-002-011-MY3 of Ministry of Science and Technology, Taiwan. The research of L.-C. Hung is partly supported by the grant 104EFA0101550 of Ministry of Science and Technology, Taiwan.




\vspace{2mm}











\begin{thebibliography}{99}

\bibitem{Chen&Hung15Nonexistence}
{\sc C.-C. Chen and L.-C. Hung}, {\em Nonexistence of traveling wave solutions,
  exact and semi-exact traveling wave solutions for diffusive lotka-volterra
  systems of three competing species}, Communications on Pure and Applied
  Analysis, To appear.

\bibitem{CHMU}
{\sc C.-C. Chen, L.-C. Hung, M.~Mimura, and D.~Ueyama}, {\em Exact travelling
  wave solutions of three-species competition-diffusion systems}, Discrete
  Contin. Dyn. Syst. Ser. B, 17 (2012), pp.~2653--2669.

\bibitem{demottoni79}
{\sc P.~de~Mottoni}, {\em {Qualitative analysis for some quasilinear parabolic
  systems}}, Institute of Math., Polish Academy Sci., zam, 11 (1979), p.~190.

\bibitem{Fei&Carr03}
{\sc N.~Fei and J.~Carr}, {\em Existence of travelling waves with their minimal
  speed for a diffusing {L}otka-{V}olterra system}, Nonlinear Anal. Real World
  Appl., 4 (2003), pp.~503--524.

\bibitem{hung2012JJIAM}
{\sc L.-C. Hung}, {\em Exact traveling wave solutions for diffusive
  {L}otka-{V}olterra systems of two competing species}, Jpn. J. Ind. Appl.
  Math., 29 (2012), pp.~237--251.

\bibitem{Kan-on95}
{\sc Y.~Kan-on}, {\em Parameter dependence of propagation speed of travelling
  waves for competition-diffusion equations}, SIAM J. Math. Anal., 26 (1995),
  pp.~340--363.

\bibitem{Kan-on97Fisher-Monostable}
\leavevmode\vrule height 2pt depth -1.6pt width 23pt, {\em Fisher wave fronts
  for the {L}otka-{V}olterra competition model with diffusion}, Nonlinear
  Anal., 28 (1997), pp.~145--164.

\bibitem{Leung&Feng11}
{\sc A.~W. Leung, X.~Hou, and W.~Feng}, {\em Traveling wave solutions for
  {L}otka-{V}olterra system re-visited}, Discrete Contin. Dyn. Syst. Ser. B, 15
  (2011), pp.~171--196.

\bibitem{Leung08}
{\sc A.~W. Leung, X.~Hou, and Y.~Li}, {\em Exclusive traveling waves for
  competitive reaction-diffusion systems and their stabilities}, J. Math. Anal.
  Appl., 338 (2008), pp.~902--924.

\bibitem{Rodrigo&Mimura00}
{\sc M.~Rodrigo and M.~Mimura}, {\em Exact solutions of a competition-diffusion
  system}, Hiroshima Math. J., 30 (2000), pp.~257--270.

\bibitem{Rodrigo&Mimura01}
{\sc M.~Rodrigo and M.~Mimura}, {\em Exact solutions of reaction-diffusion
  systems and nonlinear wave equations}, Japan J. Indust. Appl. Math., 18
  (2001), pp.~657--696.

\bibitem{Volperts94TWS-Parabolic}
{\sc A.~I. Volpert, V.~A. Volpert, and V.~A. Volpert}, {\em Traveling wave
  solutions of parabolic systems}, vol.~140 of Translations of Mathematical
  Monographs, American Mathematical Society, Providence, RI, 1994.
\newblock Translated from the Russian manuscript by James F. Heyda.

\end{thebibliography}

\end{document}